\documentclass[10pt]{article}
\usepackage{fancyhdr,a4wide}
\usepackage[mathscr]{eucal}
\usepackage{amsmath}
\usepackage{amssymb}
\usepackage{amsthm}
\usepackage{float}
\usepackage{caption}
\usepackage[caption = false]{subfig}
\usepackage{enumerate}
\usepackage{fullpage}
\usepackage{graphicx}
\usepackage{multicol}
\usepackage{tikz,  mathtools, soul, mathdots, yhmath}
\usetikzlibrary{calc,decorations.markings,arrows}

\usepackage{hyperref}

\setlength{\headheight}{15.2pt}
\setlength{\textwidth}{15truecm}
\setlength{\textheight}{24truecm}
\setlength{\oddsidemargin}{10mm}
\setlength{\evensidemargin}{13mm}
\setlength{\topmargin}{1mm}

\hoffset=-4mm
\voffset=-20mm
\parindent=0mm
\parskip=2mm

\input xy
\xyoption{all}

\usepackage[all]{xy}

\usepackage{color}

\title{Construction of Rank $2$ Indecomposable Modules in Grassmannian Cluster Categories} 
\author{Karin Baur, Dusko Bogdanic, and Jian-Rong Li}
\date{} 

\usepackage{pst-all}  

\definecolor{light-grey}{gray}{0.6}  

\begin{document}

\newtheorem{lm}{Lemma}[section]
\newtheorem{prop}[lm]{Proposition}
\newtheorem{satz}[lm]{Satz}

\newtheorem{corollary}[lm]{Corollary}
\newtheorem{cor}[lm]{Korollar}
\newtheorem{claim}[lm]{Claim}
\newtheorem{theorem}[lm]{Theorem}
\newtheorem*{thm}{Theorem}

\theoremstyle{definition}
\newtheorem{defn}[lm]{Definition}
\newtheorem{qu}{Question}
\newtheorem{ex}[lm]{Example}
\newtheorem{exas}[lm]{Examples}
\newtheorem{exc}[lm]{Exercise}
\newtheorem*{facts}{Facts}
\newtheorem{rem}[lm]{Remark}

\theoremstyle{remark}

\newcommand{\perm}{\operatorname{Perm}\nolimits}
\newcommand{\NN}{\operatorname{\mathbb{N}}\nolimits}
\newcommand{\ZZ}{\operatorname{\mathbb{Z}}\nolimits}
\newcommand{\QQ}{\operatorname{\mathbb{Q}}\nolimits}
\newcommand{\RR}{\operatorname{\mathbb{R}}\nolimits}
\newcommand{\CC}{\operatorname{\mathbb{C}}\nolimits}
\newcommand{\PP}{\operatorname{\mathbb{P}}\nolimits}
\newcommand{\cA}{\operatorname{\mathcal{A}}\nolimits}
\newcommand{\LL}{\operatorname{\Lambda}\nolimits}

\newcommand{\MM}{\operatorname{\mathbb{M}}\nolimits}
\newcommand{\Fk}{\mathcal{F}_{k,n}}
\newcommand{\Mk}{M_{k,n}}

\newcommand{\ad}{\operatorname{ad}\nolimits}
\newcommand{\im}{\operatorname{im}\nolimits}
\newcommand{\Char}{\operatorname{char}\nolimits}
\newcommand{\Aut}{\operatorname{Aut}\nolimits}

\newcommand{\id}{\operatorname{id}\nolimits}
\newcommand{\Id}{\operatorname{Id}\nolimits}
\newcommand{\ord}{\operatorname{ord}\nolimits}
\newcommand{\ggT}{\operatorname{ggT}\nolimits}
\newcommand{\lcm}{\operatorname{lcm}\nolimits}

\newcommand{\Gr}{\operatorname{Gr}\nolimits}

\maketitle

\begin{abstract} 
The category ${\rm CM}(B_{k,n}) $ of Cohen-Macaulay modules over a quotient $B_{k,n}$ of a 
preprojective algebra provides a categorification of the cluster algebra structure on the 
coordinate ring of the Grassmannian variety of $k$-dimensional subspaces in $\mathbb C^n$, 
\cite{JKS16}. Among the indecomposable modules in this category are the rank $1$ modules 
which are in bijection with $k$-subsets of $\{1,2,\dots,n\}$, and their explicit construction has been given 
by Jensen, King and Su. These are the building blocks of the category as any module in ${\rm CM}(B_{k,n}) $ can be filtered by them. 
In this paper we give an explicit construction of rank 2 modules. With this, we give all 
indecomposable rank 2 modules in the cases when $k=3$ and $k=4$. In particular, we cover the tame cases and 
go beyond them. We also characterise the modules among them which are uniquely determined by 
their filtrations. 
For $k\ge 4$, we exhibit infinite families of non-isomorphic rank 2 modules having the same filtration. 

\end{abstract}

\noindent
%
%
\section{Introduction} 
%
One of the key initial examples of Fomin and Zelevinsky's theory of cluster algebras \cite[\S 12.2]{FZ} was the homogeneous coordinate ring $\CC[\Gr(2,n)]$ of the Grassmannian of $2$-dimensional subspaces of $\CC^n$. Scott proved in~\cite{Scott06} that this cluster structure can be generalized to the coordinate ring $\CC[\Gr(k,n)]$. This has sparked a lot of research activities in cluster theory, e.g.  \cite{SW, gls, HL10, gssv, MuS, BKM16, JKS16, mr,fraser, SSBW}. 

An aditive categorification of the cluster algebra structure on the homogeneous coordinate ring 
of the Grassmannian variety of $k$-dimensional subspaces in $\mathbb C^n$ has been given by Geiss, Leclerc, and Schroer \cite{rigid, GLS08} in terms of a subcategory of the category of finite dimensional modules over the preprojective algebra of type $A_{n-1}$. Jensen, King, and Su \cite{JKS16} gave a new additive categorification of this cluster structure using the maximal Cohen-Macaulay modules over the completion of an algebra $B_{k,n}$ which is a quotient of the preprojective algebra of type $A_{n-1}$.  In the category ${\rm CM}(B_{k,n}) $ of Cohen-Macaulay modules over $B_{k,n}$, which is called the Grassmannian cluster category, among the indecomposable modules are the rank $1$ modules which are known to be in bijection with $k$-subsets of $\{1,2,\dots,n\}$, and their explicit construction has been given in \cite{JKS16}. For a given $k$-subset $I$, the corresponding rank 1 module is denoted by $L_I$. Also, we refer to $k$-subsets as rims, because of the way we use them to visualize rank 1 modules (see Section 2). Rank 1 modules are the building blocks of the category as any module in ${\rm CM}(B_{k,n}) $ can be filtered by rank $1$ modules (the filtration is noted in the profile of a module, \cite[Corollary 6.7]{JKS16}). The number of rank 1 modules appearing in the filtration of a given module is called the rank of that module. 

The aim of this paper is to explicitly construct rank 2 indecomposable Cohen-Macaulay $B_{k,n}$ modules in the cases when $k=3$ and $k=4$. In particular, we construct all indecomposable rank 2 modules in the tame cases $(3,9)$ and $(4,8)$, and more generally, for an arbitrary $k$, we construct all indecomposable modules of rank 2 whose rank 1 filtration layers $L_I$ and $L_J$ satisfy the condition $|I\cap J|\geq k-4$. 

Once we have the construction, 
we investigate the question of uniqueness. Here, the central notions are that of $r$-interlacing (Definition~\ref{interlacing}) and of the poset of a given rank 2 module (Section~\ref{sec:prelims}). 
If $I$ and $J$ are $k$-subsets of $\{1,\ldots, n\}$, then
$I$ and $J$ are said to be {\em $r$-interlacing} if there exist subsets 
$\{i_1,i_3,\dots,i_{2r-1}\}\subset I\setminus J$ and $\{i_2,i_4,\dots, i_{2r}\}\subset J\setminus I$ 
such that $i_1<i_2<i_3<\dots <i_{2r}<i_1$ (cyclically) 
and if there exist no larger subsets of $I$ and of $J$ with this property. 
The filtration layers of a module $M$ give a poset structure. In rank 2, if $I$ and $J$ are $r$-interlacing, the sets 
$I$ and $J$ form a number $r_1\le r$ of boxes in the so-called lattice diagram of $M$ (see Section~\ref{sec:prelims} for details 
on how we picture $M$ with its filtration layers). 
The associated poset is 
$1^{r_1}\mid 2$; the poset consists of a tree with one vertex of degree $r_1$ and $r_1$ leaves, it has  
dimension $1$ at the leaves and dimension 2 at the central vertex. See Figure~\ref{fig:boxes-poset}. 

\begin{figure}[H] 
\begin{center}
{\includegraphics[width = 8cm]{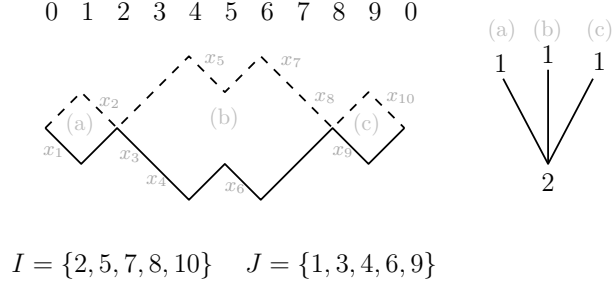}} 
\caption{The profile of  a  module  with $4$-interlacing layers forming 3 boxes  with poset $1^3\mid 2$. 
The dashed line shows the rim of $L_I$ with arrows $x_i$, $i\in I$ indicated. 
The solid line below is the rim of $L_J$, with arrows $x_i$, $i\in J$ indicated.} \label{fig:boxes-poset}
\end{center}
\end{figure}
A partial answer to the question of indecomposability of a rank 2 module in terms of its poset is given in the following proposition. 

\begin{prop}[\cite{BBGE}, Remark 3.2] \label{poset}
Let $M\in {\rm CM}(B_{k,n})$ be an indecomposable module with profile $I\mid J$. Then  $I$ and $J$ are  $r$-interlacing and their poset is $1^{r_1}\mid 2$, where $r\geq r_1\geq 3$.   
\end{prop}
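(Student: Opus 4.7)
The plan is to argue the contrapositive: if $I$ and $J$ are not $r$-interlacing for some $r\ge 3$, or if the poset of $M$ has fewer than three boxes, then $M$ is decomposable, contradicting the hypothesis. The main ingredients are the explicit Jensen-King-Su lattice description of the rank $1$ modules $L_I$, the fact that a rank $2$ module with profile $I\mid J$ fits into a short exact sequence $0\to L_I\to M\to L_J\to 0$, and the consequent identification of $M$ with a class in $\mathrm{Ext}^1_{B_{k,n}}(L_J,L_I)$.

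First I would treat the nested case, when $I$ and $J$ are $0$-interlacing. Using the explicit action of the arrows $x_i$, $y_i$ of $B_{k,n}$ on the lattice description of $L_I$ and $L_J$, one checks that no non-trivial extension can be built: the two rims are fully separable on each arc, so any cocycle defining an extension is forced to be a coboundary. Hence $\mathrm{Ext}^1_{B_{k,n}}(L_J,L_I)=0$ in this regime and $M\cong L_I\oplus L_J$, contradicting indecomposability. Next, for $r_1=1$ or $r_1=2$ boxes I would exhibit a genuine direct summand. A single box, or two disjoint boxes, can be ``unhooked'' from the rest of the diagram: the complementary region corresponds to a sub-lattice closed under all $x_i,y_i$, giving a rank $1$ submodule $L_K$ (for a $k$-subset $K$ read off from the diagram) together with an explicit splitting of the inclusion.

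For the positive part, once $r_1\ge 3$ is established, the shape $1^{r_1}\mid 2$ of the poset follows from the two-layer structure of $M$: every vertex of its poset has dimension at most $2$; dimension $2$ occurs precisely inside a box, where both $L_I$ and $L_J$ contribute to the local composition series, and dimension $1$ occurs outside. All $r_1$ boxes are attached to the common ``spine'' where the two rims coincide, which arranges them into a tree with one central vertex of degree $r_1$ and $r_1$ leaves. Finally $r\ge r_1$ is immediate from the definitions of $r$-interlacing and of the number of boxes, since each box is bounded by at least one interlacing pair but several interlacings may bound the same box.

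The principal obstacle will be making the cases $r_1\in\{1,2\}$ genuinely rigorous rather than merely combinatorially plausible: one must produce a non-trivial idempotent in $\mathrm{End}_{B_{k,n}}(M)$. My approach would be to work with generators of $M$ read off from the lattice diagram, write down the candidate projection onto the ``outer'' summand explicitly, and then verify commutation with every arrow $x_i$, $y_i$ of $B_{k,n}$. The $0$-interlacing step and the description of the poset shape are comparatively straightforward; the delicate analysis lives entirely in the ``too few boxes'' regime.
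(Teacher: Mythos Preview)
The paper does not prove this proposition at all; it is imported verbatim from \cite{BBGE}, Remark~3.2, and no argument is supplied here. So there is nothing in the present paper to compare your proposal against.

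On the content of your sketch: you have the short exact sequence the wrong way round. With the paper's conventions, profile $I\mid J$ means $L_J$ is the submodule and $L_I$ the quotient, so the sequence is $0\to L_J\to M\to L_I\to 0$ and the extension class lives in $\mathrm{Ext}^1_{B_{k,n}}(L_I,L_J)$, not $\mathrm{Ext}^1_{B_{k,n}}(L_J,L_I)$. Beyond that, your outline of the contrapositive is in the right spirit, and you correctly identify that the substantive work lies in ruling out $r_1\le 2$. But as it stands this is a plan rather than a proof: the assertion that one or two boxes can be ``unhooked'' to produce a rank~$1$ direct summand is exactly the step that requires an honest construction of a non-trivial idempotent (or an explicit retraction), and you acknowledge you have not done this. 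Completing that step amounts to reproducing the argument in the reference, so to turn your proposal into a proof you would in effect be rewriting \cite{BBGE}.
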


This proposition tells us that when dealing with rank 2 indecomposable modules, we can assume that the poset of such a module  is of the form $1^{r_1}\mid 2$, for $r_1\geq 3$, and that its layers are $r$-interlacing, where $r\geq r_1\geq 3$. We say that $I$ and $J$ are almost tightly $3$-interlacing if $I\mid J$ has poset $1^3\mid 2$, and  $I\setminus J=\{a_1\}\cup\{a_2\}\cup \{a_3, \dots, a_{3+r}\}$, $J\setminus I=\{b_1\}\cup\{b_2\}\cup \{b_3, \dots, b_{3+r}\}$, $r\geq 0$, and $b_1<a_1<b_2<a_2<b_3< \dots <b_{3+r}<a_3< \dots < a_{3+r}.$ Our main results are the following two theorems. 

\begin{theorem} [Theorem~\ref{thm:unique}, Theorem~\ref{thm:infmod}]
An indecomposable rank $2$ module $M\in {\rm CM}(B_{k,n})$ is uniquely determined by its profile if and only if its poset is $1^3\mid 2$ and its layers are almost tightly $3$-interlacing. 
\end{theorem}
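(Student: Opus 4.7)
The plan is to observe that this theorem is the conjunction of two independent claims, corresponding to the two theorems cited in its statement. By Proposition~\ref{poset} we already know that, for an indecomposable rank $2$ module, the poset must be of the form $1^{r_1}\mid 2$ with $r\ge r_1\ge 3$. So the theorem reduces to proving: (a) if $r_1=3$ and the layers are almost tightly $3$-interlacing (i.e.\ $r=3$), then the profile determines $M$ up to isomorphism; and (b) in every other case admissible by Proposition~\ref{poset}, namely $r_1\ge 4$, or $r_1=3$ together with $r\ge 4$, there exist non-isomorphic indecomposable rank $2$ modules sharing the same profile. These two halves can be tackled separately using the explicit construction of rank $2$ modules from pairs of rims.

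For (a), I would fix rims $I,J$ with poset $1^3\mid 2$ that are almost tightly $3$-interlacing, and parametrise all indecomposable extensions $0\to L_J\to M\to L_I\to 0$ with profile $I\mid J$. Using the Jensen--King--Su description of $L_I, L_J$ as lattice modules with the $x_i, y_i$ arrow action, such an extension is encoded by a finite set of scalars placed at the interlacing positions. The almost tight condition with three boxes reduces these data to exactly three gluing parameters, one per box, each forced to be nonzero by indecomposability (otherwise $M$ would split). The group $\mathrm{Aut}(L_I)\times\mathrm{Aut}(L_J)\cong k^\times\times k^\times$ acts on $(k^\times)^3$ by independent rescaling at each box, and a direct computation shows this action has a single orbit, yielding uniqueness.

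For (b), I would exhibit an explicit infinite family of pairwise non-isomorphic indecomposable rank $2$ modules with a given such profile. When $r_1\ge 4$ the lattice diagram has at least four boxes and hence at least four independent gluing scalars; the two-torus action then leaves a moduli space of dimension at least $r_1-2\ge 2$, and I would write down a one-parameter family of modules parametrised by a cross-ratio of the box scalars. The case $r_1=3$ with $r\ge 4$ is handled analogously, exploiting the extra parameters arising from the non-tight interlacing positions lying inside the three boxes.

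The main obstacle is the non-isomorphism verification in (b): two different tuples of gluing scalars could a priori still yield isomorphic $B_{k,n}$-modules via an isomorphism that does not come from the diagonal automorphisms of $L_I$ and $L_J$. The core of the argument is therefore to classify all $B_{k,n}$-module homomorphisms between two such candidate rank $2$ modules, and to extract an invariant, most naturally a cross-ratio of the box parameters, that is provably preserved by any isomorphism and takes infinitely many distinct values on the family. Carrying this out combines the explicit lattice description of $L_I$ from \cite{JKS16} with the interlacing combinatorics, and is where I expect the bulk of the technical work to lie.
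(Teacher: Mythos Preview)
Your case split rests on a misreading of the definition: ``almost tightly $3$-interlacing'' is \emph{not} synonymous with ``$r=3$ and poset $1^3\mid 2$''. It is the strictly stronger condition that the three (square) boxes of the profile have sizes $l_1\le l_2\le l_3$ with $l_1=l_2=1$. Consequently the complement of the ``if'' direction is not exhausted by $r_1\ge 4$ together with ($r_1=3$, $r\ge 4$); it also contains the case $r_1=r=3$ with three square boxes and $l_2\ge 2$. This case is precisely where Theorem~\ref{thm:unique} does the real work: two indecomposable modules with the same profile are isomorphic if and only if the $t$-adic valuations $s_g$ of the box sums $B_{i_g}+B_{j_g}$ agree, and the corollary counts $l_1(\tfrac{l_1-1}{2}+l_2)$ isomorphism classes, which exceeds~$1$ exactly when $l_2\ge 2$. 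Your plan (b) does not address this, and the cross-ratio mechanism you propose cannot detect it, since here the non-isomorphic modules form a \emph{finite} family indexed by discrete divisibility data rather than a continuous moduli.

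Your argument for (a) is also not correct as written. First, $\mathrm{Aut}(L_I)\cong\CC[[t]]^\times$, not $k^\times$. Second, and more seriously, the diagonal automorphisms of $L_I$ and $L_J$ do \emph{not} rescale the box parameters independently; they rescale all of them by the same unit, so a two-torus acting on $(\CC^\times)^3$ cannot possibly have a single orbit. What actually makes uniqueness go through in the almost tight case is that one must allow genuinely non-diagonal isomorphisms over $\CC[[t]]$: the paper solves explicitly for an invertible $\varphi_0=\begin{pmatrix}\alpha&0\\ \gamma&1\end{pmatrix}$ with $t\mid\gamma$, and the resulting divisibility constraints are satisfiable for arbitrary admissible tuples precisely because $l_1=l_2=1$ forces all $s_g=0$. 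Your cross-ratio idea for $r_1\ge 4$ is, on the other hand, close to the paper's Theorem~\ref{thm:infinite}; but the whole proof should be reorganised around the $t$-adic valuation invariants $s_g$, not around a torus quotient.
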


More precisely, in the case of $r$-interlacing rank 1 layers with poset $1^{r_1}\mid 2$, where $r\geq r_1\geq  4$, we show that there are infinitely many non-isomorphic rank 2 modules with the same profile, e.g.\ there are infinitely many non-isomorphic indecomposable rank 2 modules with filtration $\{1,3,5,7\}\mid \{2,4,6,8\}$ in the tame case $(4,8)$.  

\begin{theorem} [Theorem~\ref{thm:infmod}]
Let $M$ be an indecomposable rank $2$ module with profile $I\mid J$, where  $I$ and $J$ are $r$-interlacing with poset 
$1^{r_1}\mid 2$, where $r\geq r_1\geq  4$. Then there are infinitely many non-isomorphic rank $2$ indecomposable  modules  from 
${\rm CM}(B_{k,n})$ with profile $I \mid J$.
\end{theorem}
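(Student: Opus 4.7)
The plan is to construct an explicit one-parameter family $\{M_\lambda\}_{\lambda\in \CC^*}$ of rank $2$ Cohen-Macaulay $B_{k,n}$-modules with profile $I\mid J$ and to prove that infinitely many of these modules are pairwise non-isomorphic and indecomposable. The construction would build on the general rank $2$ construction developed earlier in the paper, which associates a module to any choice of nonzero scalars at each of the $r_1$ boxes in the lattice diagram of $I\mid J$. Starting from a reference module $M_0$ corresponding to all box-scalars equal to $1$, I would define $M_\lambda$ to be the module obtained by replacing the scalar at one chosen box by $\lambda$ while keeping the other $r_1-1 \geq 3$ box-scalars equal to $1$. Verifying that each $M_\lambda$ is a genuine module amounts to checking the defining relations of $B_{k,n}$, which hold because the relations are local and respect an individual scalar twist; the profile is $I\mid J$ by construction.

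Indecomposability of $M_\lambda$ for $\lambda\in \CC^*$ is then established by showing that $\mathrm{End}(M_\lambda)$ has no nontrivial idempotents: any such idempotent restricts to idempotents on the indecomposable rank $1$ layers $L_I$ and $L_J$, and a nontrivial decomposition would have to split the lattice diagram along the box-maps; but all box-maps are nonzero so no such splitting exists. For non-isomorphism, given an isomorphism $\phi: M_\lambda \to M_\mu$ I would show that it restricts to scalar-type automorphisms of $L_I$ and $L_J$ and that these restrictions act on the box-parameter tuple by a single common multiplicative scalar $c$, namely the ratio of the two restrictions. Hence $M_\lambda \cong M_\mu$ forces $(\mu, 1, \dots, 1) = c\,(\lambda, 1, \dots, 1)$, and the $r_1-1 \geq 3$ trailing constant entries force $c=1$ and thus $\lambda=\mu$. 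This produces uncountably many pairwise non-isomorphic indecomposable rank $2$ modules with profile $I\mid J$.

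The main obstacle is the non-isomorphism step, specifically controlling the action of $\mathrm{End}(L_I)$ and $\mathrm{End}(L_J)$ on the box-parameters. These endomorphism rings are local but generally strictly larger than $\CC$ in ${\rm CM}(B_{k,n})$ (they contain nontrivial elements coming from the center of $B_{k,n}$), and one must verify that the residual freedom surviving their action is at least one-dimensional. This is precisely where the hypothesis $r_1 \geq 4$ is decisive: the three or more reference boxes of value $1$ over-determine the scaling and rigidify the module structure, whereas in the almost tight $r_1=3$ case the residual symmetry suffices to identify all scalar choices, consistent with the uniqueness statement of the companion theorem.
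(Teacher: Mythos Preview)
Your proposal contains two genuine gaps.

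First, the tuple $(\lambda, 1, \dots, 1)$ does not define a $B_{k,n}$-module. The rank $2$ construction requires the box-parameters to satisfy $\sum_{l=1}^{r_1} (b_{i_l}+b_{j_l}) = 0$ in order for the relation $x^k = y^{n-k}$ to hold (see the discussion before Proposition~\ref{prop:constr-works} and at the start of Section~\ref{sec:general}); your reference tuple $(1,\dots,1)$ sums to $r_1 \neq 0$, and $(\lambda, 1, \dots, 1)$ sums to $\lambda + r_1 - 1$, so neither gives a module except for a single isolated value of~$\lambda$.

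Second, and more fundamentally, the non-isomorphism argument misidentifies how isomorphisms act on box-parameters. An isomorphism $\varphi$ is given at a fixed vertex by a matrix $\begin{pmatrix} \alpha & \beta \\ \gamma & \delta \end{pmatrix}$ with $t \mid \gamma$; its restrictions to $L_J$ and to $L_I$ are indeed scalar (given by $\alpha$ and $\delta$), but the off-diagonal entry $\gamma$ need not vanish. The compatibility condition at the $l$-th box reads, modulo $t$, as $-\alpha B_l + C_l \delta - B_l C_l\, (t^{-1}\gamma) \equiv 0$, so the induced action is the M\"obius-type map
\[
B_l \longmapsto C_l = \frac{\alpha B_l}{\delta - B_l\, t^{-1}\gamma},
\]
not the scaling $B_l \mapsto cB_l$ that you claim. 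Fixing $r_1 - 1$ entries equal to $1$ then imposes only the single relation $\delta - t^{-1}\gamma \equiv \alpha \pmod t$, leaving one degree of freedom; a direct check shows this freedom carries $\lambda$ to almost any $\mu$. So even after repairing the sum constraint, a one-parameter family of this shape collapses to at most finitely many isomorphism classes. Your indecomposability sketch is likewise too coarse: Section~\ref{sec:4-interlacing} (Case~3 with all four-term partial sums divisible by~$t$) shows that having all box-maps nonzero does \emph{not} suffice for indecomposability once $r_1 \geq 4$; one must also control the partial sums.

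The paper's route (adapting Theorem~\ref{thm:infinite} and Corollary~\ref{cor:inf}) avoids both issues by placing the parameters $(1, \beta, -1, -\beta)$ on four of the $r_1$ boxes and taking the remaining box-sums divisible by~$t$: the sum constraint is built in, indecomposability follows from the explicit divisibility analysis, and the isomorphism criterion becomes a cross-ratio condition under which $\beta^2$ is a genuine invariant, yielding infinitely many classes.
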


In the case $r=r_1=4$, we show that this infinite family of indecomposable modules with the profile $I\mid J$ is 
parameterized by the set $\mathbb C\setminus {\{0,1,-1\}}$ where two points from this set are identified if their sum is  $0$. For the filtration layers $I$ and $J$ of an indecomposable module with profile $I\mid J$, 
we construct all decomposable rank 2 modules that are extensions of these rank 1 modules, i.e.\ we construct all decomposable modules that appear as middle terms in short exact sequences with $I$ and $J$ as end terms.

The paper is organized as follows. In Section~\ref{sec:prelims}, we recall the definitions and key results
about Grassmannian cluster categories.  In Section~\ref{sec:tight-3}, we give the construction of rank 2 modules in the case when the layers are tightly $3$-interlacing. 
This covers in particular the tame case $(3,9)$ and almost all rank $2$ modules in the tame 
case $(4,8)$. 
Section~\ref{sec:non-tight3} is devoted to the cases of non-tightly $3$-interlacing layers. 
Section~\ref{sec:4-interlacing} is devoted to the case of tightly $4$-interlacing layers, which completes 
the case $(4,8)$. In the last section, we deal with the general case of $r$-interlacing, when $r\geq 4$, 
and we show that there are infinitely many non-isomorphic rank 2 indecomposable modules with the same filtration. 

\subsection*{Acknowledgments} 
We thank  Matthew Pressland and Alastair King for numerous helpful conversations. K. B. was supported by a Royal Society Wolfson Fellowship. She is currently on leave from the University of Graz. D.B.\ was supported by the Austrian Science Fund Project Number P29807-35. J.-R.L. was supported by the Austrian Science Fund (FWF): M 2633-N32 Lise Meitner Program.

\section{Preliminaries}\label{sec:prelims}

We follow the exposition from \cite{JKS16, BB, BBGE} in order to introduce notation and background results. Let $\Gamma_n$ be the quiver of the boundary algebra, with vertices $1,2,\dots, n$ 
on a cycle and arrows $x_i: i-1\to i$, $y_i:i\to i-1$. 
We write ${\rm CM}(B_{k,n})$ for the category of maximal Cohen-Macaulay modules for  
the completed path algebra $B_{k,n}$ of $\Gamma_n$, with relations 
$xy-yx$ and $x^k-y^{n-k}$ (at every vertex). The centre of $B_{k,n}$ is 
$Z:=\CC[|t|]$, where $t=\sum_ix_iy_i$.  For example,  when $n=5$ we have the quiver
\begin{center}
\begin{tikzpicture}[scale=1]
\newcommand{\radius}{1.5cm}
\foreach \j in {1,...,5}{
  \path (90-72*\j:\radius) node[black] (w\j) {$\bullet$};
  \path (162-72*\j:\radius) node[black] (v\j) {};
  \path[->,>=latex] (v\j) edge[black,bend left=30,thick] node[black,auto] {$x_{\j}$} (w\j);
  \path[->,>=latex] (w\j) edge[black,bend left=30,thick] node[black,auto] {$y_{\j}$}(v\j);
}
\draw (90:\radius) node[above=3pt] {$5$};
\draw (162:\radius) node[above left] {$4$};
\draw (234:\radius) node[below left] {$3$};
\draw (306:\radius) node[below right] {$2$};
\draw (18:\radius) node[above right] {$1$};
\end{tikzpicture}
\end{center}

The algebra $B_{k,n}$  coincides with the quotient of the completed path
algebra of the graph $C$ (a circular graph with vertices
$C_0=\mathbb Z_n$ set clockwise around a circle, and with the set of edges, $C_1$, also
labeled by $\mathbb Z_n$, with edge $i$ joining vertices $i-1$ and $i$), i.e.\ the doubled quiver as above,
by the closure of the ideal generated by the relations above (we view the completed path
algebra of the graph $C$ 
as a topological algebra via the $m$-adic topology, where $m$ is the two-sided ideal
generated by the arrows of the quiver, see \cite[Section 1]{DWZ08}). The algebra 
$B_{k,n}$, that we will often denote by $B$ when there is no ambiguity, 
was introduced in \cite[Section 3]{JKS16}. 
Observe that $B_{k,n}$ is isomorphic to $B_{n-k,n}$, so we will always take $k\le \frac n 2$. 

\smallskip

The (maximal) Cohen-Macaulay $B$-modules are precisely those which are
free as $Z$-modules. Such a module $M$ is given by a representation
$\{M_i\,:\,i\in C_0\}$ of
the quiver with each $M_i$ a free $Z$-module of the same rank
(which is the rank of $M$).

\begin{defn}[\cite{JKS16}, Definition 3.5]
For any $B_{k,n}$-module $M$ and $K$ the field of fractions of $Z$, the {\bf rank} 
of $M$, denoted by ${\rm rk}(M)$,  is defined 
to be ${\rm rk}(M) = {\rm len}(M \otimes_Z K)$. 
\end{defn}

Note that $B\otimes_Z K\cong M_n ( K)$, 
which is a simple algebra. It is easy to check that the rank is additive on short exact sequences,
that ${\rm rk} (M) = 0$ for any finite-dimensional $B$-module 
(because these are torsion over $Z$) and 
that, for any Cohen-Macaulay $B$-module $M$ and every idempotent $e_j$, $1\leq j\leq n$, ${\rm rk}_Z(e_j M) = {\rm rk}(M)$, so that, in particular, ${\rm rk}_Z(M) = n  {\rm rk}(M)$.

\begin{defn}[\cite{JKS16}, Definition 5.1] \label{d:moduleMI}
For any $k$-subset   $I$  of $C_1$, we define a rank $1$ $B$-module
\[
  L_I = (U_i,\ i\in C_0 \,;\, x_i,y_i,\, i\in C_1)
\]
as follows.
For each vertex $i\in C_0$, set $U_i=\mathbb C[[t]]$ and,
for each edge $i\in C_1$, set
\begin{itemize}
\item[] $x_i\colon U_{i-1}\to U_{i}$ to be multiplication by $1$ if $i\in I$, and by $t$ if $i\not\in I$,
\item[] $y_i\colon U_{i}\to U_{i-1}$ to be multiplication by $t$ if $i\in I$, and by $1$ if $i\not\in I$.
\end{itemize}
\end{defn}

The module $L_I$ can be represented by a lattice diagram
$\mathcal{L}_I$ in which $U_0,U_1,U_2,\ldots, U_n$ are represented by columns of vertices (dots) from
left to right (with $U_0$ and $U_n$ to be identified), going down infinitely. 
The vertices in each column correspond to the natural monomial 
$\mathbb C$-basis of $\mathbb C[t]$.
The column corresponding to $U_{i+1}$ is displaced half a step vertically
downwards (respectively, upwards) in relation to $U_i$ if $i+1\in I$
(respectively, $i+1\not \in I$), and the actions of $x_i$ and $y_i$ are
shown as diagonal arrows. Note that the $k$-subset $I$ can then be read off as
the set of labels on the arrows pointing down to the right which are exposed
to the top of the diagram. For example, the lattice diagram $\mathcal{L}_{\{1,4,5\}}$
in the case $k=3$, $n=8$, is shown in the following picture   
\begin{center}
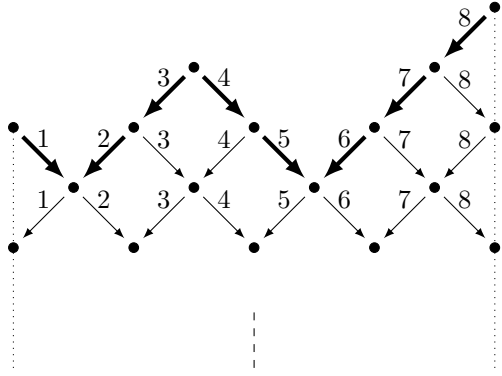
\begin{figure}[H]
\center
\begin{tikzpicture}[scale=0.8,baseline=(bb.base),
quivarrow/.style={black, -latex, thin}]
\newcommand{\seventh}{51.4} 
\newcommand{\circradius}{1.5cm}
\newcommand{\inradius}{1.2cm}
\newcommand{\outradius}{1.8cm}
\newcommand{\dotrad}{0.1cm} 
\newcommand{\bdrydotrad}{{0.8*\dotrad}} 
\path (0,0) node (bb) {}; 


\draw (0,0) circle(\bdrydotrad) [fill=black];
\draw (0,2) circle(\bdrydotrad) [fill=black];
\draw (1,1) circle(\bdrydotrad) [fill=black];
\draw (2,0) circle(\bdrydotrad) [fill=black];
\draw (2,2) circle(\bdrydotrad) [fill=black];
\draw (3,1) circle(\bdrydotrad) [fill=black];
\draw (3,3) circle(\bdrydotrad) [fill=black];
\draw (4,0) circle(\bdrydotrad) [fill=black];
\draw (4,2) circle(\bdrydotrad) [fill=black];
\draw (5,1) circle(\bdrydotrad) [fill=black];
\draw (6,0) circle(\bdrydotrad) [fill=black];
\draw (6,2) circle(\bdrydotrad) [fill=black];
\draw (7,1) circle(\bdrydotrad) [fill=black];
\draw (7,3) circle(\bdrydotrad) [fill=black];
\draw (8,2) circle(\bdrydotrad) [fill=black];
\draw (8,4) circle(\bdrydotrad) [fill=black];
\draw (8,0) circle(\bdrydotrad) [fill=black];


\draw [quivarrow,shorten <=5pt, shorten >=5pt, ultra thick] (0,2)-- node[above]{$1$} (1,1);
\draw [quivarrow,shorten <=5pt, shorten >=5pt] (1,1) -- node[above]{$1$} (0,0);
\draw [quivarrow,shorten <=5pt, shorten >=5pt, ultra thick] (2,2) -- node[above]{$2$} (1,1);
\draw [quivarrow,shorten <=5pt, shorten >=5pt] (1,1) -- node[above]{$2$} (2,0);
\draw [quivarrow,shorten <=5pt, shorten >=5pt, ultra thick] (3,3) -- node[above]{$3$} (2,2);
\draw [quivarrow,shorten <=5pt, shorten >=5pt] (2,2) -- node[above]{$3$} (3,1);
\draw [quivarrow,shorten <=5pt, shorten >=5pt] (3,1) -- node[above]{$3$} (2,0);
\draw [quivarrow,shorten <=5pt, shorten >=5pt, ultra thick] (3,3) -- node[above]{$4$} (4,2);
\draw [quivarrow,shorten <=5pt, shorten >=5pt] (4,2) -- node[above]{$4$} (3,1);
\draw [quivarrow,shorten <=5pt, shorten >=5pt] (3,1) -- node[above]{$4$} (4,0);
\draw [quivarrow,shorten <=5pt, shorten >=5pt, ultra thick] (4,2) -- node[above]{$5$} (5,1);
\draw [quivarrow,shorten <=5pt, shorten >=5pt] (5,1) -- node[above]{$5$} (4,0);
\draw [quivarrow,shorten <=5pt, shorten >=5pt, ultra thick] (6,2) -- node[above]{$6$} (5,1);
\draw [quivarrow,shorten <=5pt, shorten >=5pt] (5,1) -- node[above]{$6$} (6,0);
\draw [quivarrow,shorten <=5pt, shorten >=5pt] (6,2) -- node[above]{$7$} (7,1);
\draw [quivarrow,shorten <=5pt, shorten >=5pt] (7,1) -- node[above]{$7$} (6,0);
\draw [quivarrow,shorten <=5pt, shorten >=5pt, ultra thick] (7,3) -- node[above]{$7$} (6,2);
\draw [quivarrow,shorten <=5pt, shorten >=5pt] (7,3) -- node[above]{$8$} (8,2);
\draw [quivarrow,shorten <=5pt, shorten >=5pt] (8,2) -- node[above]{$8$} (7,1);
\draw [quivarrow,shorten <=5pt, shorten >=5pt, ultra thick] (8,4) -- node[above]{$8$} (7,3);
\draw [quivarrow,shorten <=5pt, shorten >=5pt] (7,1) -- node[above]{$8$} (8,0);

\draw [dotted] (0,-2) -- (0,2);
\draw [dotted] (8,-2) -- (8,4);

\draw [dashed] (4,-2) -- (4,-1);
\end{tikzpicture}
\caption{Lattice diagram of the module $L_{\{1,4,5\}}$} \label{Lattice}
\end{figure}
\end{center}

We see from Figure \ref{Lattice} that the module $L_I$ is determined by its upper boundary, denoted  by the thick lines, 
which we refer to as the {\em rim} of the module $L_I$ (this is why we call the $k$-subset $I$  the rim of $L_I$). 
Throughout this paper we will identify a rank 1 module $L_I$ with its rim. Moreover, most of the time we will omit the arrows in the rim of $L_I$ and represent it as an undirected graph. 

We say that $i$ is a {\em peak} of the rim $I$ if $i\notin I$ and $i+1\in I$.  In the above example, the peaks of $I=\{1,4,5\}$ are $3$ and $8$. We say that $i$ is a {\em valley} of the rim $I$ if $i\in I$ and $i+1\notin I$.  In the above example, the valleys of $I=\{1,4,5\}$ are $1$ and $5$.

\begin{prop}[\cite{JKS16}, Proposition 5.2]
Every rank $1$ Cohen-Macaulay $B$-module is isomorphic to $L_I$
for some unique $k$-subset $I$ of $C_1$.
\end{prop}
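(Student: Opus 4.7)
The plan is to show directly, by a suitable choice of $Z$-bases at each vertex, that the scalars attached to the arrows of an arbitrary rank $1$ Cohen--Macaulay $B$-module $M$ can be renormalised to match Definition~\ref{d:moduleMI} for some $k$-subset $I$. Pick a generator $e_i$ of the free rank $1$ $Z$-module $M_i$ and write $x_i(e_{i-1}) = f_i e_i$, $y_i(e_i) = g_i e_{i-1}$ with $f_i, g_i \in \mathbb{C}[|t|]$. Since $M$ is a $Z$-module and the central element $t$ equals $\sum_j x_j y_j \in B_{k,n}$, on $M_i$ only the term $j = i$ survives, giving $x_i y_i = t$ and hence $f_i g_i = t$ at every edge $i$. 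As $\mathbb{C}[|t|]$ is a discrete valuation ring with uniformiser $t$, the valuation pair $(v(f_i), v(g_i))$ must be either $(0,1)$ or $(1,0)$. Write $f_i = u_i t^{a_i}$, $g_i = v_i t^{1 - a_i}$ with $a_i \in \{0,1\}$ and units $u_i, v_i$ satisfying $u_i v_i = 1$, and set $I := \{i : a_i = 0\}$ for the candidate $k$-subset.

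Next I would extract the global constraints coming from the remaining relation $x^k = y^{n-k}$. Evaluated at vertex $0$ it reads $f_1 \cdots f_k = g_{k+1} g_{k+2} \cdots g_n$; multiplying through by $g_1 \cdots g_k$ turns the left-hand side into $\prod_{i=1}^k (f_i g_i) = t^k$ and collapses the right-hand side into $\prod_{i=1}^n g_i$, so $\prod_i g_i = t^k$ and symmetrically $\prod_i f_i = t^{n-k}$. Comparing $t$-valuations yields $|I| = k$, and comparing the unit parts yields $\prod_i u_i = \prod_i v_i = 1$.

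To finish I would rescale $e_i \mapsto c_i e_i$ with $c_0 := 1$ and $c_i := u_1 u_2 \cdots u_i$. Under this rescaling $f_i$ is multiplied by $c_{i-1} c_i^{-1} = u_i^{-1}$, becoming $t^{a_i}$, and correspondingly $g_i$ becomes $t^{1 - a_i}$; the resulting $B$-module is literally $L_I$ of Definition~\ref{d:moduleMI}. The rescaling closes up around the cycle precisely because $c_n = \prod_i u_i = 1 = c_0$, and this cocycle identity --- the only place where the relation $x^k = y^{n-k}$ is used non-trivially --- is the main subtlety of the argument: the relation is really killing the a priori obstruction to globalising the basis change around $C_0 = \mathbb{Z}_n$. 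Uniqueness of $I$ is then immediate, since any isomorphism of rank $1$ modules is given at each vertex by multiplication by a unit of $\mathbb{C}[|t|]$, which preserves the valuations $v(f_i)$; hence $I$ is an intrinsic invariant of $M$.
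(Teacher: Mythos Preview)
Your argument is correct and complete. The paper itself does not supply a proof of this proposition: it is simply quoted from \cite[Proposition~5.2]{JKS16} as background, so there is no in-paper proof to compare against. Your approach---normalising the scalars $f_i,g_i$ edge by edge using the DVR structure of $\mathbb{C}[|t|]$, then using the global relation $x^k=y^{n-k}$ to force $|I|=k$ and to kill the unit cocycle $\prod_i u_i$---is exactly the natural direct argument one would expect, and it is essentially how the result is established in \cite{JKS16} as well.
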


Every $B$-module has a canonical endomorphism given by multiplication by $t\in Z$.
For ${L}_I$ this corresponds to shifting $\mathcal{L}_I$ one step downwards.
Since $Z$ is central, ${\rm Hom}_B(M,N)$ is
a $Z$-module for arbitrary $B$-modules $M$ and $N$. If $M,N$ are free $Z$-modules, then so is ${\rm Hom}_B(M,N)$. In particular, for any two rank 1 
Cohen-Macaulay $B$-modules $L_I$ and $L_J$, ${\rm Hom}_B(L_I,L_J)$ is a free 
module of rank 1
over $Z=\mathbb C[[t]]$, generated by the canonical map given by placing the 
lattice of $L_I$ inside the lattice of $L_J$ as far up as possible so that no part of the rim of $L_I$ is strictly above the rim of $L_J$ \cite[Section 6]{JKS16}.

\begin{defn}[$r$-interlacing] \label{interlacing}
Let $I$ and $J$ be two $k$-subsets of $\{1,\dots,n\}$. The sets $I$ and $J$ are said to be {\em $r$-interlacing} if there exist subsets  $\{i_1,i_3,\dots,i_{2r-1}\}\subset I\setminus J$ and $\{i_2,i_4,\dots, i_{2r}\}\subset J\setminus I$ 
such that $i_1<i_2<i_3<\dots <i_{2r}<i_1$ (cyclically) 
and if there exist no larger subsets of $I$ and of $J$ with this property.  We say that $I$ and $J$ are 
{\em tightly $r$-interlacing} if they are $r$-interlacing and $|I\cap J|=k-r.$
\end{defn}

\begin{defn}\label{def-rigid}
A $B$-module is \emph{rigid} if ${\rm Ext}^1_B (M,M)=0$.
\end{defn}

If $I$ and $J$ are $r$-interlacing $k$-subsets, where $r<2$, then ${\rm Ext}_{B}^1(L_I,L_J)=0$, in particular, 
rank 1 modules are rigid (see \cite[Proposition 5.6]{JKS16}).

Every rigid indecomposable $M$ of rank $n$ in ${\rm CM}(B)$ has a filtration having factors 
$L_{I_1},L_{I_{2}},\dots, L_{I_n}$ of rank 1. 
This filtration is noted in its \emph{profile}, 
${\rm pr} (M) = I_1 \mid I_2\mid\ldots \mid I_n$, \cite[Corollary 6.7]{JKS16}.
In the case of  a rank $2$ module $M$ with filtration $L_I\mid L_J$ (i.e.~with profile $I\mid J$), 
we picture this module by drawing the rim $J$ below the rim $I$, in such a way that $J$ is placed as far up as possible so that no part of the rim $J$ is strictly above the rim $I$. We refer to this picture of $M$ 
as its {\em lattice diagram.}
Note that there is at least one point where the rims $I$ and $J$ meet (see {Figure~\ref{fig:3boxes-i-j} for an example}). 

\begin{rem}\label{rem:poset-boxes}
Suppose that the two $k$-subsets $I$ and $J$ are $r$-interlacing and that $M$ is a rank $2$ module 
with profile $I\mid J$. Then the two rims in the lattice diagram of $M$ form a number of regions between 
the points where the two rims meet but differ in direction before and/or after meeting. 
We call these regions the 
{\em boxes} formed by the rims or by the profile. 
The term box is a combinatorial tool which will be very useful in finding conditions for indecomposability.  
Let us point out, however, that the module $M$ might be a direct sum in which case the lattice diagram is really a pair of  lattice diagrams of rank 1 modules. We still view the corresponding diagram as forming boxes. 
If $I$ and $J$ are $r$-interlacing, then they form exactly $r$-boxes if and only if they are tightly $r$-interlacing.
(If we consider 
the lattice diagram as an infinite branched graph in the plane, the boxes are the closures of the 
finite regions in the complement.)
A lattice diagram with three boxes is shown in Figure~\ref{fig:boxes-poset}. 
If $M$ is a rank 2 module with $r_1$ boxes, with $r_1\le r$, the poset structure associated with $M$ is $1^{r_1}\mid 2$, see 
Figure~\ref{fig:boxes-poset}. 
\end{rem}

For background on the poset associated with an indecomposable module or with its profile, 
we refer the reader to~\cite[Section 6]{JKS16} and to~\cite[Section 2]{BBGEL20}.

{Consider the tame cases $(k,n)=(3,9)$ or $(k,n)=(4,8)$ and let $M$ be a rigid indecomposable rank 2 module of 
${\rm CM}(B_{k,n})$. Then $M\cong L_I\mid L_J$ where $I$ and $J$ are $3$-interlacing, \cite[Proposition 5.5]{BBGE}.} 
{Furthermore, we also know that if $I$ and $J$ are tightly $3$-interlacing and if $M\cong L_I\mid L_J$, then 
$M$ is indecomposable, \cite[Lemma 5.11]{BBGE}.} 

We therefore start studying pairs of tightly $3$-interlacing $k$-subsets in order to construct indecomposable 
rank 2 modules and will later consider higher interlacing.

Throughout the paper, our strategy to prove a module is indecomposable is to show that its endomorphism ring does not have 
non-trivial idempotent elements. 
When we deal with a decomposable rank 2 module, in order to determine the summands of this module, we construct a 
non-trivial idempotent in its endomorphism ring, and then find corresponding eigenvectors at each vertex of the quiver and check 
the action of the morphisms $x_i$ on these eigenvectors.

\section{Tight $3$-interlacing} \label{sec:tight-3}

In this section we give a construction of rank 2 indecomposable modules with the profile $I\mid J$ in the case when $I$ and $J$ are  tightly 3-interlacing $k$-subsets, i.e.\ when $|I\setminus J|=|J\setminus I|=3$ 
and non-common elements of $I$ and $J$ interlace. This covers all indecomposable rank 2 modules in the tame case $(3,9)$ and almost all indecomposable rank 2 modules in the tame case $(4,8)$.

We want to define a rank 2 module $\MM(I,J)$ with filtration $L_I\mid L_J$ in a similar way as rank 1 modules 
are defined in ${\rm CM}(B_{k,n})$.   
Let $V_i:=\CC[|t|]\oplus\CC[|t|]$, $i=1,\dots, n$. 
The module $\MM(I,J)$ has $V_i$ at each vertex $1,2,\dots, n$ of $\Gamma_n$.   In order to have a 
module structure for $B_{k,n}$, for every $i$ we need to define  $x_i\colon V_{i-1}\to V_{i}$ and $y_i\colon V_{i}\to V_{i-1}$ in such a way that $x_iy_i=t\cdot \id$ and $x^k=y^{n-k}$.   

Since $L_J$ is a submodule of a rank 2 module $\MM(I,J)$, and $L_I$ is the quotient, if we extend the basis of $L_J$ to the basis of the module $\MM(I,J)$, then with respect to that basis all the matrices $x_i$, $y_i$ must be upper triangular with diagonal entries from the set $\{1,t\}$. More precisely, the diagonal of $x_i$ (resp.\ $y_i$) is $(1,t)$ (resp.\ $(t,1)$) if $i\in J\setminus I$, it is $(t,1)$ (resp.\ $(1,t)$)  if $i\in I\setminus J$, $(t,t)$ (resp.\ $(1,1)$) if $i\in I^c\cap J^c$, and $(1,1)$ (resp.\ $(t,t)$) if $i\in I\cap J$. The only entries in all these matrices that are left to be determined are the ones in the upper right corner. 

Let us assume that $n=6$, $I=\{1,3,5\}$, and $J=\{2,4,6\}$. In the general case, the arguments are the same. 
Denote by $b_i$ the upper right corner element of $x_i$. From  $x_iy_i=t\cdot \id$, we have that the upper right corner 
element of $y_i$ is $-b_i$.  From the relation $x^k=y^{n-k}$ it follows that $\sum_{i=1}^6b_i=0$. 
Thus, our module $\MM(I,J)$ is
\begin{center} \hfil
\xymatrix{ 
0\ar@<.8ex>[rr]^{\begin{pmatrix} t & b_1 \\ 0 & 1 \end{pmatrix}}
&&1 \ar@<.8ex>[rr]^{\begin{pmatrix} 1 & b_2 \\ 0 & t \end{pmatrix}} \ar@<.8ex>[ll]^{\begin{pmatrix} 1 & -b_1 \\ 0 & t \end{pmatrix}} 
&& 2  \ar@<.8ex>[rr]^{\begin{pmatrix} t & b_3 \\ 0 & 1 \end{pmatrix}}\ar@<.8ex>[ll]^{\begin{pmatrix} t & -b_2 \\ 0 & 1 \end{pmatrix}} 
&& 3  \ar@<.8ex>[rr]^{\begin{pmatrix} 1 & b_4 \\ 0 & t \end{pmatrix}} \ar@<.8ex>[ll]^{\begin{pmatrix} 1 & -b_3 \\ 0 & t \end{pmatrix}}
&&4 \ar@<.8ex>[rr]^{\begin{pmatrix} t & b_5 \\ 0 & 1 \end{pmatrix}} \ar@<.8ex>[ll]^{\begin{pmatrix} t & -b_4 \\ 0 & 1 \end{pmatrix}}
&&5 \ar@<.8ex>[rr]^{\begin{pmatrix} 1 & b_6 \\ 0 & t \end{pmatrix}} \ar@<.8ex>[ll]^{\begin{pmatrix} 1 & -b_5 \\ 0 & t \end{pmatrix}}
&&6\,\,\,\ar@<.8ex>[ll]^{\begin{pmatrix} t & -b_6 \\ 0 & 1 \end{pmatrix}}
} \hfil
\end{center}
with $\sum b_i=0$. 
We say that $\mathbb M(I,J)$ is determined by the 6-tuple $(b_1,b_2,b_3, b_4,b_5,b_6)$. 

%
\subsection{Divisibility conditions for (in)decomposability}\label{ssec:divisibility}

Let $I,J$ be two $k$-subsets and $\MM(I,J)$ be given by the tuple $(b_1,b_2,b_3, b_4,b_5,b_6)$ with 
$\sum b_i=0$. 
The question is how to determine the $b_i$'s so that the module $\mathbb M(I,J)$ is indecomposable. Assume first that $\mathbb M(I,J)$ is decomposable and that $L_J$ is a direct summand of $\mathbb M(I,J)$. Then there exists a retraction $\mu=(\mu_i)_{i=1}^6$ such that $\mu_i\circ \theta_i=\id$, where $(\theta_i)_{i=1}^6$ is the natural injection of $L_J$ into $\mathbb M(I,J)$. Using the same basis as before, we can assume that $\mu_i=[1\,\, \alpha_i ]$ for some 
$\alpha_i\in \CC[[t]]$. From the commutativity relations we have $\id\circ \mu_i=\mu_{i+1}\circ x_{i+1}$ for $i$ odd, and  $t\cdot \id\circ \mu_i=\mu_{i+1}\circ x_{i+1}$ for $i$ even. It follows that 
$\alpha_i=b_{i+1}+t\alpha_{i+1}$ for $i$ odd, and $t\alpha_i=b_{i+1}+\alpha_{i+1}$ for $i$ even. From this we have 
\begin{align*}
t(\alpha_2-\alpha_4)&=b_3+b_4,\\
t(\alpha_4-\alpha_6)&=b_5+b_6,\\
t(\alpha_6-\alpha_2)&=b_1+b_2.
\end{align*}
Thus, if $L_J$ is a direct summand of $\mathbb M(I,J)$, then $t| b_3+b_4$, $t|b_5+b_6$, and $t| b_1+b_2$ (and we can easily find elements $\alpha_i$, $i=1,\dots,6$, satisfying previous equations). If only one of these conditions is not met, then $L_J$ is not a direct summand of $M$. For example, if we choose $b_2=-b_3=0$, $b_4=-b_5=1$, and $b_6=-b_1=2$ in the construction of the module  $\mathbb M(I,J)$, then $L_J$ is not a direct summand of $\mathbb M(I,J)$.  
Our aim is to study the structure of the module $\MM(I,J)$ in terms of the divisibility conditions the coefficients $b_i$ satisfy.  

\begin{rem} If $L_J$ is not a summand of $M$, it does not mean that $M$ is indecomposable (cf.\ Theorem 3.12 in \cite{BBGE}). 
\end{rem}

Let us now consider the general case, i.e.\ let  $\mathbb M(I,J)$ be the module as defined above,  but in general terms when $I$ and $J$ are tightly $3$-interlacing. Write $I\setminus J$ as $\{i_1,i_2,i_3\}$ and $J\setminus I=\{j_1,j_2,j_3\}$ so that 
$1\le i_1<j_1<i_2<j_2<i_3<j_3\le n$. Define 
\begin{align*}
&&&&&&x_{i_{r}}&=\begin{pmatrix} t& b_{2r-1} \\ 0 & 1 \end{pmatrix},& x_{j_{r}}&=\begin{pmatrix} 1& b_{2r} \\ 0 & t \end{pmatrix}, &&&&&\\ 
&&&&&&y_{i_{r}}&=\begin{pmatrix} 1& -b_{2r-1} \\ 0 & t \end{pmatrix}, &y_{j_{r}}&=\begin{pmatrix} t& -b_{2r} \\ 0 & 1 \end{pmatrix},&&&&&
\end{align*}
for $r=1,2,3$ (see the previous figure for $n=6$).  For  $i\in  I^c \cap J^c$,  we set  $x_i=\begin{pmatrix} t & 0 \\ 0 & t \end{pmatrix}$ and $y_i=\begin{pmatrix} 1 & 0 \\ 0 & 1 \end{pmatrix}$. For  $i\in I\cap J$,  we set  $x_i=\begin{pmatrix} 1 & 0 \\ 0 & 1 \end{pmatrix}$ and $y_i=\begin{pmatrix} t & 0 \\ 0 & t \end{pmatrix}$.   Also, we assume that $\sum_{r=1}^6b_r=0$.  Note that for  $i\in  (I^c \cap J^c)\cup (I\cap J)$ we define the matrices $x_i$ and $y_i$   to be diagonal, i.e.\ we assume that the upper right corner of $x_i$ and $y_i$ is $0$ if $i\in  (I^c \cap J^c)\cup (I\cap J)$. {We can achieve this under a suitable base change of  $V_i$}.

 By construction it holds that  $xy=yx$ and $x^k=y^{n-k}$ at all vertices, and that $\MM(I,J)$ is free over the centre of the boundary algebra. Hence, the following proposition holds.

\begin{prop} 
The above-constructed module $\MM(I,J)$ is in 
${\rm CM}(B_{k,n})$. 
\end{prop}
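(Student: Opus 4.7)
The plan is to verify the two ingredients announced in the paragraph preceding the proposition: that the matrices $x_i,y_i$ satisfy the defining relations $xy=yx$ and $x^k=y^{n-k}$ of $B_{k,n}$ at every vertex, and that $\MM(I,J)$ is free over the centre $Z=\CC[[t]]$. Freeness is the easy half: each $V_i=\CC[[t]]^2$ is tautologically free of rank two over $Z$, and once the preprojective relation is verified the element $t\in Z$ acts as $t\cdot\id$ on every $V_i$, so the two $Z$-actions agree and $\MM(I,J)$ is of rank two in the sense of Definition~\ref{d:moduleMI}.

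For $x_iy_i=y_ix_i=t\cdot\id$ at a fixed vertex $i$, I would do a four-case analysis according to whether $i$ lies in $I\setminus J$, $J\setminus I$, $I\cap J$ or $I^c\cap J^c$. In each case the $2\times 2$ matrices are prescribed explicitly, the diagonal pair of $x_i$ is $(1,t)$ or $(t,1)$ (swapped in $y_i$), and the off-diagonal entries $b$ of $x_i$ and $-b$ of $y_i$ cancel when the two matrices are multiplied in either order.

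For the higher relation $x^k=y^{n-k}$, interpreted as the equality of maps $V_i\to V_{i+k}$ for each vertex $i$, both sides are products of upper triangular matrices, hence upper triangular. The diagonal entries can be handled without any computation: in our basis the top-left diagonal entry of each $x_p,y_p$ is the scalar by which the corresponding generator acts on $L_J$, and the bottom-right diagonal entry the scalar by which it acts on $L_I$. Thus the diagonals of $x^k$ and $y^{n-k}$ reduce to the already-established relations for the rank one modules $L_I,L_J\in{\rm CM}(B_{k,n})$ of \cite[Proposition 5.2]{JKS16}.

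The only nontrivial verification is the $(1,2)$ entry. Expanding the product formula for upper triangular matrices, the $(1,2)$ entry of $x^k$ at vertex $i$ is a $\CC[[t]]$-linear combination $\sum c_r b_r$ summed over those $p_r\in(I\setminus J)\cup(J\setminus I)$ that lie in $\{i+1,\ldots,i+k\}$; symmetrically, the $(1,2)$ entry of $y^{n-k}$ is $-\sum c'_r b_r$ summed over the complementary positions in $\{i+k+1,\ldots,i+n\}$, the sign coming from the $-b_r$ in the upper right of $y_{p_r}$. Using tight $3$-interlacing, the positions $p_1,\ldots,p_6$ alternate between $I\setminus J$ and $J\setminus I$ around the cycle, so any two consecutive positions of $I\triangle J$ are of opposite type. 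A short telescoping comparison of the exponents attached to consecutive positions $p_r$ then shows that all $c_r$ and $c'_r$ coincide: the in-between positions lie in $(I\cap J)\cup(I^c\cap J^c)$ and so contribute identically to the top and bottom diagonal products. Consequently $(x^k-y^{n-k})_{(1,2)}$ is a uniform $\CC[[t]]$-multiple of $\sum_{r=1}^6 b_r$, which vanishes by hypothesis. The main obstacle is this last bookkeeping step, where the alternation from tight $3$-interlacing is what forces the coefficients to be uniform; everything else is either local matrix arithmetic or a reduction to the rank one case.
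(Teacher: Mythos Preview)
Your argument is correct and matches the paper's approach, which consists only of the sentence preceding the proposition: the relations $xy=yx$ and $x^k=y^{n-k}$ hold by construction (given $\sum_r b_r=0$), and freeness over $Z$ is manifest since each $V_i=\CC[[t]]^2$. Your verification of the $(1,2)$ entry via the alternation of $I\triangle J$ correctly fills in what the paper leaves implicit; a slightly cleaner route, which the paper adopts in Section~\ref{sec:non-tight3}, is to use $x_iy_i=t\cdot\id$ to reduce $x^k=y^{n-k}$ to the single relation $x^n=t^{n-k}\cdot\id$, so the telescoping only has to run once around the full cycle and no separate matching of the $x$-side and $y$-side coefficients is needed.
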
 

For the remainder of the paper, if $w=t^av$, for some positive integer $a$, then $t^{-a}w$ 
stands for $v$. 

\begin{prop} \label{lm:n6-hom}
Let $I,J$ be tightly $3$-interlacing, $n\ge 6$ arbitrary, $I\setminus J=\{i_1,i_2,i_3\}$, and $J\setminus I=\{j_1,j_2,j_3\}$, where $1\le i_1<j_1<i_2<j_2<i_3<j_3\le n$. If $\varphi=( \varphi_i)_{i=1}^n\in$ {\rm Hom}$(\MM(I,J),\MM(I,J))$, then   
\begin{align*}
\varphi_{j_3}&=\begin{pmatrix}a & b \\ c & d  \end{pmatrix},\\
\varphi_{i_1}&= \begin{pmatrix}a+b_1t^{-1}c & tb+(d-a)b_1-b_1^2t^{-1}c \\ t^{-1}c& d-b_1t^{-1}c  \end{pmatrix},\\
\varphi_{j_1}&=\begin{pmatrix}a+(b_1+b_2)t^{-1}c & b+t^{-1}((d-a)(b_1+b_2)-(b_1+b_2)^2t^{-1}c) \\ c& d-(b_1+b_2)t^{-1}c  \end{pmatrix},\\
\varphi_{i_2}&=\begin{pmatrix}a+(b_1+b_2+b_3)t^{-1}c & tb+(d-a)(b_1+b_2+b_3)-(b_1+b_2+b_3)^2t^{-1}c \\ t^{-1}c& d-(b_1+b_2+b_3)t^{-1}c  \end{pmatrix},\\
\varphi_{j_2}&=\begin{pmatrix}a+(b_1+b_2+b_3+b_4)t^{-1}c & b+t^{-1}((d-a)(b_1+b_2+b_3+b_4)-(b_1+b_2+b_3+b_4)^2t^{-1}c) \\ c& d-(b_1+b_2+b_3+b_4)t^{-1}c  \end{pmatrix},\\
\varphi_{i_3}&=\begin{pmatrix}a-b_6t^{-1}c & tb-(d-a)b_6-b_6^2t^{-1}c \\ t^{-1}c& d+b_6t^{-1}c  \end{pmatrix},\\
\varphi_{i}&=\varphi_{i-1}, \,\, \text{for } i\in (I^c \cap J^c)\cup (I\cap J),
\end{align*}
with $a,b,c,d\in \CC[|t|]$. Furthermore, $t\mid c$, $t\mid (d-a)(b_1+b_2)-(b_1+b_2)^2t^{-1}c$, and  $t\mid (d-a)(b_1+b_2+b_3+b_4)-(b_1+b_2+b_3+b_4)^2t^{-1}c$.  
\end{prop}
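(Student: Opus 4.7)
The plan is to determine an endomorphism $\varphi=(\varphi_i)_{i=1}^n$ vertex by vertex from its value at one reference vertex, using the commutativity relation $\varphi_i\circ x_i=x_i\circ\varphi_{i-1}$ at every edge $i$. First, at any vertex $i\in(I\cap J)\cup(I^c\cap J^c)$ the matrix $x_i$ is by construction a scalar multiple of the identity ($\id$ if $i\in I\cap J$ and $t\cdot\id$ if $i\in I^c\cap J^c$), so the relation forces $\varphi_i=\varphi_{i-1}$, using that $t$ is a non-zero-divisor in $\CC[|t|]$. This confirms the last line of the statement and reduces the problem to determining $\varphi$ at the six distinguished vertices $i_1,j_1,i_2,j_2,i_3,j_3$, with the commutativities at the intervening trivial vertices being automatic.

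Next I would set $\varphi_{j_3}=\begin{pmatrix} a & b\\ c & d\end{pmatrix}$ with $a,b,c,d\in\CC[|t|]$ and compute $\varphi_{i_1}$ by solving $\varphi_{i_1}\circ x_{i_1}=x_{i_1}\circ\varphi_{j_3}$, i.e.\ $\varphi_{i_1}=x_{i_1}\varphi_{j_3}x_{i_1}^{-1}$, with the inverse formed over the fraction field $\CC((t))$. Direct conjugation by $x_{i_1}=\begin{pmatrix} t & b_1\\ 0 & 1\end{pmatrix}$ yields the stated matrix, the factor $t^{-1}$ in the entries appearing precisely because $\det x_{i_1}=t$. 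Iterating the same procedure at $j_1$, $i_2$, $j_2$, $i_3$ in turn expresses every $\varphi_{i_s}$ and $\varphi_{j_s}$ in terms of $(a,b,c,d)$ together with a running partial sum $b_1+b_2+\dots+b_m$, where $m=2s-1$ at vertex $i_s$ and $m=2s$ at vertex $j_s$; the asymmetric-looking $-b_6$ in the formula for $\varphi_{i_3}$ is just the rewriting $b_1+b_2+b_3+b_4+b_5=-b_6$ permitted by the assumption $\sum b_\ell=0$.

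Finally, the requirement that each $\varphi_i$ be a genuine $\CC[|t|]$-linear endomorphism of $V_i$, rather than a merely $\CC((t))$-linear one, forces the divisibility conditions. Once one imposes $t\mid c$, coming from the bottom-left entry $t^{-1}c$ of $\varphi_{i_1}$, every subsequent occurrence of $t^{-1}c$ lies in $\CC[|t|]$, so the $\varphi_{i_s}$ are automatically integral. The only remaining entries with a free leading factor $t^{-1}$ are the upper-right entries of $\varphi_{j_1}$ and $\varphi_{j_2}$, and demanding these lie in $\CC[|t|]$ yields exactly the two stated conditions on $(d-a)(b_1+b_2)-(b_1+b_2)^2t^{-1}c$ and $(d-a)(b_1+b_2+b_3+b_4)-(b_1+b_2+b_3+b_4)^2t^{-1}c$. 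It remains to verify that the one cycle-closing relation $\varphi_{j_3}\circ x_{j_3}=x_{j_3}\circ\varphi_{i_3}$ is automatically satisfied; a direct matrix computation using $\sum b_\ell=0$ confirms this and produces no further constraint. The main obstacle is purely bookkeeping: tracking cumulative partial sums of the $b_\ell$'s through the iterated conjugation, and checking that no spurious $t^{-1}$ terms appear beyond those accounted for by the three listed divisibilities.
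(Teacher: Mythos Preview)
Your proposal is correct and follows essentially the same approach as the paper: both start from $\varphi_{j_3}$ and propagate around the cycle via the commutativity relations $\varphi_i x_i = x_i \varphi_{i-1}$, noting that scalar $x_i$'s force $\varphi_i=\varphi_{i-1}$ at trivial vertices. Your phrasing in terms of conjugation over $\CC((t))$ followed by an integrality check is a slightly cleaner packaging of the same entry-by-entry computation the paper carries out, and your explicit mention of the cycle-closing consistency check (which the paper leaves implicit) is a nice touch.
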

\begin{proof} 

First we consider the case $n=6$.  Let $\varphi=(\varphi_1,\dots, \varphi_6)$ be an endomorphism of $\MM(I,J)$, where each $\varphi_i$ is an element of $M_2(\CC[|t|])$ (matrices over the centre). 
\begin{center}
\xymatrix{
6\ar@<.8ex>[rr]^{\begin{pmatrix} t & b_1 \\ 0 & 1 \end{pmatrix}} \ar@<.1ex>[ddd]_{\varphi_6}
&&1 \ar@<.8ex>[rr]^{\begin{pmatrix} 1 & b_2 \\ 0 & t \end{pmatrix}} \ar@<.8ex>[ll]^{\begin{pmatrix} 1 & -b_1 \\ 0 & t \end{pmatrix}} \ar@<.1ex>[ddd]_{\varphi_1}
&& 2  \ar@<.8ex>[rr]^{\begin{pmatrix} t & b_3 \\ 0 & 1 \end{pmatrix}}\ar@<.8ex>[ll]^{\begin{pmatrix} t & -b_2 \\ 0 & 1 \end{pmatrix}} \ar@<.1ex>[ddd]_{\varphi_2}
&& 3  \ar@<.8ex>[rr]^{\begin{pmatrix} 1 & b_4 \\ 0 & t \end{pmatrix}} \ar@<.8ex>[ll]^{\begin{pmatrix} 1 & -b_3 \\ 0 & t \end{pmatrix}} \ar@<.1ex>[ddd]_{\varphi_3}
&&4 \ar@<.8ex>[rr]^{\begin{pmatrix} t & b_5 \\ 0 & 1 \end{pmatrix}} \ar@<.8ex>[ll]^{\begin{pmatrix} t & -b_4 \\ 0 & 1 \end{pmatrix}} \ar@<.1ex>[ddd]_{\varphi_4}
&&5 \ar@<.8ex>[rr]^{\begin{pmatrix} 1 & b_6 \\ 0 & t \end{pmatrix}} \ar@<.8ex>[ll]^{\begin{pmatrix} 1 & -b_5 \\ 0 & t \end{pmatrix}} \ar@<.1ex>[ddd]_{\varphi_5}
&&6 \ar@<.8ex>[ll]^{\begin{pmatrix} t & -b_6 \\ 0 & 1 \end{pmatrix}}\ar@<.1ex>[ddd]_{\varphi_6}\\
&&&&&&&&&&\\
&&&&&&&&&&\\
6\ar@<.8ex>[rr]^{\begin{pmatrix} t & b_1 \\ 0 & 1 \end{pmatrix}}
&&1 \ar@<.8ex>[rr]^{\begin{pmatrix} 1 & b_2 \\ 0 & t \end{pmatrix}} \ar@<.8ex>[ll]^{\begin{pmatrix} 1 & -b_1 \\ 0 & t \end{pmatrix}} 
&& 2  \ar@<.8ex>[rr]^{\begin{pmatrix} t & b_3\\ 0 & 1 \end{pmatrix}}\ar@<.8ex>[ll]^{\begin{pmatrix} t & -b_2\\ 0 & 1 \end{pmatrix}} 
&& 3  \ar@<.8ex>[rr]^{\begin{pmatrix} 1 & b_4 \\ 0 & t \end{pmatrix}} \ar@<.8ex>[ll]^{\begin{pmatrix} 1 & -b_3 \\ 0 & t \end{pmatrix}}
&&4 \ar@<.8ex>[rr]^{\begin{pmatrix} t & b_5 \\ 0 & 1 \end{pmatrix}} \ar@<.8ex>[ll]^{\begin{pmatrix} t & -b_4 \\ 0 & 1 \end{pmatrix}}
&&5 \ar@<.8ex>[rr]^{\begin{pmatrix} 1 & b_6 \\ 0 & t \end{pmatrix}} \ar@<.8ex>[ll]^{\begin{pmatrix} 1 & -b_5 \\ 0 & t \end{pmatrix}}
&&6 \ar@<.8ex>[ll]^{\begin{pmatrix} t & -b_6 \\ 0 & 1 \end{pmatrix}}
}
\end{center}

We use commutativity relations $x_{i+1}\varphi_i = \varphi_{i+1}x_{i+1}$, i.e.\ we check the relations:
\[
\begin{array}{clccl}
(i) & x_2\varphi_1 = \varphi_2x_2, & \quad & (ii) & x_3\varphi_2 =  \varphi_3x_3, \\
(iii) & x_4\varphi_3  = \varphi_4x_4, & & (iv) & x_5\varphi_4 = \varphi_5x_5,  \\
(v) & x_6\varphi_5   = \varphi_6x_6, & & (vi) & x_1\varphi_6   =  \varphi_1 x_1.
\end{array}
\]

Let $\varphi_0=\begin{pmatrix} a & b \\ c & d \end{pmatrix}$ and 
$\varphi_1=\begin{pmatrix} e & f \\ g & h \end{pmatrix}$.  From (vi), we get $\begin{pmatrix} at+b_1c & bt+b_1d \\ c & d\end{pmatrix} = \begin{pmatrix} et & eb_1+f \\ gt & h+gb_1 \end{pmatrix},$
and  $at+b_1c=et$, $bt+b_1d=eb_1+f$, $c=gt$, and $d=gb_1+h$. It follows that $t\mid c$, and that 
$$\varphi_1=\begin{pmatrix}a+b_1t^{-1}c & tb+(d-a)b_1-b_1^2t^{-1}c \\ t^{-1}c& d-b_1t^{-1}c  \end{pmatrix}.$$

Similarly, if   $\varphi_2=\begin{pmatrix} e & f \\ g & h \end{pmatrix}$, then equality $x_2\varphi_1 = \varphi_2x_2$ yields $$\varphi_2=\begin{pmatrix}a+(b_1+b_2)t^{-1}c & b+t^{-1}((d-a)(b_1+b_2)-(b_1+b_2)^2t^{-1}c) \\ c& d-(b_1+b_2)t^{-1}c  \end{pmatrix},$$ and $t\mid (d-a)(b_1+b_2)-(b_1+b_2)^2t^{-1}c$.  

The rest of the proof for $\varphi_3$, $\varphi_4$, and $\varphi_5$ is analogous. We omit the details of elementary, but tedious computation.

In the general case of arbitrary $n$, the proof is almost the same as for $n=6$. The only thing left to note is that if $i\in (I^c \cap J^c)\cup (I\cap J)$, then $x_i$ is a scalar matrix (either identity or $t$ times identity), so from  $x_{i}\varphi_{i-1}=\varphi_{i}x_{i}$, it follows immediately that  $\varphi_{i-1}=\varphi_{i}$.
\end{proof}

\begin{rem}
Take $\varphi$ as in Proposition~\ref{lm:n6-hom}. The morphism $\varphi$ also satisfies the other six relations $\varphi_i y_{i+1}=y_{i+1}\varphi_{i+1}$. 
Indeed, if $x_{i+1}\varphi_i=\varphi_{i+1}x_{i+1}$, then if we multiply this equality by $y_{i+1}$ both from the left and right, we obtain $t \cdot \varphi_i y_{i+1}=t\cdot y_{i+1}\varphi_{i+1} $. Since $t$ is a regular element in $\mathbb C[[t]]$, after cancellation by $t$ we obtain $\varphi_i y_{i+1}=y_{i+1}\varphi_{i+1}$. 
Also, note that in order to prove that $\varphi$ is idempotent we only need to make sure that only one $\varphi_i$ is idempotent. If $\varphi_i$ is idempotent, then from $x_{i+1}\varphi_i=\varphi_{i+1}x_{i+1}$ when we multiply by $\varphi_{i+1}$ from the left, we have $\varphi_{i+1}x_{i+1}\varphi_i=\varphi^2_{i+1}x_{i+1}$, 
and multiplying by $\varphi_i$ from the right we get 
$\varphi_{i+1}x_{i+1}\varphi_i=x_{i+1}\varphi_i^2$. 
Then  $x_{i+1}\varphi^2_i=\varphi^2_{i+1}x_{i+1}$ and $x_{i+1}\varphi_i=\varphi^2_{i+1}x_{i+1}$ 
as $\varphi_i$ is idempotent, and subsequently, $\varphi_{i+1}x_{i+1}=\varphi^2_{i+1}x_{i+1}$, yielding $\varphi_{i+1}=\varphi^2_{i+1}$ after multiplication by $y_{i+1}$ from the right and cancellation by $t$. 
\end{rem}

We now give necessary and sufficient conditions for the module $\mathbb M(I,J)$ to be indecomposable. 

\begin{theorem} \label{t6}
{Let $I$ and $J$ be tightly 3-interlacing.}
The module $\MM(I,J)$ is indecomposable if and only if  $t\nmid b_{1}+b_{2}$, $t\nmid b_{3}+b_{4}$, and $t\nmid b_{5}+b_{6}$. 
\end{theorem}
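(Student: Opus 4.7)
The plan is to analyze the idempotents in the endomorphism ring of $\MM(I,J)$ via Proposition~\ref{lm:n6-hom}. That proposition parametrizes every endomorphism $\varphi$ by the four entries $(a,b,c,d)$ of $\varphi_{j_3}$, subject to $t\mid c$ together with two further divisibility constraints coming from $\varphi_{j_1}$ and $\varphi_{j_2}$. A short computation shows that any non-trivial idempotent in $M_2(\CC[|t|])$ (one that is neither $0$ nor $\mathrm{Id}$) must satisfy $a+d=1$ and $ad=bc$; combining this with $c(0)=0$ forces $a(0)d(0)=0$, hence $\{a(0),d(0)\}=\{0,1\}$, and in particular $d(0)-a(0)=\pm 1\neq 0$.

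For the direction $(\Leftarrow)$, suppose the three non-divisibility hypotheses hold and let $\varphi$ be a non-trivial idempotent. Reducing the constraint from $\varphi_{j_1}$ modulo $t$ gives
\[
(d(0)-a(0))\,(b_1+b_2)(0)=(b_1+b_2)(0)^{2}\,(c/t)(0),
\]
and the constraint from $\varphi_{j_2}$ yields the analogous identity with $b_1+b_2$ replaced by $b_1+b_2+b_3+b_4=-(b_5+b_6)$. Since $(b_1+b_2)(0)$, $(b_5+b_6)(0)$ and $d(0)-a(0)$ are all non-zero, both equations determine $(c/t)(0)$; equating them forces $(b_1+b_2)(0)=(b_1+b_2+b_3+b_4)(0)$, i.e.\ $(b_3+b_4)(0)=0$, contradicting $t\nmid b_3+b_4$. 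Hence no non-trivial idempotent exists and $\MM(I,J)$ is indecomposable.

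For $(\Rightarrow)$ we construct a non-trivial idempotent whenever some divisibility holds. Using $\sum_r b_r = 0$, the hypothesis splits into two configurations: (A) $t$ divides each of $b_1+b_2$, $b_3+b_4$, $b_5+b_6$, or (B) exactly one of these three is divisible by $t$. In case (A), taking $\varphi_{j_3}=\mathrm{diag}(0,1)$ works: both additional constraints of Proposition~\ref{lm:n6-hom} reduce to the given divisibilities, so $\varphi$ is a non-trivial idempotent. In case (B), say $t\mid b_1+b_2$ (the other two positions are symmetric), we have $(b_3+b_4)(0)=-(b_5+b_6)(0)\neq 0$, so we may set $\varphi_{j_3}=\begin{pmatrix}1&0\\c&0\end{pmatrix}$ with $c=-t/(b_3+b_4)(0)$; this is visibly idempotent and non-trivial, the $\varphi_{j_1}$-constraint holds automatically from $t\mid b_1+b_2$, and the $\varphi_{j_2}$-constraint reduces modulo~$t$ to the defining equation for $c$. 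In either case the non-trivial idempotent splits off a proper direct summand, proving decomposability.

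The main subtlety is not any single calculation but rather the book-keeping: one has to recognise that among the five formulas in Proposition~\ref{lm:n6-hom} only those at $\varphi_{j_1}$ and $\varphi_{j_2}$ impose genuine conditions beyond $t\mid c$, and that these two conditions, once reduced modulo $t$, can be read either as rigidity of $(c/t)(0)$ (giving indecomposability) or as the defining relation of a specific idempotent (giving decomposability).
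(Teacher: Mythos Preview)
Your proof is correct and follows essentially the same route as the paper: both use Proposition~\ref{lm:n6-hom} to reduce to the two divisibility constraints at $\varphi_{j_1}$ and $\varphi_{j_2}$, then for the forward direction construct an explicit rank-one idempotent in $M_2(\CC[|t|])$ and for the reverse direction show the constraints are incompatible with a non-trivial idempotent. The only cosmetic differences are that you invoke the trace/determinant conditions $a+d=1$, $ad=bc$ at the outset (the paper derives $t^2\mid c$ and $t\mid d-a$ first and then unpacks the idempotent equations), and in case~(A) you exhibit the diagonal idempotent $\mathrm{diag}(0,1)$ directly whereas the paper appeals to the earlier retraction argument showing $L_J$ splits off.
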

\begin{proof}
As before, it is sufficient to consider the case $n=6$, so we can assume that $I=\{1,3,5\}$ and $J=\{2,4,6
\}$. Let $\varphi=( \varphi_i)_{i=1}^6\in$ Hom$(\MM(I,J),\MM(I,J))$ be an idempotent homomorphism and assume that $\varphi_{0}=\begin{pmatrix}a & b \\ c & d  \end{pmatrix}$. From the previous proposition we know that 
\begin{align}
t&\mid c,  \label{eq:1}\\
  t&\mid (d-a)(b_1+b_2)-(b_1+b_2)^2t^{-1}c, \label{eq:2}\\
  t&\mid (d-a)(b_1+b_2+b_3+b_4)-(b_1+b_2+b_3+b_4)^2t^{-1}c. \label{eq:3}
\end{align} 
Assume that $t\nmid b_1+b_2$, $t\nmid b_3+b_4$, and $t\nmid b_5+b_6$. Since $t\nmid b_1+b_2$, it follows from relation (\ref{eq:2}) that $$t\mid d-a-(b_1+b_2)t^{-1}c.$$ Similarly, since $t\nmid b_5+b_6=-(b_1+b_2+b_3+b_4)$, from relation (\ref{eq:3}) follows that  $$t\mid d-a-(b_1+b_2+b_3+b_4)t^{-1}c.$$ Thus, it must hold that $$t\mid (b_3+b_4)t^{-1}c,$$ and since $t\nmid b_3+b_4$, it must be that $t\mid t^{-1}c$, and subsequently that $t\mid d-a$.  

From the fact that $\varphi_0$ is idempotent and $t\mid c$ it follows that $t\mid a-a^2$ and $t\mid d-d^2$. Also, from $\varphi_0^2=\varphi_0$ it follows that either $a=d$ or $a+d=1$.  If $a=d$, then $b=c=0$ (otherwise $a=d=\frac 12$ and $\frac 14=bc$, which is not possible as $c$ is divisible by $t$), and $a=d=1$ or $a=d=0$ giving us the trivial idempotents. If $a+d=1$, then $t\mid a$ or $t\mid d$. Taking into account that $t\mid d-a$, we conclude that $t\mid a$ and $t\mid d$. This implies that $1=a+d$ is divisible by $t$, which is not true. Thus, the only idempotent homomorphisms of $\MM(I,J)$ are the trivial ones. Hence, $\MM(I,J)$ is indecomposable. 

Assume that $t$ divides at least one of the elements $b_1+b_2$, $b_3+b_4$, $b_5+b_6$. If $t$ divides all three of them, we have seen before that $\MM(I,J)$ is the direct sum of $L_I$ and $L_J$, hence it is a decomposable module. The remaining case is when one of $b_1+b_2$, $b_3+b_4$, $b_5+b_6$ is divisible by $t$ and the other two are not divisible by $t$. Note that it is not possible that two of them are divisible by $t$, and one is not, because they sum up to zero. 

Let $t\mid b_5+b_6$, $t\nmid b_1+b_2$, and $t\nmid b_3+b_4$. In order to find a non-trivial idempotent $\varphi$, note that the relation (\ref{eq:3}) holds because $t\mid b_5+b_6=-(b_1+b_2+b_3+b_4)$. Hence, we only need to find elements $a$, $b$, $c$, and $d$ in such a way that $t\mid c$ and $t\mid d-a-(b_1+b_2)t^{-1}c$. Recall that if $a=d$, then we only obtain the trivial idempotents because $t\mid c$. So it must be $a+d=1$ if we want to find a non-trivial idempotent. If we choose $a=1$, $d=0$, then $t\mid 1+(b_1+b_2)t^{-1}c$. Thus   $(b_1+b_2)t^{-1}c=-1+tg,$  for some $g$. We can take $g=0$, i.e.\ $c=t(b_1+b_2)^{-1}$ (recall that $b_1+b_2$ is invertible because $t\nmid b_1+b_2$) giving us the idempotent ($b=0$ since $a-a^2=bc$ and $c\neq 0$)  $$\varphi_0=\begin{pmatrix}
1& 0\\
-t(b_1+b_2)^{-1}&0
\end{pmatrix}.$$ 
 Its orthogonal complement is the idempotent
 $$
 \begin{pmatrix}
0& 0\\
t(b_1+b_2)^{-1}&1
\end{pmatrix}.$$ 
Since these are non-trivial idempotents, it follows that the module $\MM(I,J)$ is decomposable, what we needed to prove. 
\end{proof}

\begin{ex}\label{ex:rank2} 
Let $k=3$, $n=6$, 
$I=\{1,3,5\}$ and $J=\{2,4,6\}$. If $b_1=-2$, $b_2=0$, $b_3=0$, $b_4=1$, $b_5=-1$, and $b_6=2$, then 
it is easily checked that $t\nmid b_i+b_{i+1}$, $i=1,3,5$, and that $\sum_{i=1}^6b_i=0$, thus giving us an indecomposable   {$B_{k,n}$-}module: 
\begin{center}\hfil
\xymatrix{
6\ar@<.8ex>[rr]^{\begin{pmatrix} t & -2 \\ 0 & 1 \end{pmatrix}}
&&1 \ar@<.8ex>[rr]^{\begin{pmatrix} 1 & 0 \\ 0 & t \end{pmatrix}} \ar@<.8ex>[ll]^{\begin{pmatrix} 1 & 2 \\ 0 & t \end{pmatrix}} 
&& 2  \ar@<.8ex>[rr]^{\begin{pmatrix} t & 0 \\ 0 & 1 \end{pmatrix}}\ar@<.8ex>[ll]^{\begin{pmatrix} t & 0 \\ 0 & 1 \end{pmatrix}} 
&& 3  \ar@<.8ex>[rr]^{\begin{pmatrix} 1 & 1 \\ 0 & t \end{pmatrix}} \ar@<.8ex>[ll]^{\begin{pmatrix} 1 & 0 \\ 0 & t \end{pmatrix}}
&&4 \ar@<.8ex>[rr]^{\begin{pmatrix} t & -1 \\ 0 & 1 \end{pmatrix}} \ar@<.8ex>[ll]^{\begin{pmatrix} t & -1 \\ 0 & 1 \end{pmatrix}}
&&5 \ar@<.8ex>[rr]^{\begin{pmatrix} 1 & 2 \\ 0 & t \end{pmatrix}} \ar@<.8ex>[ll]^{\begin{pmatrix} 1 & 1 \\ 0 & t \end{pmatrix}}
&&6\,\,\, \ar@<.8ex>[ll]^{\begin{pmatrix} t & -2 \\ 0 & 1 \end{pmatrix}}
}
\hfill
\end{center}

The lattice diagram (showing only the rims) of $M=L_{135}\mid L_{246}$ is 
\[
\includegraphics[width=6cm]{135-246}
\]
\end{ex}

\begin{prop}\label{sumica} If $t\nmid b_{1}+b_{2}$, $t\nmid b_{3}+b_{4}$, and $t\mid b_{5}+b_{6}$, then $\MM(I,J)$ is isomorphic to $L_{\{i_1,j_1,i_3\}\cup(I\cap J)}\oplus L_{\{i_2,j_2, j_3\}\cup(I\cap J)}.$ 
\end{prop}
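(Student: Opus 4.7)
The plan is to extract the two summands from the non-trivial idempotent $\varphi$ of $\mathbb M(I,J)$ exhibited in the decomposability half of the proof of Theorem~\ref{t6}. Under the hypothesis $t\nmid b_1+b_2$, $t\nmid b_3+b_4$, $t\mid b_5+b_6$, that argument produces the idempotent whose value at vertex $j_3$ is
\[
\varphi_{j_3} = \begin{pmatrix} 1 & 0 \\ -t(b_1+b_2)^{-1} & 0 \end{pmatrix}.
\]
First, I would apply Proposition~\ref{lm:n6-hom} with $(a,b,c,d)=(1,0,-t(b_1+b_2)^{-1},0)$ to obtain explicit formulas for $\varphi_i$ at every remaining vertex. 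Because $\varphi$ is a non-trivial idempotent and rank is additive on short exact sequences, one has
\[
\mathbb M(I,J) = \operatorname{im}\varphi \,\oplus\, \ker\varphi,
\]
with each summand a Cohen-Macaulay $B_{k,n}$-module of rank $1$.

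By \cite[Proposition~5.2]{JKS16}, each summand is isomorphic to $L_K$ for a unique $k$-subset $K$, so the task reduces to identifying these two subsets. I would do this by choosing, at each vertex $i$, $Z$-basis elements $w_i^{(1)}$ of $\ker\varphi_i$ and $w_i^{(2)}$ of $\operatorname{im}\varphi_i$, and reading off, for every arrow $x_i\colon V_{i-1}\to V_i$, the scalar $c_i^{(\ell)}\in Z$ with $x_i w_{i-1}^{(\ell)} = c_i^{(\ell)} w_i^{(\ell)}$. After the necessary rescalings of the $w_i^{(\ell)}$ so that each $c_i^{(\ell)}\in\{1,t\}$, the rim of the $\ell$-th summand is precisely the set of $i$ for which $c_i^{(\ell)}=1$.

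At every vertex $i\in (I\cap J)\cup (I^c\cap J^c)$ the matrix $x_i$ is already scalar, so the two summands inherit the $(I\cap J)$ part of the rim from $\mathbb M(I,J)$ automatically. The content of the proof therefore lives at the six vertices $i_1,j_1,i_2,j_2,i_3,j_3$, where one uses Proposition~\ref{lm:n6-hom} to write down eigenvectors adapted to $\varphi_i$. For instance, at $j_3$ one takes $w_{j_3}^{(1)}=(0,1)^T$ and $w_{j_3}^{(2)}=(1,-t(b_1+b_2)^{-1})^T$, while at $i_1$ the corresponding vectors are $w_{i_1}^{(1)}=(b_1,1)^T$ and $w_{i_1}^{(2)}=(-b_2,1)^T$; tracking these around the cycle and computing the $c_i^{(\ell)}$ would show that $\ker\varphi\cong L_{\{i_1,j_1,i_3\}\cup(I\cap J)}$ and $\operatorname{im}\varphi\cong L_{\{i_2,j_2,j_3\}\cup(I\cap J)}$.

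The main obstacle is the combinatorial bookkeeping: at each of the six distinguished vertices the natural choice of eigenvector may differ from $w_{i-1}^{(\ell)}$ by a unit times a power of $t$, and one must rescale so that $c_i^{(\ell)}$ is exactly $1$ or $t$. The divisibility condition $t\mid b_5+b_6$ plays the decisive role here: it is precisely what forces the eigenvector propagation across the third box (between $i_3$ and $j_3$) to return to the chosen basis at $j_3$ with matching powers of $t$, so that the total products $\prod_i c_i^{(\ell)}$ equal $t^{n-k}$ on each summand, consistent with the relation $x^k=y^{n-k}$. The failure of the other two divisibilities ensures that the eigenvector propagation across boxes $1$ and $2$ is nondegenerate, which is what produces the asymmetric partition of the non-common indices into $\{i_1,j_1,i_3\}$ and $\{i_2,j_2,j_3\}$.
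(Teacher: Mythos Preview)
Your proposal is correct and is essentially the same argument as the paper's: both start from the idempotent $\varphi_{j_3}=\begin{pmatrix}1&0\\-t(b_1+b_2)^{-1}&0\end{pmatrix}$ produced in Theorem~\ref{t6}, propagate it to all vertices via Proposition~\ref{lm:n6-hom}, pick explicit eigenvectors for $\varphi_i$ and $1-\varphi_i$ (equivalently, bases of $\operatorname{im}\varphi_i$ and $\ker\varphi_i$), and read off the rims from the scalars by which $x_i$ acts. The paper carries out the verification by direct computation of $x_i v_{i-1}$ for the six distinguished arrows rather than discussing rescalings abstractly, but the content and the resulting identification of the summands are identical.
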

\begin{proof} In the proof of the previous theorem we constructed a non-trivial idempotent endomorphism $$\varphi_0=\begin{pmatrix}
1& 0\\
-t(b_1+b_2)^{-1}&0
\end{pmatrix}.$$ 
 It remains to show that $\MM(I,J)\cong L_{\{i_1,j_1,i_3\}}\oplus L_{\{i_2,j_2, j_3\}}.$  We know that $\MM(I,J)$ is the direct sum of rank 1 modules $L_X$ and $L_Y$ for some $X$ and $Y$. Let us determine $X$ and $Y$. For this, we take, at vertex $i$,  eigenvectors $v_i$ and $w_i$  corresponding to the eigenvalue $1$ of  the idempotents $\varphi_i$ and $1-\varphi_i$ respectively.  For example, $v_0=[1\,\, ,\,\, -t(b_1+b_2)^{-1}]^t$, $w_0=[0\,\, ,\,\, 1]^t$, $v_1= [ 1-b_1(b_1+b_2)^{-1}  \,\, ,\,\, -(b_1+b_2)^{-1} ]^t$ and  $w_1= [b_1  \,\, , \,\, 1  ]^t$, and so on. A basis for $L_X$ is $\{v_i \mid i=0,\ldots, 5\}$, and a basis for $L_Y$ is $\{w_i \mid i=0,\ldots, 5\}$. Direct computation gives us that $x_1v_0=tv_1$, $x_2v_1=tv_2$, $x_3v_2=v_3$, $x_4v_3=v_4$, $x_5v_4=tv_5$, and $x_6v_5=v_0$. Thus, $X=\{3,4,6\}$. Analogously, $Y=\{1,2,5\}.$  In the general case, it is easily seen that this means that $X=\{i_2,j_2,j_3\}\cup(I\cap J)$ and $Y=\{i_1,j_1,i_3\}\cup(I\cap J)$ because $x_i$ is a scalar matrix for $i\in (I\cap J)\cup(I^c\cap J^c)$.
\end{proof}

\begin{rem} If $n=6$, in the case when $t\nmid b_1+b_2$, $t\nmid b_3+b_4$, and $t\mid b_5+b_6$, if we just rename the vertices of the quiver by adding 2  (modulo $n$) to every vertex or by adding 4 to every vertex, we obtain two modules that correspond to the cases when $t\mid b_1+b_2$, $t\nmid b_3+b_4$, $t\nmid b_5+b_6$  and $t\nmid b_1+b_2$, $t\mid b_3+b_4$, $t\nmid b_5+b_6$. In the general case, these two modules are direct sums $L_{\{i_1, i_2,j_2\}\cup(I\cap J)}\oplus L_{\{j_1, i_3,j_3\}\cup(I\cap J)}$  and $L_{\{i_2, i_3,j_3\}\cup(I\cap J)}\oplus L_{\{i_1,j_1, j_2\}\cup(I\cap J)}.$ 
\end{rem}

\begin{ex}\label{ex:3-6-through}
When $n=6$, $I=\{1,3,5\}$, and $J=\{2,4,6\}$, an indecomposable module which has $L_J$ as a submodule and $L_I$ as a quotient module is given in Example \ref{ex:rank2}. 
Also, there are four different decomposable modules appearing as the middle term in a short exact sequence that has  $L_I$ (as a quotient) and $L_J$ (as a submodule) as end terms: 
\begin{align*}
0\longrightarrow L_J\longrightarrow L_{\{1, 3, 5\}}\oplus L_{\{2,4, 6\}} \longrightarrow L_I \longrightarrow 0,\\
0\longrightarrow L_J\longrightarrow L_{\{1, 2, 5\}}\oplus L_{\{3,4, 6\}} \longrightarrow L_I \longrightarrow 0,\\
0\longrightarrow L_J\longrightarrow L_{\{1, 3, 4\}}\oplus L_{\{2,5, 6\}} \longrightarrow L_I \longrightarrow 0,\\
0\longrightarrow L_J\longrightarrow L_{\{1, 2, 4\}}\oplus L_{\{3,5, 6\}} \longrightarrow L_I \longrightarrow 0.
\end{align*}
The profiles of the four modules that appear in the middle in these short exact sequences are illustrated in 
Figure~\ref{ex:example2}.  {Note that the two rims now stand for two rank 1 modules which are direct summands 
of the module. The pictures each show two lattice diagrams which are overlaid so we can compare the positions 
of the peaks. In particular, the two lattice diagrams in (a) look like the lattice diagram of the indecomposable extension 
described in Example~\ref{ex:rank2}.}

\begin{figure}[H]
\begin{center}
\subfloat[$L_{\{1, 3, 5\}}\oplus L_{\{2,4, 6\}}$]{\includegraphics[width = 6cm]{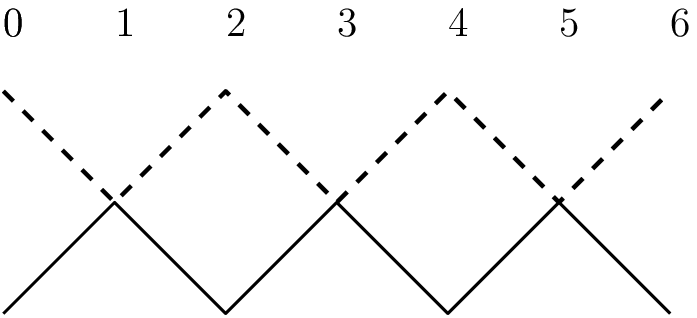}}  \quad \quad \quad \quad
\subfloat[$L_{\{1, 2, 5\}}\oplus L_{\{3,4, 6\}}$]{\includegraphics[width = 6cm ]{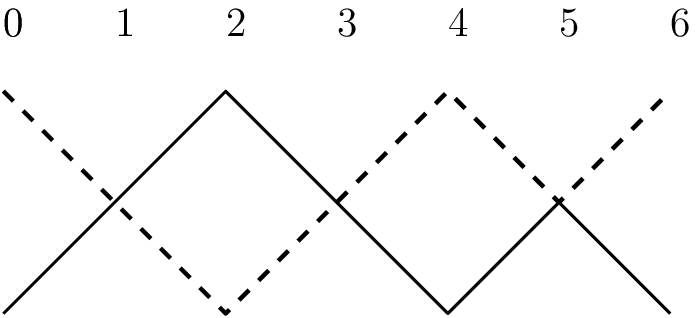}}\\
\subfloat[$L_{\{1, 3, 4\}}\oplus L_{\{2,5, 6\}}$]{\includegraphics[width = 6cm ]{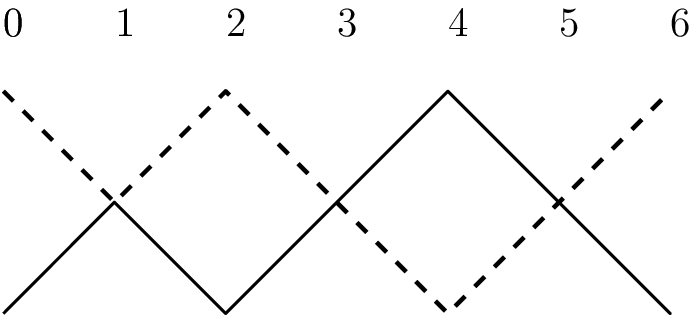}}  \quad \quad \quad \quad
\subfloat[$L_{\{1, 2, 4\}}\oplus L_{\{3,5, 6\}}$]{\includegraphics[width = 6cm ]{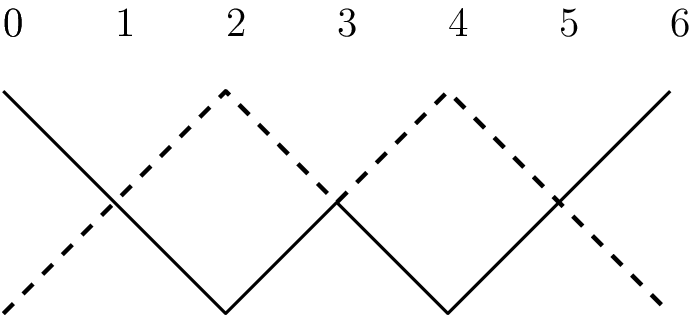}} 
\caption{The {pairs of lattice diagrams} 
of decomposable extensions between $ L_{\{1, 3, 5\}}$ and $L_{\{2,4, 6\}}$.}
\label{ex:example2}
\end{center}
\end{figure}
\end{ex}

\begin{rem} 
Note that for $I=\{1,3,5\}$ and $J=\{2,4,6\}$ there is a non-trivial extension 
$$
\xymatrix{0\ar@{->}[r]&L_J\ar@{->}[rr]^{[\id, -f_1]^t\,\,\,\,\,\,\,\,\,\,\,\,\,\,\,\,\,}&& L_{\{1, 3, 5\}}\oplus L_{\{2,4, 6\}} \ar@{->}[rr]^{\,\,\,\,\,\,\,\,\,\,\,\,\,\,\,\,\,\,\,\,\,\,\,\,\,[f_2, f_3]} && L_I \ar@{->}[r]& 0,}
$$ where $f_i$, $i=1,2,3$,  is the canonical map between rank $1$ modules 
{(see Section~\ref{sec:prelims}, this is a homomorphism of minimal codimension)}. 
The middle term is equal to the direct sum of the end terms, but the maps make this short exact sequence not isomorphic to the trivial sequence. 
\end{rem}

Now we turn our attention to the question of uniqueness of the constructed indecomposable module. If we choose a different set of values for $b_i$, i.e.\ we choose a  6-tuple different from $(-2,0,0,1,-1,2)$,  so that  the conditions $t\nmid  b_3+b_4$, $t\nmid b_5+b_6$, and $t\nmid b_1+b_2$ are fulfilled, then we obtain a module which is not the same as the above constructed module $\mathbb M(I,J)$ in Example \ref{ex:rank2}. In the next theorem, we will  show  directly that for different choices of 6-tuples giving us indecomposable modules with the same filtration $L_I \mid L_J$ we obtain isomorphic modules. Thus, there is a unique indecomposable module with filtration $L_I \mid L_J$ .
\begin{theorem} \label{tiso}
Let $(b_{1}, b_{2}, b_{3}, b_{4}, b_{5}, b_{6} )$ and $(c_{1}, c_{2}, c_{3}, c_{4}, c_{5}, c_{6} )$ be different $6$-tuples corresponding to indecomposable modules $M_1$ and $M_2$, respectively, as constructed 
{in Theorem~\ref{t6}}. 
Then the modules $M_1$ and $M_2$ are isomorphic. 
\end{theorem}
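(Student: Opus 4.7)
The plan is to construct an explicit isomorphism $\varphi = (\varphi_i)_{i=0}^{n-1}$ between $M_1$ and $M_2$ by mimicking the computation of Proposition~\ref{lm:n6-hom}, now for a morphism between two different modules rather than an endomorphism of a single one. At vertices in $(I\cap J)\cup(I^c\cap J^c)$ the transition matrices are scalar for both modules, forcing $\varphi_i = \varphi_{i-1}$, so the data reduces to the six special vertices $i_1,j_1,i_2,j_2,i_3,j_3$, and it suffices to treat $n=6$.

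I parametrize $\varphi_0 = \begin{pmatrix} a & b \\ c & d\end{pmatrix}$ and propagate through the six commutativity relations $x_i^{M_2}\varphi_{i-1} = \varphi_i x_i^{M_1}$, in which the $M_2$-matrix carries $c_i$ and the $M_1$-matrix carries $b_i$. The bookkeeping of Proposition~\ref{lm:n6-hom} adapts directly and shows that all $\varphi_i$ have entries in $\CC[|t|]$ precisely when $t\mid c$ together with
\begin{equation*}
    T_r d - \Sigma_r a - \Sigma_r T_r\,t^{-1}c \equiv 0 \pmod{t}, \qquad r=1,2,
\end{equation*}
where $\Sigma_r = b_1+\cdots+b_{2r}$ and $T_r=c_1+\cdots+c_{2r}$. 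Because $\Sigma_3=T_3=0$, the analogous $r=3$ condition is automatic, and the same cancellation yields cyclicity $\varphi_n=\varphi_0$ for free.

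Writing $\sigma_r = b_{2r-1}+b_{2r}$ and $\tau_r = c_{2r-1}+c_{2r}$ (so $\Sigma_2=-\sigma_3$ and $T_2=-\tau_3$), the indecomposability of $M_1$ and $M_2$ (Theorem~\ref{t6}) guarantees $\sigma_r(0)\ne 0$ and $\tau_r(0)\ne 0$ for $r=1,2,3$. I propose the explicit choice
\begin{equation*}
    a = \sigma_2(0)\tau_1(0)\tau_3(0),\quad d = \sigma_1(0)\sigma_3(0)\tau_2(0),\quad c = t\bigl(\sigma_1(0)\tau_3(0)-\sigma_3(0)\tau_1(0)\bigr),\quad b=0.
\end{equation*}
Substituting into the $r=1$ condition and factoring out $\sigma_1(0)\tau_1(0)$ reduces it (modulo $t$) to $\sigma_3(\tau_1+\tau_2)-\tau_3(\sigma_1+\sigma_2) = -\sigma_3\tau_3+\sigma_3\tau_3=0$, using $\sigma_1+\sigma_2+\sigma_3=\tau_1+\tau_2+\tau_3=0$; the $r=2$ case is symmetric. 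Invertibility of $\varphi_0$ is immediate because the constant term of $\det\varphi_0$ is $ad = \sigma_1(0)\sigma_2(0)\sigma_3(0)\tau_1(0)\tau_2(0)\tau_3(0)\ne 0$. Since at each vertex the $M_1$- and $M_2$-transition matrices share a common determinant, taking determinants in the commutativity relation gives $\det\varphi_i = \det\varphi_0$ for every $i$, so every $\varphi_i$ is invertible and $\varphi$ is the required isomorphism.

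The propagation itself is a direct analogue of Proposition~\ref{lm:n6-hom}; the substantive point, and the one likely to make the bookkeeping delicate, is that the three apparent constraints at $j_1,j_2,j_3$ collapse to exactly two thanks to $\sum\sigma_r=\sum\tau_r=0$, and that indecomposability supplies just enough non-vanishing $\sigma_r(0),\tau_r(0)$ to solve these two constraints with an invertible $\varphi_0$.
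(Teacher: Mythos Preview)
Your proof is correct and follows essentially the same strategy as the paper's: propagate a putative isomorphism from $\varphi_0$ via the commutativity relations, observe that the integrality constraints reduce to $t\mid c$ plus two mod-$t$ conditions (the third being automatic from $\sum\sigma_r=\sum\tau_r=0$), and then exhibit an explicit invertible $\varphi_0$ satisfying them. The only difference is in the particular solution chosen: the paper sets the two constraint expressions to be \emph{exactly} zero and solves over $\CC[|t|]$, obtaining $\delta_0=1$, $\alpha_0=\dfrac{(c_1+c_2)(b_3+b_4)(c_5+c_6)}{(b_1+b_2)(c_3+c_4)(b_5+b_6)}$, and a corresponding $\gamma_0$, whereas you take constant values $a,d,c\in\CC$ and verify the constraints only mod $t$, which is all that is required. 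Your determinant argument for the invertibility of every $\varphi_i$ is the same observation the paper makes.
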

\begin{proof}
As before, it is sufficient to consider the $n=6$ case. We will explicitly construct an isomorphism $\varphi=(\varphi_i)_{i=0}^5$ between the two modules, where $\varphi_i: V_i\longrightarrow W_i$, and $V_i$ and $W_i$ are the vector spaces at vertex $i$ of the modules $M_1$ and $M_2$ respectively.  Also, each $\varphi_i$ is invertible. Recall that $b_i$ (resp.\ $c_i$) is the right upper corner element of $x_i$ for the module $M_1$ (resp.\ $M_2$): 
\begin{center}
\xymatrix{
0\ar@<.8ex>[rr]^{\begin{pmatrix} t & b_1 \\ 0 & 1 \end{pmatrix}} \ar@<.1ex>[ddd]_{\varphi_0}
&&1 \ar@<.8ex>[rr]^{\begin{pmatrix} 1 & b_2 \\ 0 & t \end{pmatrix}} \ar@<.8ex>[ll]^{\begin{pmatrix} 1 & -b_1 \\ 0 & t \end{pmatrix}} \ar@<.1ex>[ddd]_{\varphi_1}
&& 2  \ar@<.8ex>[rr]^{\begin{pmatrix} t & b_3 \\ 0 & 1 \end{pmatrix}}\ar@<.8ex>[ll]^{\begin{pmatrix} t & -b_2 \\ 0 & 1 \end{pmatrix}} \ar@<.1ex>[ddd]_{\varphi_2}
&& 3  \ar@<.8ex>[rr]^{\begin{pmatrix} 1 & b_4 \\ 0 & t \end{pmatrix}} \ar@<.8ex>[ll]^{\begin{pmatrix} 1 & -b_3 \\ 0 & t \end{pmatrix}} \ar@<.1ex>[ddd]_{\varphi_3}
&&4 \ar@<.8ex>[rr]^{\begin{pmatrix} t & b_5 \\ 0 & 1 \end{pmatrix}} \ar@<.8ex>[ll]^{\begin{pmatrix} t & -b_4 \\ 0 & 1 \end{pmatrix}} \ar@<.1ex>[ddd]_{\varphi_4}
&&5 \ar@<.8ex>[rr]^{\begin{pmatrix} 1 & b_6 \\ 0 & t \end{pmatrix}} \ar@<.8ex>[ll]^{\begin{pmatrix} 1 & -b_5 \\ 0 & t \end{pmatrix}} \ar@<.1ex>[ddd]_{\varphi_5}
&&6 \ar@<.8ex>[ll]^{\begin{pmatrix} t & -b_6 \\ 0 & 1 \end{pmatrix}}\ar@<.1ex>[ddd]_{\varphi_0}\\
&&&&&&&&&&\\
&&&&&&&&&&\\
0\ar@<.8ex>[rr]^{\begin{pmatrix} t & c_1 \\ 0 & 1 \end{pmatrix}}
&&1 \ar@<.8ex>[rr]^{\begin{pmatrix} 1 & c_2 \\ 0 & t \end{pmatrix}} \ar@<.8ex>[ll]^{\begin{pmatrix} 1 & -c_1 \\ 0 & t \end{pmatrix}} 
&& 2  \ar@<.8ex>[rr]^{\begin{pmatrix} t & c_3 \\ 0 & 1 \end{pmatrix}}\ar@<.8ex>[ll]^{\begin{pmatrix} t & -c_2 \\ 0 & 1 \end{pmatrix}} 
&& 3  \ar@<.8ex>[rr]^{\begin{pmatrix} 1 & c_4 \\ 0 & t \end{pmatrix}} \ar@<.8ex>[ll]^{\begin{pmatrix} 1 & -c_3 \\ 0 & t \end{pmatrix}}
&&4 \ar@<.8ex>[rr]^{\begin{pmatrix} t & c_5 \\ 0 & 1 \end{pmatrix}} \ar@<.8ex>[ll]^{\begin{pmatrix} t & -c_4 \\ 0 & 1 \end{pmatrix}}
&&5 \ar@<.8ex>[rr]^{\begin{pmatrix} 1 & c_6 \\ 0 & t \end{pmatrix}} \ar@<.8ex>[ll]^{\begin{pmatrix} 1 & -c_5 \\ 0 & t \end{pmatrix}}
&&0 \ar@<.8ex>[ll]^{\begin{pmatrix} t & -c_6 \\ 0 & 1 \end{pmatrix}}
}
\end{center}

Let us assume that $\varphi_i=\begin{pmatrix}\alpha_i & \beta_i \\ \gamma_i & \delta_i  \end{pmatrix}$, for $i=0,\ldots, 5$.  Then from $\varphi_1 \begin{pmatrix} t & b_1 \\  0 & 1  \end{pmatrix}=\begin{pmatrix}t & c_1 \\  0 & 1  \end{pmatrix} \varphi_0$, we obtain $t\mid \gamma_0$, $t \gamma_1= \gamma_0$, $\alpha_1=\alpha_0+c_1t^{-1}\gamma_0$, $\beta_1=\beta_0t-\alpha_0b_1+c_1\delta_0-b_1c_1t^{-1}\gamma_0$, and $\delta_1=\delta_0-b_1t^{-1}\gamma_0$. Hence, $$\varphi_1=\begin{pmatrix} \alpha_0+c_1t^{-1}\gamma_0 & \beta_0t-\alpha_0b_1+c_1\delta_0-b_1c_1t^{-1}\gamma_0 \\ t^{-1}\gamma_0 & \delta_0-b_1t^{-1}\gamma_0 \end{pmatrix}.$$

Since $t\mid \gamma_0$ and we would like $\varphi_0$ to be invertible, then it must be that $t\nmid \alpha_0$ and $t\nmid \delta_0$. Then the inverse of $\varphi_0$ is $\frac{1}{\alpha_0\delta_0-\beta_0\gamma_0}\begin{pmatrix} \delta_0&-\beta_0 \\  -\gamma_0 & \alpha_0 \end{pmatrix}.$
From $\varphi_2 \begin{pmatrix} 1 & b_2 \\  0 & t  \end{pmatrix}=\begin{pmatrix}1 & c_2 \\  0 & t  \end{pmatrix} \varphi_1$, we obtain that $$\varphi_2=\begin{pmatrix} \alpha_0+(c_1+c_2)t^{-1}\gamma_0 & \beta_0+t^{-1}(-\alpha_0(b_1+b_2)+(c_1+c_2)\delta_0 -(b_1+b_2)(c_1+c_2)t^{-1}\gamma_0) \\  \gamma_0 & \delta_0-(b_1+b_2)t^{-1}\gamma_0 \end{pmatrix},$$ 
where $t\mid -\alpha_0(b_1+b_2)+(c_1+c_2)\delta_0 -(b_1+b_2)(c_1+c_2)t^{-1}\gamma_0.$

Analogously, it is easily computed that 
\begin{align*}
\varphi_3&=\begin{pmatrix} \alpha_0+(c_1+c_2+c_3)t^{-1}\gamma_0 & \beta_0t-\alpha_0(\sum_{i=1}^3b_i)+(\sum_{i=1}^3c_i)\delta_0-(\sum_{i=1}^3b_i)(\sum_{i=1}^3c_i)t^{-1}\gamma_0 \\ t^{-1}\gamma_0 & \delta_0-(b_1+b_2+b_3)t^{-1}\gamma_0 \end{pmatrix},\\
\varphi_4&=\begin{pmatrix} \alpha_0-(c_5+c_6)t^{-1}\gamma_0 & \beta_0+t^{-1}(\alpha_0(b_5+b_6)-(c_5+c_6)\delta_0 -(b_5+b_6)(c_5+c_6)t^{-1}\gamma_0) \\  \gamma_0 & \delta_0+(b_5+b_6)t^{-1}\gamma_0 \end{pmatrix},\\ 
\varphi_5&=\begin{pmatrix} \alpha_0-c_6t^{-1}\gamma_0 & \beta_0t+\alpha_0b_6-c_6\delta_0-b_6c_6t^{-1}\gamma_0 \\ t^{-1}\gamma_0 & \delta_0+b_6t^{-1}\gamma_0 \end{pmatrix},
\end{align*}
where $t\mid \alpha_0(b_5+b_6)-(c_5+c_6)\delta_0 -(b_5+b_6)(c_5+c_6)t^{-1}\gamma_0.$

In order to find an isomorphism $\varphi$, we must determine $\alpha_0$, $\beta_0$, $\gamma_0$, and $\delta_0$ satisfying the following conditions: $t\mid \gamma_0$, $t\nmid \alpha_0$, $t\nmid \delta_0$, and 
\begin{align*}
t&\mid -\alpha_0(b_1+b_2)+(c_1+c_2)\delta_0 -(b_1+b_2)(c_1+c_2)t^{-1}\gamma_0,\\
t&\mid\alpha_0(b_5+b_6)-(c_5+c_6)\delta_0 -(b_5+b_6)(c_5+c_6)t^{-1}\gamma_0.
\end{align*}
Note that  there are no conditions attached to $\beta_0$ so we set it to be 0.  If we set 
\begin{align*}
-\alpha_0(b_1+b_2)+(c_1+c_2)\delta_0 -(b_1+b_2)(c_1+c_2)t^{-1}\gamma_0&=0,\\
\alpha_0(b_5+b_6)-(c_5+c_6)\delta_0 -(b_5+b_6)(c_5+c_6)t^{-1}\gamma_0&=0,
\end{align*}
then we get 
$$\alpha_0(b_5+b_6)\left[1+\frac{c_5+c_6}{c_1+c_2} \right]-\delta_0(c_5+c_6)\left[1+\frac{b_5+b_6}{b_1+b_2} \right]=0.$$

If $t\mid 1+\frac{c_5+c_6}{c_1+c_2}$, then from  $\sum_{i=1}^6c_i=0$, we get $t\mid c_3+c_4$, which is not true.  The same holds for $1+\frac{b_5+b_6}{b_1+b_2}$, so both of these elements are invertible. Thus, if we set $\delta_0=1$, then we get $$\alpha_0=\frac{(c_1+c_2)(b_3+b_4)(c_5+c_6)}{(b_1+b_2)(c_3+c_4)(b_5+b_6)},$$ and 
$$\gamma_0=t \frac{(c_3+c_4)(b_5+b_6)-(b_3+b_4)(c_5+c_6)}{(b_1+b_2)(c_3+c_4)(b_5+b_6)}.$$
Hence, 
{\renewcommand\arraystretch{2.5}
$$\varphi_0=\begin{pmatrix} \displaystyle\frac{(c_1+c_2)(b_3+b_4)(c_5+c_6)}{(b_1+b_2)(c_3+c_4)(b_5+b_6)} & \,\,\,\,\,\,\,\, 0 \\  t \displaystyle\frac{(c_3+c_4)(b_5+b_6)-(b_3+b_4)(c_5+c_6)}{(b_1+b_2)(c_3+c_4)(b_5+b_6)} & \,\,\,\,\,\,\,\,1  \end{pmatrix}.$$ 
} 
The other invertible matrices $\varphi_i$ are now determined from the above equalities. Note that all of them are invertible because their determinant is equal to $\alpha_0\delta_0-\beta_0\gamma_0$ which is an invertible element.  Also, $\beta_0=\beta_2=\beta_4=0,$ and $\gamma_0=\gamma_2=\gamma_4=t\gamma_1=t\gamma_3=t\gamma_5.$
\end{proof}

\begin{ex} {\renewcommand\arraystretch{1.1} Assume that $n=6$. We use the previous theorem to construct an isomorphism between the modules corresponding to the $6$-tuples $(b_1, b_2, b_3, b_4, b_5, b_6 )=(-2,0,0,1,-1,2)$ and $(c_1, c_2, c_3, c_4, c_5, c_6 )=(0,1,-1,2,-2,0)$. From the previous theorem we get that $\varphi_0=\begin{pmatrix} 1 & 0 \\  -\frac32t & 1  \end{pmatrix}$, $\varphi_1=\varphi_3=\varphi_5=\begin{pmatrix} 1 & 2 \\  -\frac32 & -2\end{pmatrix}$, $\varphi_2=\begin{pmatrix} -\frac12 & 0 \\  -\frac32 t & -2  \end{pmatrix}$,  $\varphi_4=\begin{pmatrix} -2 & 0 \\  -\frac32t & -\frac12  \end{pmatrix}$. 
}
\end{ex}

\begin{rem} In the case when $t\nmid b_{1}+b_{2}$, $t\nmid  b_{3}+b_{4}$, and $t\mid b_{5}+b_{6}$, we have seen that the module in question is isomorphic to the direct sum $L_{\{i_1,j_1,i_3\}\cup(I\cap J)}\oplus L_{\{i_2,j_2, j_3\}\cup(I\cap J)}$.  This means that regardless of the choice of the elements $b_i$ that satisfy these conditions, we get {a module} 
that is isomorphic to the same direct sum of rank 1 modules. Obviously, the same holds when $t\mid b_{1}+b_{2}$, $t\mid  b_{3}+b_{4}$, and $t\mid b_{5}+b_{6}$, in which case we get the direct sum $L_I\oplus L_J$. Thus, once we know which divisibility conditions our coefficients fulfil, we immediately know which module we are dealing with. 
\end{rem}

\section{Almost tight $3$-interlacing}\label{sec:non-tight3}

In the tame case $(4,8)$, besides the indecomposable modules of rank 2 that we have already 
constructed, i.e.\ the modules whose layers $I$ and $J$ {are 3-interlacing and} satisfy the condition 
$|I\cap J|=k-3$, there {are also the cases of non-tightly $3$-interlacing layers with poset $1^3\mid 2$ and of 
4-interlacing layers (with poset $1^4\mid 2$). In this section, we deal 
with the former case. Recall that the two rims form three boxes in this case (Remark~\ref{rem:poset-boxes}). 
This happens exactly for pairs of subsets $J=\{i,i+2,i+4,i+5\}$, $I=\{i+1,i+3,i+6,i+7\}$, e.g.\ for 
the profile $2478\mid 1356$.} We prove that there is a unique indecomposable rank 2 module with filtration $I\mid J$. We will work in a more general setup which we explain now. 
 
 So we assume for now, for general $k$ and $n$, that $I$ and $J$ are $r$-interlacing for some $r\ge 3$ and that $I\mid J$ forms three boxes 
with poset $1^3\mid 2$.   
\begin{figure}[H]
\begin{center}
{\includegraphics[width = 8cm]{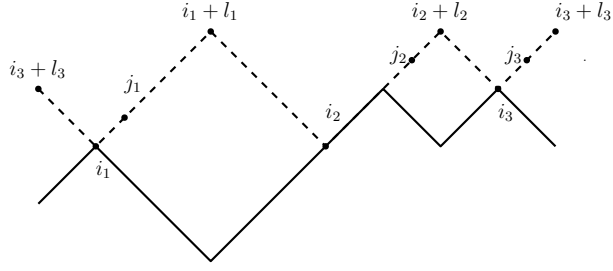}} 
\caption{The profile of  a  module  with $3$-interlacing layers, with poset $1^3\mid 2$, and with all boxes squares.} \label{fig81}
\end{center}
\end{figure}
We would like to construct an indecomposable rank 2 module 
$\MM(I,J)$ with $L_J$ as submodule and $L_I$ as quotient. 

As before, we define $x_{i}=\begin{pmatrix} t& b_{i} \\ 0 & 1 \end{pmatrix}$ and 
$y_{i}=\begin{pmatrix} 1& -b_{i} \\ 0 & t \end{pmatrix}$, if $i\in I\setminus J$,  and 
$x_{i}=\begin{pmatrix} 1& b_{i} \\ 0 & t \end{pmatrix}$ and  $y_{i}=\begin{pmatrix} t& -b_{i} \\ 0 & 1 \end{pmatrix}$ if 
$i\in J\setminus I$. 
For any other  $i$ we define $x_i$ to be the identity matrix and $y_i$ to be $t\cdot \id$ if $i\in I\cap J$, and 
$x_i=t\cdot \id$, $y_i=\id$ if $i\in I^c\cap J^c$. 
This gives an element of ${\rm CM}(B_{k,n})$, Proposition~\ref{prop:constr-works}, as we will explain now. 

In order to have a representation for $B_{k,n}$, our matrices have to satisfy the relation $x^k=y^{n-k}$. 
After multiplication by $x^{n-k}$ from the left, we conclude that this is equivalent to 
$x^n=t^{n-k}\cdot\id$ because $x_iy_i=t$. 
When we compute such a product $x_nx_{n-1}\cdots x_1$ we get the matrix 
$\begin{pmatrix} t^{n-k}& z \\ 0 & t^{n-k} \end{pmatrix}$, where $z$ is a linear combination of the coefficients $b_i$ 
{over the centre $Z$}. This linear combination must be zero if we want to have a $B_{k,n}$-module structure. 

\begin{rem}\label{parremoval}
{Note that for all $i\in(I\cap J)\cup (I^c\cap J^c)$, $x_i$ is equal to $\id$ or $t\id$, so in the product 
$x^n$, any such $x_i$ does not contribute to $z$, or more precisely, it cancels out in $x^n=t^{n-k}\cdot\id$. 
Therefore, in finding conditions for $z=0$, we will assume 
$(I\cap J)\cup (I^c\cap J^c)=\emptyset$, i.e.~that the rims of $L_I$ and of $L_J$ have no parallel segments and that 
$(k,n)$ are modified accordingly to $(k',n')$ for some $k'\le k$, $n'\le n$.  
This implies in particular that the boxes are symmetric around horizontal axes and that $n'=2k'$.}  
\end{rem}

We consider the product of all $x_i$ appearing in a single box. Since we have removed all common elements of 
$I$ and $J$ and of $I^c$ and $J^c$, the boxes are separated by the three points where the rims meet. We call them the three {\em branching points of $I\mid J$}. 
In other words, let $\{i_1,i_2,i_3\}\subset I$ be the positions where the arrow $x_{i_m}$ in the rim of $L_I$ 
ends at the position/height (in the lattice diagram) where the arrow $y_{i_m}$ starts in the rim of $L_J$, $m=1,2,3$. 
And let $\{j_1,j_2,j_3\}\subset J$ be defined as $j_m=i_m+1$. We call the $i_m$'s the branching points for $I$ and 
the $j_m$'s the branching points for $J$. (We will give the definition of branching points in the general setting later, 
cf.\ Definition~\ref{def:branching}.)

The profile $I \mid J$ of a rank 2 module with $6$-interlacing layers, with poset $1^3\mid 2$, and with $I\cap J=I^c\cap J^c=\emptyset$ 
is given in {Figure~\ref{fig:3boxes-i-j}}, for $(k,n)=(8,16)$. 
\begin{figure}[H]
\begin{center}
\includegraphics[width = 10cm]{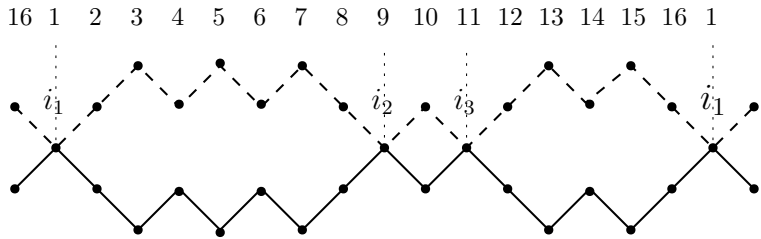} 
\caption{The profile of a module with $3$ branching points and $3$ junctions.} \label{fig:3boxes-i-j}
\end{center}
\end{figure}

Note that all the boxes now fit between consecutive branching points $i_m,i_{m+1}\in I$, but that there might be some
points, 
along the boundary of a box where the rims deviate from forming a square: considering the rim of $L_I$, 
these points are precisely the valleys of $I$ (and by symmetry, the peaks of $J$), i.e.\  points $i\in I$ (and thus $i\notin J$) such that $i+1\notin I$ (and thus $i+1\in J$). We call such a point a {\em junction of  $I$} (of $J$, by the symmetry). 
In Figure~\ref{fig:3boxes-i-j}, the first box has two junctions at 4 and at 6 in $I$. By definition, branching points 
are not junctions. 

For the remainder of this section we assume that $I\mid J$ is a profile whose boxes are squares, i.e., that there are no junctions.

\begin{figure}[H]
\begin{center}
{\includegraphics[width = 6cm]{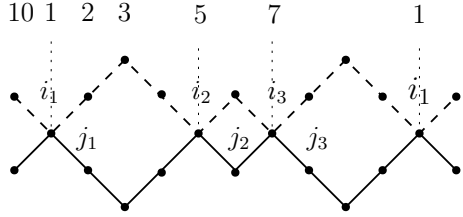}} 
\caption{A profile with squared boxes and no parallel lines.} 
\label{fig:3boxes-no-junctions}
\end{center}
\end{figure}

Consider the first box in the reduced setting (no parallel segments, only squared boxes), i.e.~the squared box 
with starting/ending points $i_1,i_2$. 
The size of the set $I\cap (i_1,i_2]$ is the same as the size of the set $J\cap (i_1,i_2]$. We 
call this number the {\em size} of the box,  and denote it by $l_1:=\frac{1}{2}|\{i_1+1,\dots, i_2\}|$, $l_2$ and $l_3$ are defined accordingly. Then the product of the matrices $x_{i_2}x_{i_2-1}\cdots x_{i_1+l_1+1}$ (see Figure \ref{fig81}) on the second half of the first box is  
$\begin{pmatrix} t^{l_1}&  b_{i_2}+b_{i_2-1}t+\dots + b_{i_1+l_1+1}t^{l_1-1} \\ 0 & 1 \end{pmatrix}.$  Note that the peaks of $I$ are $i_1+l_1,$ $i_2+l_2$, and $i_3+l_3$.

In the linear combination $b_{i_2}+b_{i_2-1}t+\dots + b_{i_1+l_1+1}t^{l_1-1}$ there is only one term that is potentially 
not divisible by $t$, the one corresponding to the branching point $i_2$. Denote the sum 
$b_{i_2}+b_{i_2-1}t+\dots + b_{i_1+l_1+1}t^{l_1-1}$ by $B_{i_2}$. 
On the first half of the box, the product of matrices $x_{i_1+l_1}x_{i_1+l_1-1}\cdots x_{i_1+1}$ is 
$\begin{pmatrix} 1&  b_{j_1}+b_{i_1+1}t+\dots + b_{i_1+l_1}t^{l_1-1} \\ 0 & t^{l_1} \end{pmatrix}.$ 
Note that $b_{j_1}$ is the only term potentially not divisible by $t$ in the sum 
$b_{j_1}+b_{j_1+1}t+\dots + b_{i_1+l_1}t^{l_1-1}$, which we denote by $B_{j_1}$. 
The product $x_{i_2}x_{i_2-1}\cdots x_{j_1}$ is thus equal to 
$\begin{pmatrix} t^{l_1}&  t^{l_1}(B_{j_1}+B_{i_2}) \\ 0 & t^{l_1} \end{pmatrix}.$  
Similarly, for the remaining two boxes we get that the corresponding products are (with the obvious analogous notation): 
$\begin{pmatrix} t^{l_2}&  t^{l_2}(B_{j_2}+B_{i_3}) \\ 0 & t^{l_2} \end{pmatrix}$  and 
$\begin{pmatrix} t^{l_3}& t^{l_3} (B_{j_3}+B_{i_1}) \\ 0 & t^{l_3} \end{pmatrix}.$ Therefore, the whole product 
$x^n$ is $\begin{pmatrix} t^{n-k}& t^{n-k} (B_{j_3}+B_{i_1}+B_{j_2}+B_{i_3}+B_{j_1}+B_{i_2} )\\ 0 & t^{n-k} \end{pmatrix}.$ 
The following proposition is now obvious. 

\begin{prop}\label{prop:constr-works}
If $B_{i_1}+ B_{i_2}+B_{i_3}+B_{j_1}+B_{j_2}+ B_{j_3} =0$, then $\MM(I,J)$ is a rank $2$ Cohen-Macaulay module.  
\end{prop}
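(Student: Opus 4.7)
The plan is to verify directly that the $B_{k,n}$-relations are satisfied at every vertex and that the module is free over the centre $Z=\mathbb{C}[[t]]$. The latter is immediate from the construction: at each vertex $i$ we set $V_i=\mathbb{C}[[t]]\oplus\mathbb{C}[[t]]$, so $\MM(I,J)$ is automatically a free $Z$-module of rank $2n$, hence Cohen-Macaulay of rank $2$ once the relations hold.

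First I would check the local relation $x_iy_i = y_ix_i = t\cdot\id$ at each vertex. This is a routine $2\times 2$ matrix computation, independent of how the $b_i$'s interact globally: for each of the four types of arrows ($i\in I\setminus J$, $i\in J\setminus I$, $i\in I\cap J$, $i\in I^c\cap J^c$) the upper-triangular shape forces the product to be $t\cdot\id$. With this in hand, the remaining preprojective-type relation $x^k = y^{n-k}$ is equivalent, after multiplying by $x^{n-k}$ (or $y^k$) and using $x_iy_i=t$, to the single identity $x^n = t^{n-k}\cdot\id$ at every vertex.

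Next I would compute $x^n$ at a fixed vertex, say just before the branching point $i_1$. The argument is already essentially done in the paragraphs preceding the proposition: the identity and scalar-$t$ blocks coming from $i\in (I\cap J)\cup(I^c\cap J^c)$ commute with everything and contribute only to the diagonal $t^{n-k}$-factor (this is the content of Remark \ref{parremoval}), so it suffices to multiply the matrices in the reduced, ``all-boxes-are-squares'' setting. Within each of the three squared boxes, the product telescopes to
\[
\begin{pmatrix} t^{l_m} & t^{l_m}(B_{j_m}+B_{i_{m+1}}) \\ 0 & t^{l_m} \end{pmatrix},
\]
as was computed above. Multiplying the three box contributions together (which commute up to scalar factors since each is upper-triangular with equal diagonal entries $t^{l_m}$), one obtains
\[
x^n \;=\; \begin{pmatrix} t^{n-k} & t^{n-k}\bigl(B_{i_1}+B_{i_2}+B_{i_3}+B_{j_1}+B_{j_2}+B_{j_3}\bigr) \\ 0 & t^{n-k} \end{pmatrix}.
\]
Under the hypothesis of the proposition, the off-diagonal entry vanishes and $x^n=t^{n-k}\cdot\id$, as required. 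Cyclic invariance of the trace of the product (or equivalently, starting the product at a different vertex) shows that the same identity holds at every vertex, so $x^k=y^{n-k}$ everywhere.

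The main obstacle I anticipate is purely bookkeeping rather than conceptual: one must be careful that the reduction ``throw away all $i\in(I\cap J)\cup(I^c\cap J^c)$'' really produces the same off-diagonal coefficient, and that the three box factors can indeed be multiplied in any order to give the stated product. Both are routine once one observes that each box contribution is of the form $t^{l_m}\bigl(\id + t^{-l_m}\cdot\text{(strictly upper triangular)}\bigr)$ and that upper-triangular $2\times 2$ matrices with equal diagonal entries commute up to a scalar. After these verifications, freeness over $Z$ and the relations together give $\MM(I,J)\in{\rm CM}(B_{k,n})$ of rank $2$.
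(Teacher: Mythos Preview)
Your proposal is correct and follows essentially the same approach as the paper: the computation of the box-by-box product and the reduction of $x^k=y^{n-k}$ to $x^n=t^{n-k}\cdot\id$ are exactly what the paper carries out in the paragraphs preceding the proposition, after which it simply declares the result ``obvious.'' Your write-up is slightly more explicit about why the identity propagates to every vertex (your cyclic argument, via multiplying by $y_i$ to cancel $x_i$), a point the paper leaves implicit; the minor slip is the phrase ``cyclic invariance of the trace,'' which is not quite the mechanism---rather, matrices of the form $\begin{pmatrix} a & * \\ 0 & a\end{pmatrix}$ genuinely commute, and a scalar matrix is central, so the product is the same at every starting vertex.
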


From now on we assume that $B_{i_1}+ B_{i_2}+B_{i_3}+B_{j_1}+B_{j_2}+ B_{j_3} =0$.

 We first determine necessary and sufficient conditions for the module $\MM(I,J)$ to be isomorphic to the direct sum $L_I\oplus L_J$. Assume first that $\mathbb M(I,J)$ is decomposable and that $L_J$ is a direct summand of $\mathbb M(I,J)$. Then there exists a retraction $\mu=(\mu_i)_{i=1}^n$ such that $\mu_i\circ \theta_i=\id$, where $(\theta_i)_{i=1}^n$ is the natural injection of $L_J$ into $\mathbb M(I,J)$.  Using the same basis as before, we can assume that $\mu_i=[1\,\, \alpha_i ]$ for some 
$\alpha_i\in \CC[[t]]$. From the commutativity relations we have $\id\circ \mu_i=\mu_{i+1}\circ x_{i+1}$ for $i+1\in J\setminus I$,  and  $t\cdot \id\circ \mu_i=\mu_{i+1}\circ x_{i+1}$ for $i+1\in I\setminus J$. It follows that 
$\alpha_i=b_{i+1}+t\alpha_{i+1}$ for $i+1\in J\setminus I$, and $t\alpha_i=b_{i+1}+\alpha_{i+1}$ for $i+1\in I\setminus J$. From this we have 
\begin{align*}
t^{l_3} \alpha_{i_3+l_3}-t^{l_1}\alpha_{i_1+l_1}&=B_{i_1}+B_{j_1},\\
t^{l_2} \alpha_{i_2+l_2}-t^{l_3}\alpha_{i_3+l_3}&=B_{i_3}+B_{j_3},\\
t^{l_1} \alpha_{i_1+l_1}-t^{l_2}\alpha_{i_2+l_2}&=B_{i_2}+B_{j_2}.
\end{align*}
It follows that $t^{\min\{{l_{g-1}, l_g}\}}\mid B_{i_g}+B_{j_g}$, for $g=1,2,3$. In the opposite direction, if $t^{\min\{{l_{g-1}, l_g}\}}\mid B_{i_g}+B_{j_g}$, for $g=1,2,3$, then we can easily find the coefficients $\alpha_{i_g+l_g}$, $g=1,2,3,$ so that the above equalities hold.  Assume, without loss of generality, that $l_3\geq l_2\geq l_1.$ Then by setting $\alpha_{i_3+l_3}=0$ we obtain $\alpha_{i_1+l_1}=-t^{-l_1}(B_{i_1}+B_{j_1})$ and $\alpha_{i_2+l_2}=t^{-l_2}(B_{i_3}+B_{j_3})$. The coefficient $\alpha_{i+1}$, where $i+1$ is not a peak of $I$, is determined directly from $\alpha_i=b_{i+1}+t\alpha_{i+1}$, when $i+1\in J\setminus I$, or $t\alpha_i=b_{i+1}+\alpha_{i+1}$, when $i+1\in I\setminus J$. This defines a retraction $\mu$, and thus proves the following proposition. 

\begin{prop}\label{sum}The module $\MM(I,J)$ is isomorphic to $L_I\oplus L_J$ if and only if $t^{\min\{{l_{g-1}, l_g}\}}\mid B_{i_g}+B_{j_g}$, for $g=1,2,3$.
 \end{prop}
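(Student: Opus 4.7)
The plan is to prove both implications by analysing retractions of the natural inclusion $L_J \hookrightarrow \MM(I,J)$: such a splitting exists if and only if $\MM(I,J) \cong L_I \oplus L_J$. In the upper-triangular basis fixed in Section~\ref{sec:tight-3}, any $B$-module retraction takes the form $\mu = (\mu_i)_{i \in C_0}$ with $\mu_i = [1,\alpha_i]$ and $\alpha_i \in \CC[[t]]$, and commutativity with each $x_{i+1}$ is equivalent to the two recurrences $\alpha_i = b_{i+1} + t\alpha_{i+1}$ when $i+1 \in J\setminus I$ and $t\alpha_i = b_{i+1} + \alpha_{i+1}$ when $i+1 \in I\setminus J$ (in the reduced setting of this section, every index $i+1$ lies in one of these two classes).

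Next I would iterate each recurrence across a single box to relate the values of $\alpha$ at the peak of $I$ and at the two adjacent branching points. Working backwards from the peak $i_g + l_g$ to the branching point $i_g$ passes through indices in $J\setminus I$ and yields $\alpha_{i_g} = B_{j_g} + t^{l_g}\alpha_{i_g+l_g}$; working forwards from the peak to $i_{g+1}$ passes through indices in $I\setminus J$ and yields $\alpha_{i_{g+1}} = t^{l_g}\alpha_{i_g+l_g} - B_{i_{g+1}}$. Eliminating the value at each branching point then produces the three cyclic box equations $t^{l_g}\alpha_{i_g+l_g} - t^{l_{g+1}}\alpha_{i_{g+1}+l_{g+1}} = B_{i_{g+1}}+B_{j_{g+1}}$ for $g=1,2,3$ (cyclically). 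For the forward implication, if $\MM(I,J)$ splits then such $\alpha$'s exist in $\CC[[t]]$, so each left-hand side lies in $t^{\min\{l_g,l_{g+1}\}}\CC[[t]]$, which is exactly the claimed divisibility $t^{\min\{l_{g-1},l_g\}} \mid B_{i_g}+B_{j_g}$.

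For the converse, assuming the divisibility conditions, I would cyclically reorder so that $l_3 \geq l_2 \geq l_1$, set $\alpha_{i_3+l_3} := 0$, and use two of the cyclic equations to define $\alpha_{i_1+l_1} := -t^{-l_1}(B_{i_1}+B_{j_1})$ and $\alpha_{i_2+l_2} := t^{-l_2}(B_{i_3}+B_{j_3})$; the hypotheses for $g=1$ and $g=3$ make both values integral. The remaining cyclic equation is then automatic because the global relation $\sum_g(B_{i_g}+B_{j_g}) = 0$ built into the definition of $\MM(I,J)$ (cf.~Proposition~\ref{prop:constr-works}) makes the three cyclic equations mutually consistent. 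From the three peak values I then extend $\alpha_i$ to every other vertex by propagating outward from each peak using the recurrences in their ``safe'' direction: going peak-outward, both recurrences express $\alpha_{i\pm 1}$ directly in terms of $\alpha_i$ via multiplication by $t$ and addition of scalars, so no further divisibility is needed and the resulting values all lie in $\CC[[t]]$. This defines a retraction $\mu$, hence $L_J$ is a direct summand and, since the quotient is $L_I$, $\MM(I,J) \cong L_I \oplus L_J$. The only real obstacle is the careful bookkeeping inside each box --- matching each step of the iteration to the correct arrow type and making sure the global identity $\sum B = 0$ is used exactly once to absorb the third cyclic equation.
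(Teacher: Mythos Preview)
Your argument is correct and follows essentially the same route as the paper: you study retractions $\mu_i=[1,\alpha_i]$ of the natural inclusion $L_J\hookrightarrow\MM(I,J)$, derive the same two local recurrences, iterate them across each box to obtain the three cyclic equations $t^{l_g}\alpha_{i_g+l_g}-t^{l_{g+1}}\alpha_{i_{g+1}+l_{g+1}}=B_{i_{g+1}}+B_{j_{g+1}}$, and solve them under the ordering $l_3\geq l_2\geq l_1$ with the same explicit choices $\alpha_{i_3+l_3}=0$, $\alpha_{i_1+l_1}=-t^{-l_1}(B_{i_1}+B_{j_1})$, $\alpha_{i_2+l_2}=t^{-l_2}(B_{i_3}+B_{j_3})$. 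Your write-up is in fact slightly more explicit than the paper's in two places: you record the intermediate values $\alpha_{i_g}=B_{j_g}+t^{l_g}\alpha_{i_g+l_g}$ and $\alpha_{i_{g+1}}=t^{l_g}\alpha_{i_g+l_g}-B_{i_{g+1}}$ at the branching points before eliminating, and you spell out that the third cyclic equation is forced by $\sum_g(B_{i_g}+B_{j_g})=0$, which the paper leaves implicit.
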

 
 We give necessary and sufficient conditions for the module $\MM(I,J)$ to be indecomposable.
 \begin{theorem}\label{thm:indec}
The  module $\MM(I,J)$ is indecomposable if and only if  $t^{\min\{l_3,l_1\}}\nmid B_{i_1}+B_{j_1}$, $t^{\min\{l_1,l_2\}}\nmid B_{i_2}+B_{j_2}$, and  
$t^{\min\{l_2,l_3\}}\nmid B_{i_3}+B_{j_3}$.  
\end{theorem}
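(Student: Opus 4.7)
The strategy mirrors that of Theorem~\ref{t6}, generalized from box size one to arbitrary box sizes $l_1, l_2, l_3$.

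\emph{Step 1 (Endomorphism structure).} First I would extend Proposition~\ref{lm:n6-hom} to this setting. Fix $\varphi_{i_1} = \begin{pmatrix} a & b \\ c & d \end{pmatrix}$ at the branching vertex $i_1$ and propagate $\varphi_i$ around the cycle via $x_{i+1}\varphi_i = \varphi_{i+1}x_{i+1}$. Whenever $j \in (I\cap J) \cup (I^c\cap J^c)$, $x_j$ is scalar and $\varphi_{j-1}=\varphi_j$, so only arrows in $I\Delta J$ introduce new constraints. A direct computation shows that traversing box $g$ (of size $l_g$) makes the off-diagonal entries accumulate precisely the partial sums $B_{i_g}$ and $B_{j_g}$, and that $t^{l_g}$ must divide the bottom-left entry at the peak of that box. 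Collecting constraints at each branching point $i_g$ yields
\begin{align*}
&t \mid c, \\
&t^{\min\{l_{g-1},l_g\}} \mid (d-a)\sigma_g - \sigma_g^{2}\, t^{-1}c, \qquad g=1,2,3,
\end{align*}
where $\sigma_g := B_{i_g}+B_{j_g}$. The minimum arises because, at a branching point, $\varphi_{i_g}$ must have entries in $\CC[[t]]$ when approached from either adjacent box, and the shallower box imposes the weaker constraint. Note $\sigma_1+\sigma_2+\sigma_3 = 0$, so only two of the three conditions are independent.

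\emph{Step 2 (Sufficiency, $\Leftarrow$).} Assume $t^{\min\{l_{g-1},l_g\}}\nmid \sigma_g$ for every $g\in\{1,2,3\}$, and let $\varphi$ be any idempotent endomorphism. Using the three box conditions together with $\sigma_1+\sigma_2+\sigma_3 = 0$, a linear-combination argument parallel to that in Theorem~\ref{t6} (where one eliminates one of the $\sigma_g$'s from two of the three congruences to isolate a term involving only the third) forces $t\mid d-a$ and $t\mid t^{-1}c$, i.e.\ $t^{2}\mid c$. From $\varphi_{i_1}^{2}=\varphi_{i_1}$ we obtain $a(1-a)=bc=d(1-d)$ and $(a+d-1)b=(a+d-1)c=0$. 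Since $t\mid c$, each of $a,d$ reduces modulo $t$ to $0$ or $1$; combined with $t\mid d-a$, this rules out the case $a+d=1$ (which would give $2a\equiv 1\pmod t$ in contradiction with $a\in\{0,1\}\pmod t$). Hence $a=d$ and $bc=0$, so $\varphi_{i_1}\in\{0,\id\}$, and by propagation $\varphi$ is trivial. Therefore $\MM(I,J)$ is indecomposable.

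\emph{Step 3 (Necessity, $\Rightarrow$).} Suppose at least one of the divisibility conditions holds. If all three hold, Proposition~\ref{sum} gives $\MM(I,J)\cong L_I\oplus L_J$. Otherwise, I construct a non-trivial idempotent $\varphi$ explicitly. Say $t^{\min\{l_{g_0-1},l_{g_0}\}}\mid \sigma_{g_0}$ while some other $\sigma_g$ fails the corresponding divisibility; then setting $a=1$, $d=0$, $b=0$, and choosing $c$ of appropriate $t$-valuation so that the condition at $i_{g_0}$ is automatic and the remaining two conditions are solved (each $\sigma_g$ involved being a unit in the relevant truncation of $\CC[[t]]$), we obtain a non-trivial idempotent $\varphi_{i_1}=\begin{pmatrix}1&0\\c&0\end{pmatrix}$, mirroring the construction at the end of Theorem~\ref{t6}. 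Its orthogonal complement gives the other summand, and $\MM(I,J)$ is decomposable.

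\emph{Main obstacle.} The technical heart is Step~1: justifying that the $g$-th constraint is governed precisely by $\min\{l_{g-1},l_g\}$ rather than by one of the two box sizes alone, and that the polynomial expression in the bracket involves exactly the combination $\sigma_g = B_{i_g}+B_{j_g}$. This amounts to tracking the $t$-valuation of the off-diagonal entries of $\varphi_i$ as we propagate: the valuation changes by $\pm 1$ at each rim step, so the entry at the branching point $i_g$ inherits a constraint from each of the two adjacent boxes and the weaker (i.e.\ smaller $l$) dominates. Once this bookkeeping is set up correctly, the remainder of the proof follows the template of Theorem~\ref{t6} almost verbatim.
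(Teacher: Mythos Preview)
Your overall strategy matches the paper's: anchor an endomorphism, propagate around the cycle, extract divisibility constraints from integrality of the entries, and show any idempotent is trivial; the necessity direction is likewise an explicit construction of a nontrivial idempotent. However, there is a genuine gap in Step~2. You assert that the argument is ``parallel to that in Theorem~\ref{t6}'' and follows its template ``almost verbatim'', but this is precisely where the generalization requires new work. In Theorem~\ref{t6} the hypothesis is $t\nmid\sigma_g$, so each $\sigma_g$ is a unit in $\CC[[t]]$ and can be cancelled from the congruences directly. Here the hypothesis is only $t^{\min\{l_{g-1},l_g\}}\nmid\sigma_g$, so $\sigma_g$ may well have positive $t$-valuation $s_g$ with $0\le s_g<\min\{l_{g-1},l_g\}$. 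Your elimination step then yields only a congruence of the shape $t^{m}\mid \sigma_g\cdot(\text{term in }c)$, and since $\sigma_g$ is not a unit you cannot immediately conclude $t^{2}\mid c$ or $t\mid d-a$. The paper resolves this by introducing the valuations $s_g$, observing that the two smallest among $s_1,s_2,s_3$ must coincide (because $\sigma_1+\sigma_2+\sigma_3=0$), and then doing a short case split on whether $l_1-s_1$ or $l_2-s_3$ is smaller in order to extract $t\mid d-a$. This valuation-tracking is the actual technical heart of the proof, so your identification of Step~1 as the ``main obstacle'' is misplaced.

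Two smaller remarks on Step~1. First, the paper anchors at a \emph{peak} (specifically $i_3+l_3$, with $l_3$ taken largest) rather than at a branching point; this matters because the transfer matrix across an entire box between consecutive branching points equals $t^{l_g}$ times a unipotent, hence is invertible over $\CC[[t]]$ after cancelling the scalar, and imposes no integrality constraint by itself---all constraints arise at the peaks. Second, with the paper's anchoring the constraint reads $t^{l_1}\mid(d-a)\sigma_1-t^{-l_3}\sigma_1^{2}c$ (and similarly for $\sigma_3$), together with side conditions $t^{l_3-l_1}\mid c$ and $t^{l_3}\mid\sigma_1 c$; your version with $t^{-1}c$ and a bare $t\mid c$ does not match any consistent anchoring, and getting these exponents right is exactly what feeds into the $s_g$-bookkeeping above.
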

\begin{proof} Let $\varphi=(\varphi_{i_1})_{i=1}^n$ be an idempotent endomorphism of $\MM(I,J)$ and let 
$\varphi_{i_3+l_3}= \begin{pmatrix} a& b \\ c & d \end{pmatrix}$ (see Figure~\ref{fig81}). 
From  $x_{i_1+l_1}x_{i_1+l_1-1}\cdots x_{j_1}\cdot x_{i_1}\cdots x_{i_3+l_3+1}\varphi_{i_3+l_3}=\varphi_{i_1+l_1} x_{i_1+l_1}x_{i_1+l_1-1}\cdots x_{j_1}\cdot x_{i_1}\cdots x_{i_3+l_3+1}$ follows that 
$$\varphi_{i_1+l_1}= \begin{pmatrix} a+t^{-l_3}(B_{i_1}+B_{j_1})c& t^{-l_1}[ b t^{l_3}+(d-a)(B_{i_1}+B_{j_1})-t^{-l_3}(B_{i_1}+B_{j_1})^2c] \\ t^{l_1-l_3}c & d-t^{-l_3}(B_{i_1}+B_{j_1})c \end{pmatrix}, $$ 
where $t^{l_3-l_1}\mid c$, $t^{l_1}\mid (d-a)(B_{i_1}+B_{j_1})-t^{-l_3}(B_{i_1}+B_{j_1})^2c$, and $t^{l_3}\mid (B_{i_1}+B_{j_1})c$. Here, we assume, without loss of generality, that   $l_1\leq l_2\leq l_3$.  

From  $x_{i_3+l_3}\cdots x_{j_3}\cdot x_{i_3}\cdots x_{i_2+l_2+1}\varphi_{i_2+l_2}=\varphi_{i_3+l_3} x_{i_3+l_3}\cdots x_{j_3}\cdot x_{i_3}\cdots x_{i_2+l_2+1}$ follows that 
$$\varphi_{i_2+l_2}= \begin{pmatrix} a-t^{-l_3}(B_{i_3}+B_{j_3})c& t^{-l_2}[b t^{l_3}-(d-a)(B_{i_3}+B_{j_3})-t^{-l_3}(B_{i_3}+B_{j_3})^2c] \\ t^{l_2-l_3}c & d+t^{-l_3}(B_{i_3}+B_{j_3})c \end{pmatrix}, $$ 
where $t^{l_3-l_2}\mid c$, $t^{l_2}\mid (d-a)(B_{i_3}+B_{j_3})+t^{-l_3}(B_{i_3}+B_{j_3})^2c$, and $t^{l_3}\mid (B_{i_3}+B_{j_3})c$. 

Assume that $t^{\min\{l_3,l_1\}}\nmid B_{i_1}+B_{j_1}$, $t^{\min\{l_1,l_2\}}\nmid B_{i_2}+B_{j_2}$, 
and $t^{\min\{l_2,l_3\}}\nmid B_{i_3}+B_{j_3}$. Then for $g=1,2,3,$ there exists $s_g$
 such that $0\leq s_g<\min\{l_{g-1},l_g\}$, $t^{s_g}\mid B_{i_g}+B_{j_g}$, and $t^{s_g+1}\nmid B_{i_g}+B_{j_g}$. Note that since $B_{i_1}+ B_{i_2}+B_{i_3}+B_{j_1}+B_{j_2}+ B_{j_3} =0$ it is not possible that one of the $s_g$'s is strictly less than the other two, i.e.\ the smallest two have to be equal. For the coefficients of $\varphi_{i_3+j_3}$ it holds  that
 \begin{align*}
 t^{l_1-s_1}&\mid (d-a)t^{-s_1}(B_{i_1}+B_{j_1})-t^{-s_1}(B_{i_1}+B_{j_1})t^{-l_3}(B_{i_1}+B_{j_1})c,\\
 t^{l_2-s_3}&\mid  (d-a)t^{-s_3}(B_{i_3}+B_{j_3})+t^{-s_3}(B_{i_3}+B_{j_3})t^{-l_3}(B_{i_3}+B_{j_3})c.
 \end{align*}

 It follows  that $t^{l_1-s_1}\mid (d-a)-t^{-l_3}(B_{i_1}+B_{j_1})c$ and $t^{l_2-s_3}\mid (d-a)+t^{-l_3}(B_{i_3}+B_{j_3})c$.  
 Then $t^{\min\{l_1-s_1, l_2-s_3\}}\mid t^{-l_3}(B_{i_2}+B_{j_2})c$ and subsequently $t^{\min\{l_1-s_1,l_2-s_3\}}\mid t^{-l_3}t^{s_2}c$.  If $\min\{l_1-s_1,l_2-s_3\}=l_2-s_3$, then $t^{l_2-s_3}\mid t^{-l_3}t^{s_2}t^{-s_3}(B_{i_3}+B_{j_3})c$ and $t^{l_2}\mid t^{-l_3}t^{s_2}(B_{i_3}+B_{j_3})c$. Since $l_2>s_2$, we have $t\mid t^{-l_3}(B_{i_3}+B_{j_3})c$ and $t\mid d-a$. Assume now that $\min\{l_1-s_1,l_2-s_3\}=l_1-s_1$.  From $t^{l_1-s_1}\mid t^{-l_3}t^{s_2}t^{-s_1}(B_{i_1}+B_{j_1})c$ we conclude directly that $t^{l_1-s_2}\mid t^{-l_3}(B_{i_1}+B_{j_1})c$. Since $l_1>s_2$, we have  that $t\mid t^{-l_3}(B_{i_1}+B_{j_1})c$ and $t\mid d-a$.

 From  $t\mid d-a$ follows  that $a=d$ and $b=c=0$, giving us the trivial idempotents. Hence, the module $\MM(I,J)$ is indecomposable.   
 
 Assume that at least one of the conditions $t^{\min\{l_3,l_1\}}\nmid B_{i_1}+B_{j_1}$, $t^{\min\{l_1,l_2\}}\nmid B_{i_2}+B_{j_2}$, and 
$t^{\min\{l_2,l_3\}}\nmid B_{i_3}+B_{j_3}$ does not hold.
 If {$t^{\min\{l_2,l_3\}}\mid  B_{i_3}+B_{j_3}$}, $t^{\min\{l_3,l_1\}}\mid  B_{i_1}+B_{j_1}$, and  $t^{\min\{l_1,l_2\}}\mid  B_{i_2}+B_{j_2}$, then, by Proposition~\ref{sum}, $\MM(I,J)\cong L_I\oplus L_J.$

If {$t^{\min\{l_2,l_3\}}\mid  B_{i_3}+B_{j_3}$}, and $t^{\min\{l_3,l_1\}}\nmid  B_{i_1}+B_{j_1}$ or $t^{\min\{l_1,l_2\}}\nmid  B_{i_2}+B_{j_2}$, then we repeat the same procedure as in the previous section to construct a non-trivial idempotent. Assume that $t^{\min\{l_3,l_1\}}\nmid  B_{i_1}+B_{j_1}$ (analogous arguments are used if we were to assume that $t^{\min\{l_1,l_2\}}\nmid  B_{i_2}+B_{j_2}$). The only divisibility conditions that the elements of the matrix $\varphi_{i_3+l_3}= \begin{pmatrix} a& b \\ c & d \end{pmatrix}$ have to fulfill are $t^{l_3-s_1}\mid c$ and $t^{l_1}\mid (d-a)(B_{i_1}+B_{j_1})-t^{-l_3}(B_{i_1}+B_{j_1})^2c$. The latter condition is equivalent to the condition $t^{l_1-s_1}\mid (d-a)-t^{-l_3}(B_{i_1}+B_{j_1})c$.

Recall that if $a=d$, then we only obtain the trivial idempotents because $t\mid c$. So it must be $a+d=1$.  If we set $a=1$, $d=0$, $c=-t^{l_3-s_1}(t^{-s_1}(B_{i_1}+B_{j_1}))^{-1}$, and $b=0$, we  get the idempotent  $$\varphi_{i_3+l_3}=\begin{pmatrix}
1& 0\\
-t^{l_3-s_1}(t^{-s_1}(B_{i_1}+B_{j_1}))^{-1}&0
\end{pmatrix}.$$ 
 Its orthogonal complement is the idempotent
 $$
 \begin{pmatrix}
0& 0\\
t^{l_3-s_1}(t^{-s_1}(B_{i_1}+B_{j_1}))^{-1}&1
\end{pmatrix}.$$  From $x_i\varphi_{i-1}=\varphi_ix_i$, we easily determine idempotents $\varphi_i$, for all $i$. 
Since these  are non-trivial idempotents, it follows that the module $\MM(I,J)$ is decomposable, what we needed to prove. 

The case when $t^{\min\{l_3,l_1\}}\mid  B_{i_1}+B_{j_1}$, and {$t^{\min\{l_2,l_3\}}\nmid  B_{i_3}+B_{j_3}$}  or $t^{\min\{l_1,l_2\}}\nmid  B_{i_2}+B_{j_2}$, and the case when $t^{\min\{l_1,l_2\}}\mid  B_{i_2}+B_{j_2}$, and $t^{\min\{l_3,l_1\}}\nmid  B_{i_1}+B_{j_1}$  or {$t^{\min\{l_2,l_3\}}\nmid  B_{i_3}+B_{j_3}$}   are treated similarly, so we omit the details.   
\end{proof}
 
 \begin{rem}
All arguments from the previous theorem hold more generally, for example, when the boxes are rectangular. By Remark \ref{parremoval}, the matrices $x_i$ are all scalar for the parts that are parallel, so they can be ignored in all computations. So the arguments hold  for $\MM(I,J)$ where the profiles can be reduced to a profile satisfying the conditions of the theorem by removing indices in $(I\cap J)\cup (I^c\cap J^c)$ from the $k$-subsets.
\end{rem}
 
\begin{rem}  As in Proposition~\ref{sumica}, it is possible to give an explicit combinatorial description of the summands of $\MM(I,J)$ when this module is decomposable. We do not give this combinatorial description here, but we note that the coefficients $s_1,s_2,$ and $s_3$ from the proof of the previous theorem play a crucial role in determining the summands. For example, if {$t^{\min\{l_2,l_3\}}\mid  B_{i_3}+B_{j_3}$}, $t^{\min\{l_3,l_1\}}\nmid  B_{i_1}+B_{j_1}$, $t^{\min\{l_1,l_2\}}\nmid  B_{i_2}+B_{j_2}$, and $s_1=s_2=0$,   then $\MM(I,J)$ is isomorphic to $L_X \oplus L_Y$ where $X=(J\cup (I\cap (i_3,i_1]))\setminus  (J\cap (i_3,i_1]))$ and $Y=(I\cup (J\cap (i_3,i_1])))\setminus ( I\cap (i_3,i_1]).$ The result of Proposition~\ref{sumica} is a special case of this result. 
\end{rem}

\begin{corollary} In the case $(4,8)$, if $I=\{2,4,7,8\}$ and $J=\{1,3,5,6\}$, then $\MM(I,J)$ is indecomposable if and only if $t\nmid b_2+b_3$, $t\nmid b_4+b_5$, and $t\nmid b_8+b_1$.  Furthermore, if   $t\nmid b_2+b_3$, $t\mid b_4+b_5$, $t\nmid b_8+b_1$, then  $\MM(I,J)$ is isomorphic to $L_{\{2,3,5,6\}}\oplus L_{\{1,4,7,8\}}$. If   $t\mid b_2+b_3$, $t\nmid b_4+b_5$, $t\nmid b_8+b_1$, then  $\MM(I,J)$ is isomorphic to  $L_{\{2,4,5,6\}}\oplus L_{\{1,3,7,8\}}$.  If  $t\nmid b_2+b_3$, $t\nmid b_4+b_5$, $t\mid b_8+b_1$, then  $\MM(I,J)$ is isomorphic to  $L_{\{2,3,7,8\}}\oplus L_{\{1,4,5,6\}}$.  If  $t\mid b_2+b_3$, $t\mid b_4+b_5$, $t\mid b_8+b_1$, then  $\MM(I,J)$ is isomorphic to  $L_I\oplus L_J$.
\end{corollary}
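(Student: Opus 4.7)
The plan is to apply Theorem~\ref{thm:indec}, Proposition~\ref{sum}, and the Remark following Theorem~\ref{thm:indec} directly to the profile at hand. First I would collect the combinatorial data: for $I=\{2,4,7,8\}$ and $J=\{1,3,5,6\}$ we have $I\cap J=\emptyset=I^c\cap J^c$, so by Remark~\ref{parremoval} no parallel segments need be removed. The branching points in $I$ are $(i_1,i_2,i_3)=(2,4,8)$ and the corresponding ones in $J$ are $(j_1,j_2,j_3)=(3,5,1)$. All three boxes are squares with sizes $l_1=1$, $l_2=2$, $l_3=1$, and the valleys of $I$ and $J$ all coincide with branching points, so there are no junctions and Theorem~\ref{thm:indec} applies.

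A short direct computation of the sums introduced in Section~\ref{sec:non-tight3} gives
\[
B_{i_1}+B_{j_1}=b_2+b_3,\qquad B_{i_2}+B_{j_2}=b_4+b_5+b_6\,t,\qquad B_{i_3}+B_{j_3}=b_8+b_1+b_7\,t.
\]
Since $\min\{l_{g-1},l_g\}=1$ in each case, and the higher-order $t$-terms do not affect divisibility by $t$, the three non-divisibility conditions of Theorem~\ref{thm:indec} reduce respectively to $t\nmid b_2+b_3$, $t\nmid b_4+b_5$, $t\nmid b_8+b_1$. This yields the indecomposability criterion. Furthermore, the relation $\sum_g(B_{i_g}+B_{j_g})=0$ coming from Proposition~\ref{prop:constr-works} forces that either all three sums are divisible by $t$ or exactly one is, so the four cases listed in the corollary exhaust every decomposable configuration.

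For the decomposable cases, the "all three divisible" case is covered by Proposition~\ref{sum}, giving $\MM(I,J)\cong L_I\oplus L_J$. For each of the three "exactly one divisible" cases, the valuations $s_{g'}$ of the two non-divisible sums equal $0$, so the $s_1=s_2=0$ hypothesis of the Remark after Theorem~\ref{thm:indec} is satisfied; the two summands are then obtained by swapping the portions of the rims of $L_I$ and $L_J$ across the unique box that is not involved in the divisible condition. Concretely, when $t\mid b_4+b_5$ one swaps box~$3$ (over $(8,2]$) and obtains $L_{\{2,3,5,6\}}\oplus L_{\{1,4,7,8\}}$; when $t\mid b_8+b_1$ one swaps box~$1$ (over $(2,4]$) and obtains $L_{\{2,3,7,8\}}\oplus L_{\{1,4,5,6\}}$; and when $t\mid b_2+b_3$ one swaps box~$2$ (over $(4,8]$) and obtains $L_{\{2,4,5,6\}}\oplus L_{\{1,3,7,8\}}$, matching the corollary.

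The main obstacle is making the "swap the opposite box" identification rigorous, since the Remark following Theorem~\ref{thm:indec} only sketches it in one representative case. I would handle this by closely mimicking the proof of Proposition~\ref{sumica}: extract the explicit non-trivial idempotent $\varphi$ built in the proof of Theorem~\ref{thm:indec} (with the entry $-t^{l-s}(t^{-s}(B_{i_g}+B_{j_g}))^{-1}$ in the relevant corner), propagate it around the quiver via $x_{i+1}\varphi_i=\varphi_{i+1}x_{i+1}$, pick at each vertex an eigenvector $v_i$ for the eigenvalue $1$ of $\varphi_i$ and an eigenvector $w_i$ for the eigenvalue $1$ of $1-\varphi_i$, and then trace the action of each $x_i$ on $\{v_i\}$ and $\{w_i\}$ to read off the two rank~$1$ rims. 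The result in each of the three cases is precisely the pair of $k$-subsets stated, completing the proof.
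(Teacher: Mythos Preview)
Your proposal is correct and follows exactly the route the paper intends: the corollary is stated without proof because it is a direct specialization of Theorem~\ref{thm:indec}, Proposition~\ref{sum}, and the eigenvector computation from Proposition~\ref{sumica}, and you have correctly carried out that specialization (branching points $(i_1,i_2,i_3)=(2,4,8)$, $(j_1,j_2,j_3)=(3,5,1)$, box sizes $1,2,1$, and the reduction of the $B$-sums to $b_2+b_3$, $b_4+b_5$, $b_8+b_1$ modulo~$t$).

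One small caution: the Remark after Theorem~\ref{thm:indec}, read literally, says that when $t\mid B_{i_3}+B_{j_3}$ one swaps over the interval $(i_3,i_1]$, which is box~$3$, not box~$1$; applying that literally here would produce the wrong pair of summands. Your ``swap the opposite box'' rule is the correct one (it agrees both with the Corollary and with Proposition~\ref{sumica} in the $(3,6)$ case), so the Remark as printed appears to contain a typo in the interval. You were right to flag this as the main obstacle and to propose verifying the three decomposable cases by the explicit idempotent--eigenvector computation of Proposition~\ref{sumica}; that computation is routine and settles the matter independently of how one reads the Remark.
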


We now consider the question of uniqueness of the rank 2 modules we constructed. In case we have a decomposable rank 2 module, the module is completely determined by the divisibility conditions its coefficients $b_i$, $i=1,\ldots,n$, satisfy. Thus, two sets of coefficients $b_i$ define isomorphic decomposable modules if and only if they satisfy the same divisibility conditions. What remains to study is the case when we have an indecomposable module. The next theorem tells us that in this case too, there is a unique rank 2 indecomposable module with $L_{\{1,3,5,6\}}$ as a submodule and $L_{\{2,4,7,8\}}$ as a quotient.

Let $(b_i)$ and $(c_i)$ be different $n$-tuples corresponding to indecomposable modules $\MM_1$ and $\MM_2$ with filtration $L_I\mid L_J$ from Theorem~\ref{thm:indec}, {satisfying the indecomposability conditions of Theorem~\ref{thm:indec}}.  Thus, assume that $t^{\min\{l_{g-1},l_{g}\}}\nmid B_{i_g}+B_{j_g}$, $t^{\min\{l_{g-1},l_{g}\}}\nmid C_{i_g}+C_{j_g}$, for $g=1,2,3.$ Then for each $g$, there exists $s^b_g$ such that  $0\leq s^b_g<\min\{l_{g-1},l_g\}$,  $t^{s^b_g}\mid B_{i_g}+B_{j_g}$, and $t^{s^b_g+1}\nmid B_{i_g}+B_{j_g}$, and there exists $s^c_g$ such that  $0\leq s^c_g<\min\{l_{g-1},l_g\}$,  $t^{s^c_g}\mid C_{i_g}+C_{j_g}$, and $t^{s^c_g+1}\nmid C_{i_g}+C_{j_g}$.
Recall from the proof of Theorem~\ref{thm:indec} that since $B_{i_1}+ B_{i_2}+B_{i_3}+B_{j_1}+B_{j_2}+ B_{j_3} =0$ it is not possible that one of the $s^b_g$'s is strictly less than the other two, i.e.\ the smallest two have to be equal. The same holds for $s^c_g$'s.

\begin{theorem} \label{thm:unique}
Let $(b_i)$ and $(c_i)$ be different $n$-tuples corresponding to indecomposable modules $\MM_1$ and $\MM_2$ with filtration 
$L_I\mid L_J$ {satisfying the indecomposability conditions of Theorem~\ref{thm:indec}}. 
Then the modules $\MM_1$ and $\MM_2$ are isomorphic if and only if $s_g^c=s_g^b$ for $g=1,2,3$. 
\end{theorem}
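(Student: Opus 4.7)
My plan is to mimic the construction of Theorem~\ref{tiso}. I parameterize a candidate isomorphism $\varphi=(\varphi_i)_{i=1}^n:\MM_1\to\MM_2$ by the matrix $\varphi_{i_3+l_3}=\begin{pmatrix}\alpha_0 & \beta_0\\\gamma_0&\delta_0\end{pmatrix}$ and propagate the commutativity relations $x_{i+1}^{\MM_2}\varphi_i=\varphi_{i+1}x_{i+1}^{\MM_1}$ box by box, exactly as in the proof of Theorem~\ref{thm:indec} but with the coefficients $b_i$ replaced by $c_i$ on the target side. This yields an explicit formula for each $\varphi_i$ as a rational expression in $\alpha_0,\beta_0,\gamma_0,\delta_0$ and the $b_i,c_i$. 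Requiring every entry to lie in $\CC[|t|]$ produces, for each branching point $i_g$, a divisibility constraint of the shape
\begin{equation*}
t^{l_g}\Bigm|-\alpha_0(B_{i_g}+B_{j_g})+\delta_0(C_{i_g}+C_{j_g})-t^{-l_{g-1}}(B_{i_g}+B_{j_g})(C_{i_g}+C_{j_g})\gamma_0,
\end{equation*}
together with an auxiliary lower bound on $\ord(\gamma_0)$ coming from the $(2,1)$ and diagonal entries.

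For the ``if'' direction, assume $s_g^b=s_g^c=:s_g$ for all $g$, and write $B_{i_g}+B_{j_g}=t^{s_g}\widetilde B_g$, $C_{i_g}+C_{j_g}=t^{s_g}\widetilde C_g$ with $\widetilde B_g,\widetilde C_g$ units in $\CC[|t|]$. Setting $\beta_0=0$ and $\delta_0=1$, dividing the $g$-th constraint by $t^{s_g}$ turns it into a linear relation between $\alpha_0$ and $\gamma_0$. Solving the $g=1$ and $g=3$ equations as exact identities (mimicking the closed-form calculation at the end of the proof of Theorem~\ref{tiso}) produces $\alpha_0$ as a unit in $\CC[|t|]$ and $\gamma_0$ of sufficiently positive $t$-order to satisfy the auxiliary bound. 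The $g=2$ equation then follows automatically from the compatibility identity $\sum_g(B_{i_g}+B_{j_g})=0=\sum_g(C_{i_g}+C_{j_g})$. Since $\det\varphi_{i_3+l_3}=\alpha_0\delta_0=\alpha_0$ is a unit, $\varphi$ is an isomorphism.

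For the ``only if'' direction, let $\varphi$ be an isomorphism. Then both $\alpha_0$ and $\delta_0$ must be units, for otherwise $\det\varphi_{i_3+l_3}\in t\CC[|t|]$. Suppose, for contradiction, that $s_g^b\ne s_g^c$ for some $g$; say $s_g^b<s_g^c$. Dividing the $g$-th constraint above by $t^{s_g^b}$, the first term becomes $-\alpha_0\widetilde B_g$, a unit, while the second term is divisible by $t^{s_g^c-s_g^b}\ge t$, and the third term has strictly positive $t$-order by the auxiliary bound on $\ord(\gamma_0)$. Hence the whole expression has $t$-order $0$, contradicting the required divisibility by $t^{l_g-s_g^b}\ge t$. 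The case $s_g^b>s_g^c$ is symmetric, so $s_g^b=s_g^c$ for all $g$.

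The main technical hurdle is the bookkeeping of the three simultaneous divisibility constraints on $\gamma_0$ and, in the ``only if'' direction, the verification that the quadratic third term in the constraint cannot absorb the discrepancy between $s_g^b$ and $s_g^c$; this is where the inequalities $s_g<\min\{l_{g-1},l_g\}$ and the compatibility identity $\sum_g(B_{i_g}+B_{j_g})=0$ (which, as noted just before the theorem statement, forces the two smallest $s_g$'s to agree) play an essential role.
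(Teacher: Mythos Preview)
Your proposal is correct and follows essentially the same route as the paper: parametrize a morphism by $\varphi_{i_3+l_3}$, propagate through the boxes to extract the three divisibility constraints, and then (for ``if'') solve two of them as exact equalities with $\beta_0=0,\ \delta_0=1$ to obtain an explicit unit $\alpha_0$ and a $\gamma_0$ with enough $t$-divisibility, while (for ``only if'') reduce one constraint modulo $t$ to force a unit to be divisible by $t$. Two small remarks: your displayed generic constraint has the box-size indices slightly off (in the paper the power of $t$ in the quadratic term is the fixed $t^{-l_3}$, and the divisibility power at the peak $i_1+l_1$ is $t^{l_1}$ while the sum involved is $B_{i_1}+B_{j_1}$), so be careful with the indexing when you write it up; and your closing comment overstates the difficulty of the quadratic term in the ``only if'' direction---the compatibility identity $\sum_g(B_{i_g}+B_{j_g})=0$ is not needed there, since the diagonal constraint $t^{l_3}\mid (C_{i_g}+C_{j_g})\gamma_0$ together with the extra factor $t^{-s_g^{\min}}(B_{i_g}+B_{j_g})\in t\CC[|t|]$ already makes that term divisible by $t$.
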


\begin{proof}
Let us assume that there is an isomorphism $\varphi=(\varphi_i)_{i=1}^n$  between $\MM_1$ and $\MM_2$. If $\varphi_{i_3+l_3}=\begin{pmatrix}\alpha& \beta \\ \gamma & \delta  \end{pmatrix}$, then from $x_{i_1+l_1}x_{i_1+l_1-1}\cdots x_{j_1}\cdot x_{i_1}\cdots x_{i_3+l_3+1}\varphi_{i_3+l_3}=\varphi_{i_1+l_1} x_{i_1+l_1}x_{i_1+l_1-1}\cdots x_{j_1}\cdot x_{i_1}\cdots x_{i_3+l_3+1}$ follows that $\varphi_{i_1+l_1}$ is equal to 
$$
\begin{pmatrix} \alpha+t^{-l_3}(C_{i_1}+C_{j_1})\gamma & t^{-l_1}[t^{l_3}\beta-\alpha (B_{i_1}+B_{j_1}) + (C_{i_1}+C_{j_1})\delta-(B_{i_1}+B_{j_1})t^{-l_3}(C_{i_1}+C_{j_1})\gamma]\\ t^{l_1-l_3}\gamma& \delta-t^{-l_3}(B_{i_1}+B_{j_1})\gamma \end{pmatrix},$$
and $\varphi_{i_2+l_2}$ is equal to 
$$\begin{pmatrix} \alpha-t^{-l_3}(C_{i_3}+C_{j_3})\gamma & t^{-l_2}[t^{l_3}\beta+\alpha (B_{i_3}+B_{j_3}) -(C_{i_3}+C_{j_3})\delta-(B_{i_3}+B_{j_3})t^{-l_3}(C_{i_3}+C_{j_3})\gamma]\\ t^{l_2-l_3}\gamma & \delta+t^{-l_3}(B_{i_3}+B_{j_3})\gamma \end{pmatrix},$$
where $t^{l_3-l_2}\mid \gamma$, $t^{l_3-l_1}\mid \gamma$, $t^{l_3}\mid (B_{i_1}+B_{j_1})\gamma$, $t^{l_3} \mid (C_{i_1}+C_{j_1})\gamma$, $t^{l_1}\mid -\alpha (B_{i_1}+B_{j_1}) + (C_{i_1}+C_{j_1})\delta-(B_{i_1}+B_{j_1})t^{-l_3}(C_{i_1}+C_{j_1})\gamma$, and $t^{l_2}\mid \alpha (B_{i_3}+B_{j_3}) -(C_{i_3}+C_{j_3})\delta-(B_{i_3}+B_{j_3})t^{-l_3}(C_{i_3}+C_{j_3})\gamma$. Here, without loss of generality we assume that $l_3$ is the largest amongst $l_1,l_2,l_3$.  The coefficients $\alpha$ and $\delta$ are invertible because $t\mid \gamma$ and $\varphi_{i_3+l_3}$ is invertible.

If $s_1^b>s^c_1$, then $t^{-s_1^c}(B_{i_1}+B_{j_1})$ is divisible by $t$. From $t^{l_1}\mid -\alpha (B_{i_1}+B_{j_1}) + (C_{i_1}+C_{j_1})\delta-(B_{i_1}+B_{j_1})t^{-l_3}(C_{i_1}+C_{j_1})\gamma$ it follows that $$t^{l_1-s_1^c}\mid -\alpha t^{-s_1^c}(B_{i_1}+B_{j_1}) + t^{-s_1^c}(C_{i_1}+C_{j_1})\delta-t^{-s_1^c}(B_{i_1}+B_{j_1})t^{-l_3}(C_{i_1}+C_{j_1})\gamma.$$ This implies that $t\mid  t^{-s_1^c}(C_{i_1}+C_{j_1})\delta$, but neither $\delta$ nor  $t^{-s_1^c}(C_{i_1}+C_{j_1})$ is divisible by $t$. This is a contradiction. The proof is analogous if $s_2^b>s^c_2$ or $s_3^b>s^c_3$. Thus if  $s_g^c\neq s_g^b$ for at least one $g$, the modules $\MM_1$ and $\MM_2$ are not isomorphic.

Assume that $s_g^c=s_g^b$ for $g=1,2,3$. From $t^{l_1}\mid -\alpha (B_{i_1}+B_{j_1}) + (C_{i_1}+C_{j_1})\delta-(B_{i_1}+B_{j_1})t^{-l_3}(C_{i_1}+C_{j_1})\gamma$ and $t^{l_2}\mid \alpha (B_{i_3}+B_{j_3}) -(C_{i_3}+C_{j_3})\delta-(B_{i_3}+B_{j_3})t^{-l_3}(C_{i_3}+C_{j_3})\gamma$ we have 
$$t^{\min\{l_1,l_2\}}\mid \alpha (B_{i_1}+B_{j_1}) (C_{i_2}+C_{j_2}) (B_{i_3}+B_{j_3}) - \delta (C_{i_1}+C_{j_1})  (B_{i_2}+B_{j_2}) (C_{i_3}+C_{j_3}).$$ 

If we set $\alpha (B_{i_1}+B_{j_1}) (C_{i_2}+C_{j_2}) (B_{i_3}+B_{j_3}) - \delta (C_{i_1}+C_{j_1})  (B_{i_2}+B_{j_2}) (C_{i_3}+C_{j_3})=0$, then by defining  $\delta=1$, we get  $$\alpha=\displaystyle\frac{ [t^{-s_1^c}(C_{i_1}+C_{j_1})][t^{-s_2^b}(B_{i_2}+B_{j_2})][t^{-s_3^c}(C_{i_3}+C_{j_3})]}{[t^{-s_1^b}(B_{i_1}+B_{j_1})][t^{-s_2^c}(C_{i_2}+C_{j_2})][t^{-s_3^b}(B_{i_3}+B_{j_3})]}. $$

By setting  $-\alpha (B_{i_1}+B_{j_1}) + (C_{i_1}+C_{j_1})\delta-(B_{i_1}+B_{j_1})t^{-l_3}(C_{i_1}+C_{j_1})\gamma=0$, we obtain $$\gamma=t^{l_3} \displaystyle\frac{[t^{-s_2^c}(C_{i_2}+C_{j_2})][t^{-s_3^b}(B_{i_3}+B_{j_3})]-[t^{-s_2^b}(B_{i_2}+B_{j_2})][t^{-s_3^c}(C_{i_3}+C_{j_3})]}{[t^{-s_1^b}(B_{i_1}+B_{j_1})][t^{-s_2^c}(C_{i_2}+C_{j_2})][t^{-s_3^b}(B_{i_3}+B_{j_3})]}.$$

The coefficients $\alpha, \beta$, and $\gamma$ of $\varphi_{i_3+l_3}$ satisfy the necessary divisibility conditions, it is left to set $\beta=0$ to obtain the isomorphism {\renewcommand\arraystretch{2.5}
$$\varphi_{i_3+l_3}=\begin{pmatrix} \displaystyle\frac{ [t^{-s_1^c}(C_{i_1}+C_{j_1})][t^{-s_2^b}(B_{i_2}+B_{j_2})][t^{-s_3^c}(C_{i_3}+C_{j_3})]}{[t^{-s_1^b}(B_{i_1}+B_{j_1})][t^{-s_2^c}(C_{i_2}+C_{j_2})][t^{-s_3^b}(B_{i_3}+B_{j_3})]}& \,\,\,\,\,\,\,\, 0 \\  t^{l_3} \displaystyle\frac{[t^{-s_2^c}(C_{i_2}+C_{j_2})][t^{-s_3^b}(B_{i_3}+B_{j_3})]-[t^{-s_2^b}(B_{i_2}+B_{j_2})][t^{-s_3^c}(C_{i_3}+C_{j_3})]}{[t^{-s_1^b}(B_{i_1}+B_{j_1})][t^{-s_2^c}(C_{i_2}+C_{j_2})][t^{-s_3^b}(B_{i_3}+B_{j_3})]} & \,\,\,\,\,\,\,\,1  \end{pmatrix}.$$
Once we know $\varphi_{i_3+l_3}$, $\varphi_{i_2+l_2}$, and $\varphi_{i_1+l_1}$, we easily compute $\varphi_i$, for all $i$, again by using relations $x_i\varphi_{i-1}=\varphi_ix_i.$}
\end{proof}

\begin{corollary} \label{talmostcor}
In the tame case $(4,8)$, up to isomorphism, there is a unique rank $2$ indecomposable module with filtration layers $2478|1356$.
\end{corollary}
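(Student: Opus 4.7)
The plan is to read off the combinatorial data for this specific profile and then invoke Theorem~\ref{thm:unique}. For $I=\{2,4,7,8\}$ and $J=\{1,3,5,6\}$ one has $I\cap J=\emptyset=I^c\cap J^c$, so no reduction of parallel segments is needed and every short exact sequence $0\to L_J\to M\to L_I\to 0$ is realised, after extending a basis of $L_J$ to a basis of $M$, by some tuple $(b_1,\ldots,b_8)$ in the standard form $\mathbb{M}(I,J)$. First I would identify the branching points of $I$, namely its valleys $\{i_1,i_2,i_3\}=\{2,4,8\}$, and compute the sizes of the three resulting boxes:
\[
l_1=\tfrac{1}{2}|\{3,4\}|=1,\qquad l_2=\tfrac{1}{2}|\{5,6,7,8\}|=2,\qquad l_3=\tfrac{1}{2}|\{1,2\}|=1.
\]

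Next, I would observe the crucial numerical coincidence that $\min\{l_{g-1},l_g\}=1$ for every $g\in\{1,2,3\}$. By Theorem~\ref{thm:indec}, indecomposability of $\mathbb{M}(I,J)$ then reduces to the three conditions $t\nmid B_{i_g}+B_{j_g}$, which in the notation of the earlier corollary are precisely $t\nmid b_2+b_3$, $t\nmid b_4+b_5$, and $t\nmid b_8+b_1$. Consequently, the integer invariants $s_g^b$ introduced before Theorem~\ref{thm:unique} are constrained by $0\le s_g^b<\min\{l_{g-1},l_g\}=1$, so each $s_g^b$ must equal $0$; exactly the same holds for the invariants $s_g^c$ attached to any second tuple $(c_i)$ defining an indecomposable module with this filtration.

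Finally, given two indecomposable rank~$2$ modules $\mathbb{M}_1$ and $\mathbb{M}_2$ with profile $2478\mid 1356$, the three pairs of invariants automatically agree since $s_g^b=0=s_g^c$ for every $g$. Theorem~\ref{thm:unique} then yields an explicit isomorphism $\mathbb{M}_1\cong\mathbb{M}_2$, proving uniqueness up to isomorphism. I do not expect any genuine obstacle here: the smallness of the box sizes collapses the moduli permitted by Theorem~\ref{thm:unique} to a single isomorphism class, and the only thing that needs careful verification is the combinatorics identifying the branching points and box sizes for this particular profile.
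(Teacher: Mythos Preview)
Your proposal is correct and follows exactly the paper's own approach: the paper's proof simply observes that $s_g=0$ for $g=1,2,3$ and invokes Theorem~\ref{thm:unique}. You have supplied the combinatorial verification (box sizes $1,2,1$, hence $\min\{l_{g-1},l_g\}=1$ for every $g$, forcing $s_g^b=s_g^c=0$) that the paper leaves implicit, but the logical structure is identical.
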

\begin{proof} In this case, $s_g=0$, for $g=1,2,3$, and the statement follows from the previous theorem.
\end{proof}

 \begin{ex} Observe the case  $(5,10)$. Let $I=\{1,2,5,6,8\}$ and  $J=\{3,4,7,9,10\}$. For this profile, there are three boxes of sizes $l_1=1,$ $l_2=2$, and $l_3=2$. 
 
 Define $b_1=0$, $b_2=t$, $b_3=b_4=b_5=0$, $b_6=1$, $b_7=-t$, $b_8=-1$, and $b_9=b_{10}=0$. It holds that $\sum_{i=1}^{10}b_i=0$, $t\nmid b_7+b_6+tb_5$, $t\nmid b_8+b_9+tb_{10}$, $t^2\nmid  b_2+tb_1+b_3+tb_4$, and $t\mid b_2+tb_1+b_3+tb_4$. Thus, $s^b_1=s^b_2=0$ and $s^b_3=1$. Denote the corresponding module by $\MM_1$. This module is indecomposable by Theorem~\ref{thm:indec}. 
 
Define   $c_1=0$, $c_2=1$, $c_3=c_4=c_5=0$, $c_6=1$, $c_7=0$, $c_8=-2$, and $c_9=c_{10}=0$. It holds that $\sum_{i=1}^{10}c_i=0$, $t\nmid c_7+c_6+tc_5$, $t\nmid c_8+c_9+tc_{10}$, $t^2\nmid  c_2+tc_1+c_3+tc_4$, and $t\nmid c_2+tc_1+c_3+tc_4$. Thus, $s^c_1=s^c_2=s^c_3=0$. Denote the corresponding module by $\MM_2$. This module is indecomposable by Theorem~\ref{thm:indec}. 

By Theorem~\ref{thm:unique}, the modules $\MM_1$ and $\MM_2$ are not isomorphic.  
 
 \end{ex}

 \begin{corollary}
 Let $I\mid J$ be the filtration from Theorem~\ref{thm:indec} and assume that $l_1\leq l_2\leq l_3$. Up to isomorphism, there are $$l_1(\frac {l_1-1}{2}+l_2)$$     
  indecomposable modules with filtration $I\mid J$. 
 \end{corollary}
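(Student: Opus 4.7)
The plan is to combine Theorem~\ref{thm:indec} with Theorem~\ref{thm:unique} to reduce the count of isomorphism classes of indecomposable modules $\MM(I,J)$ to a count of admissible triples $(s_1,s_2,s_3)$, and then to enumerate these triples under the hypothesis $l_1\le l_2\le l_3$.

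First, I would recall from Theorem~\ref{thm:indec} that an indecomposable $\MM(I,J)$ determines integers $s_g$ via $t^{s_g}\,\|\,B_{i_g}+B_{j_g}$, subject to $0\le s_g<\min\{l_{g-1},l_g\}$ (indices mod $3$, with $l_0=l_3$). Under $l_1\le l_2\le l_3$ these bounds become $s_1,s_2\in\{0,\ldots,l_1-1\}$ and $s_3\in\{0,\ldots,l_2-1\}$. The relation $\sum_g (B_{i_g}+B_{j_g})=0$ forces the two smallest of the three $s_g$'s to coincide (otherwise the minimum one would contradict the sum vanishing). Theorem~\ref{thm:unique} then identifies isomorphism classes of indecomposable $\MM(I,J)$ bijectively with triples $(s_1,s_2,s_3)$ satisfying these constraints, provided each admissible triple is realised by some $n$-tuple $(b_i)$.

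Realisation is the one step that needs a small verification: given any target triple, I would construct the $b_i$'s by setting $b_i=0$ except at the branching points $i_g,j_g$, and then choose $b_{i_g},b_{j_g}$ so that the $t^0$-part of $B_{i_g}+B_{j_g}$ is a nonzero scalar exactly when $s_g=0$; otherwise I shift mass to higher $t$-powers by adjusting a neighbour of the branching point within the corresponding box. The global constraint $\sum b_i=0$ leaves enough free parameters to arrange $t^{s_g}\,\|\,B_{i_g}+B_{j_g}$ simultaneously, with the compatibility condition on the two smallest $s_g$'s being the only obstruction.

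The combinatorial count then proceeds by cases on how the minimum is attained. If all three $s_g$'s agree with a common value $m$, then $m\in\{0,\ldots,l_1-1\}$ contributes $l_1$ triples. If exactly two equal the minimum $m$, I split into three subcases:
\begin{itemize}
\item[(a)] $s_1=s_2=m<s_3$: here $m\in\{0,\ldots,l_1-1\}$ and $s_3\in\{m+1,\ldots,l_2-1\}$, giving $\sum_{m=0}^{l_1-1}(l_2-1-m)=l_1(l_2-1)-\tfrac{l_1(l_1-1)}{2}$ triples;
\item[(b)] $s_1=s_3=m<s_2$: here $m\in\{0,\ldots,l_1-1\}$ and $s_2\in\{m+1,\ldots,l_1-1\}$, giving $\tfrac{l_1(l_1-1)}{2}$ triples;
\item[(c)] $s_2=s_3=m<s_1$: symmetric to (b), giving $\tfrac{l_1(l_1-1)}{2}$ triples.
\end{itemize}
Summing, the total is
\[
l_1+l_1(l_2-1)-\tfrac{l_1(l_1-1)}{2}+\tfrac{l_1(l_1-1)}{2}+\tfrac{l_1(l_1-1)}{2}=l_1 l_2+\tfrac{l_1(l_1-1)}{2}=l_1\!\left(\tfrac{l_1-1}{2}+l_2\right),
\]
as required. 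The main obstacle is the realisation step: the divisibility bookkeeping of $B_{i_g}+B_{j_g}$ across all three boxes must be carried out carefully so that no unintended cancellation forces $s_g$ to be larger than intended, but the abundance of free parameters in the $b_i$'s makes this a routine verification.
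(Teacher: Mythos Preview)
Your proposal is correct and follows essentially the same approach as the paper: both reduce the count to enumerating admissible triples $(s_1,s_2,s_3)$ with $s_g<\min\{l_{g-1},l_g\}$ and the two smallest equal, then perform an elementary case analysis. The paper organises the count by fixing $s_1=i$ and summing $\sum_{i=0}^{l_1-1}(i+1+(l_1-i-1)+(l_2-i-1))$, whereas you split according to which pair attains the minimum; the arithmetic is the same. Your explicit treatment of the realisation step (that every admissible triple actually arises from some choice of $b_i$'s) is a welcome addition that the paper leaves implicit.
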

\begin{proof}  In order to count the non-isomorphic modules, we have to count the number of different triples $s_1,s_2,s_3$. Recall that the smallest two among $s_1,s_2$ and $s_3$ have to be equal because $B_{i_1}+ B_{i_2}+B_{i_3}+B_{j_1}+B_{j_2}+ B_{j_3} =0$. Also, recall that $s_i<\min\{s_{i-1},s_i\}$.

Assume that  $s_1=i$. If $s_2$ or $s_3$ is less than $s_1$, then $s_3=s_2$ and there are $i$ different choices for $s_3=s_2$. If $s_1$ is less than $s_2$, then $s_1$ has to be equal to $s_3$ and vice versa, if  $s_1$ is less than $s_3$, then $s_1$ has to be equal to $s_2$. In total there are $l_1-i-1$ options in the former case, and $l_2-i-1$ in the latter case. We also have to count the case when $s_1=s_2=s_3=i$. Thus, there are $$\sum_{i=0}^{l_1-1}(i+1+l_1-i-1+l_2-i-1)=\sum_{i=0}^{l_1-1}(l_1+l_2-i-1)=l_1(\frac {l_1-1}{2}+l_2).$$    
\end{proof}

The expression $l_1(\frac {l_1-1}{2}+l_2)$ is equal to 1 if and only if $l_1=l_2=1$. In all other cases it is strictly greater than 1. Thus, there is a unique indecomposable module for a given filtration only when $l_1=l_2=1$, i.e. only when two of the boxes of the profile are of size 1. We give this special type of a profile a name. We say that $I$ and $J$ are {\em almost tightly $3$-interlacing} if $I\setminus J=\{a_1\}\cup\{a_2\}\cup \{a_3, \dots, a_{3+r}\}$ and $J\setminus I=\{b_1\}\cup\{b_2\}\cup \{b_3, \dots, b_{3+r}\}$, $r\geq 0$, and $b_1<a_1<b_2<a_2<b_3< \dots <b_{3+r}<a_3< \dots < a_{3+r}.$  Combinatorially,  $I$ and $J$ are almost tightly $3$-interlacing  if the profile $I\mid J$ has three squared boxes, at least two of them of size 1. Here, we assume that the potential parallel lines of the rims have been removed in order to simplify notation. Note that, by definition, tightly $3$-interlacing layers are also almost tightly $3$-interlacing.  


  \begin{corollary}
 Let $I\mid J$ be the filtration from Theorem~\ref{thm:indec}.  There is a unique rank $2$ indecomposable module with filtration $I\mid J$ if and only if $I$ and $J$ are almost tightly $3$-interlacing. 
 \end{corollary}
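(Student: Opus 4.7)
The plan is to derive this corollary directly from the counting formula in the preceding corollary. By that result, the number of non-isomorphic indecomposable rank $2$ modules with filtration $I\mid J$ equals $l_1(\tfrac{l_1-1}{2}+l_2)$, where $l_1\le l_2\le l_3$ are the sizes of the three boxes formed by the rims of $I$ and $J$ (after removing, per Remark~\ref{parremoval}, the indices in $(I\cap J)\cup (I^c\cap J^c)$). So uniqueness is equivalent to the arithmetic condition
\[
l_1\!\left(\tfrac{l_1-1}{2}+l_2\right) = 1.
\]

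First I would solve this equation over the positive integers. Since $l_1\ge 1$ and $\tfrac{l_1-1}{2}+l_2\ge 1$, both factors must equal $1$. The first factor gives $l_1=1$, and substituting back forces $l_2=1$ as well (while $l_3\ge l_2=1$ is unconstrained). Thus uniqueness holds precisely when two of the three boxes have size $1$.

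Next I would translate this combinatorial condition back into the language of the rims. After reducing $I$ and $J$ so that $(I\cap J)\cup(I^c\cap J^c)=\emptyset$, having two boxes of size $1$ means the rims meet at three branching points such that two consecutive stretches between branching points contain a single element of $I\setminus J$ and a single element of $J\setminus I$ each, while the third stretch contains the remaining elements. Writing $I\setminus J=\{a_1\}\cup\{a_2\}\cup\{a_3,\dots,a_{3+r}\}$ and $J\setminus I=\{b_1\}\cup\{b_2\}\cup\{b_3,\dots,b_{3+r}\}$ with the cyclic order $b_1<a_1<b_2<a_2<b_3<\dots<b_{3+r}<a_3<\dots<a_{3+r}$, one recovers exactly the definition of $I$ and $J$ being almost tightly $3$-interlacing. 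Conversely, the almost-tight condition produces a profile with box sizes $(1,1,r+1)$ in the reduced picture, so $l_1=l_2=1$ and the formula returns $1$.

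The routine step will be checking the cyclic ordering carefully when $r\ge 1$; since the two small boxes are squares of size $1$, the branching points are uniquely located and the third box absorbs all remaining indices, which matches the stated enumeration of $I\setminus J$ and $J\setminus I$. No genuine obstacle arises: the content of the corollary is the solution of the Diophantine equation above, combined with the dictionary between box sizes and the almost-tight combinatorial condition. This then also recovers Corollary~\ref{talmostcor} as the special case $r=0$, $k=4$, $n=8$. \qed
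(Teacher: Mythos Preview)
Your proposal is correct and follows essentially the same approach as the paper: the argument there is exactly to observe that the counting formula $l_1(\tfrac{l_1-1}{2}+l_2)$ from the preceding corollary equals $1$ if and only if $l_1=l_2=1$, and then to identify this with the almost tightly $3$-interlacing condition. Your write-up simply makes explicit the translation between box sizes and the combinatorial definition that the paper leaves implicit in the paragraph preceding the corollary.
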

 
 \begin{rem}   In the general case of layers with profile $1^3\mid 2$, i.e.\ in the case  when $I$ and $J$ form three boxes that are not necessarily squares or rectangles (meaning that there are junctions), one can construct multiple non-isomorphic indecomposable modules with the given filtration. To see this, when constructing indecomposable modules we deal with  junction points in the following way. Observe {Figure~\ref{fig:3boxes-i-j}}. For the junction points 4 and 6 of the leftmost box, we define $b_4=b_5=b_6=b_7=0$. Similarly, for the rightmost box we set $b_{14}=b_{15}=0$. In this way we practically ignore the junctions because for each junction point $i$ it holds that $x_{i+1}x_i=t\cdot {\rm id}$ so it does not interfere with our computation, and we can behave as if we were in the case when all boxes are squares. If there are at least two boxes with junction points, then as in the case when all boxes are squares it follows that there are multiple non-isomorphic indecomposable modules. In the case when there is only one box with junction points, then we use the arguments from the last section (by treating one of the junctions as if it were a branching point) to construct multiple non-isomorphic indecomposable modules. 
 \end{rem}

\begin{ex} 
In the tame case $(4,8)$, there is only one type of a profile that is almost tightly $3$-interlacing. 
Such a profile is $2478\mid 1356$ {(and all profiles obtained from this profile by adding $a$ to each element of both $4$-subsets, 
for $a=1,\dots, 7$)}. 
To construct modules with this profile we define $x_i=\begin{pmatrix} t& b_{i} \\ 0 & 1 \end{pmatrix}$, 
$y_i=\begin{pmatrix} 1& -b_{i} \\ 0 & t \end{pmatrix},$ for $i=2,4,7,8$ and 
$x_{i}=\begin{pmatrix} 1& b_{i} \\ 0 & t \end{pmatrix},$ $y_i=\begin{pmatrix} t& -b_{i} \\ 0 & 1 \end{pmatrix},$ for  $i=1,3,5,6$.   
Note that the $x_i$'s are almost the same as for a module with the profile $2468\mid 1357$ constructed in the next 
section, with only the ones at vertices 6 and 7 changing places. In order for this to be a module we assume that 
$b_1+b_2+b_3+b_4+b_5+b_8+t(b_6+b_7)=0.$ 
 Denote this module again by $\MM(I,J)$. As for the module $135\mid 246$, it is easily seen that $L_J$ is a summand of $\MM(I,J)$ if and only if $t\mid b_8+b_1$, $t\mid b_2+b_3$, and $t\mid b_4+b_5$. The module $\MM(I,J)$ is indecomposable if and only if  $t\nmid b_2+b_3$, $t\nmid b_4+b_5$, $t\nmid b_8+b_1$. If   $t\nmid b_2+b_3$, $t\mid b_4+b_5$, $t\nmid b_8+b_1$, then  $\MM(I,J)$ is isomorphic to $L_{\{2,3,5,6\}}\oplus L_{\{1,4,7,8\}}$. If   $t\mid b_2+b_3$, $t\nmid b_4+b_5$, $t\nmid b_8+b_1$, then  $\MM(I,J)$ is isomorphic to  $L_{\{2,4,5,6\}}\oplus L_{\{1,3,7,8\}}$.  If  $t\nmid b_2+b_3$, $t\nmid b_4+b_5$, $t\mid b_8+b_1$, then  $\MM(I,J)$ is isomorphic to  $L_{\{2,3,7,8\}}\oplus L_{\{1,4,5,6\}}$.

There are four different decomposable modules appearing as the middle term in a short exact sequence that has  $L_I$ (as a quotient) and $L_J$ (as a submodule) as end terms: 
\begin{align*}
0\longrightarrow L_J\longrightarrow L_{\{1, 3, 5,6\}}\oplus L_{\{2,4, 7,8\}} \longrightarrow L_I \longrightarrow 0,\\
0\longrightarrow L_J\longrightarrow L_{\{1, 4, 7,8 \}}\oplus L_{\{2,3,5, 6\}} \longrightarrow L_I \longrightarrow 0,\\
0\longrightarrow L_J\longrightarrow L_{\{1, 3, 7,8\}}\oplus L_{\{2,4,5, 6\}} \longrightarrow L_I \longrightarrow 0,\\
0\longrightarrow L_J\longrightarrow  L_{\{2,3,7,8\}}  \oplus L_{\{1, 4, 5,6 \}}\longrightarrow L_I \longrightarrow 0.
\end{align*}
The {pairs of profiles} of the four modules that appear in the middle in these short exact sequences can be 
pictured as follows (similarly as in Example~\ref{ex:3-6-through}). 

\begin{figure}[H]
\begin{center}
\subfloat[$L_{\{1, 3, 5,6\}}\oplus L_{\{2,4, 7,8\}}$]{\includegraphics[width = 6cm]{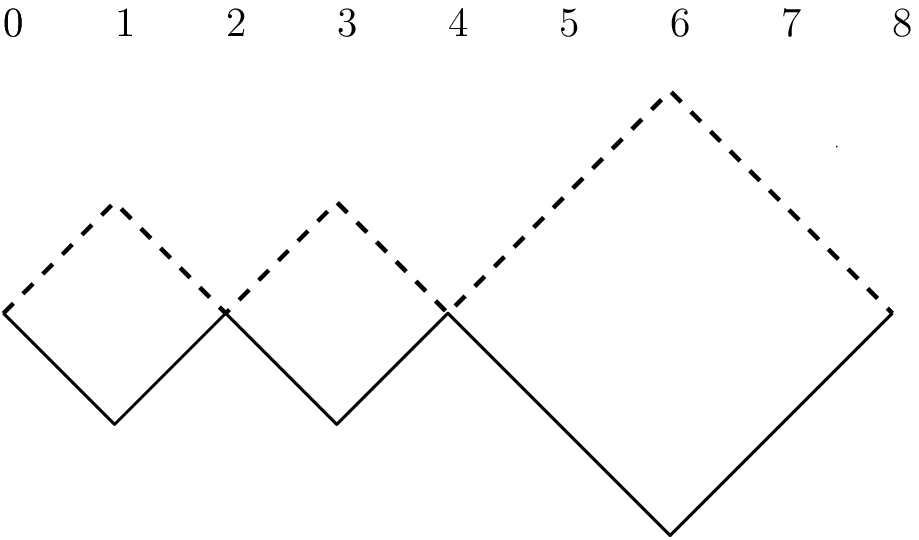}}  \quad \quad \quad \quad
\subfloat[$L_{\{1, 4, 7,8\}}\oplus L_{\{2,3,5, 6\}}$]{\includegraphics[width = 6cm ]{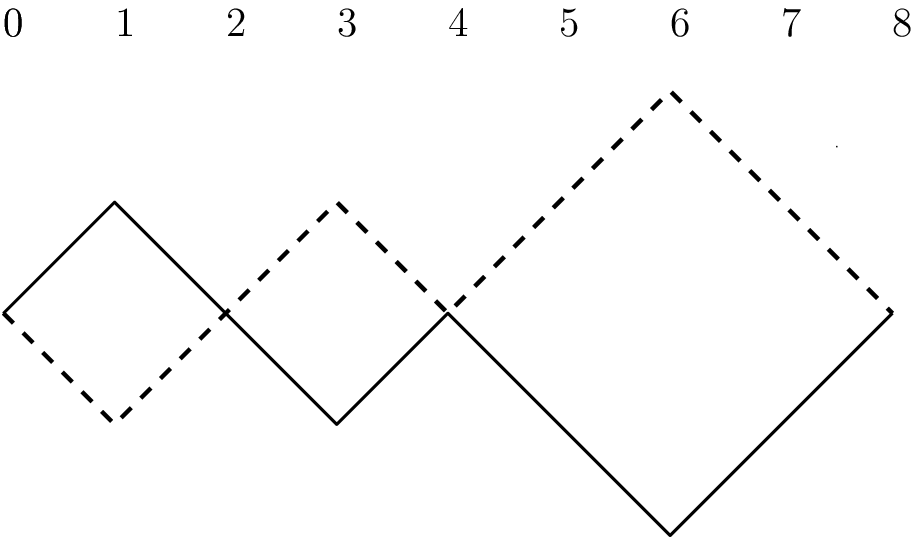}}\\
\subfloat[$L_{\{1, 3, 7,8\}}\oplus L_{\{2,4, 5, 6\}}$]{\includegraphics[width = 6cm ]{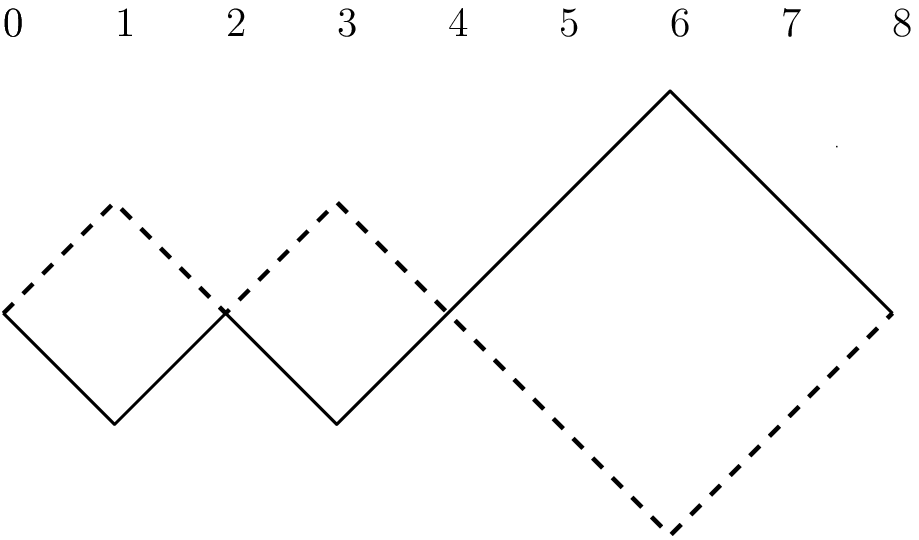}}  \quad \quad \quad \quad
\subfloat[$L_{\{2,3,7,8\}} \oplus L_{\{1, 4, 5,6\}} $]{\includegraphics[width = 6cm ]{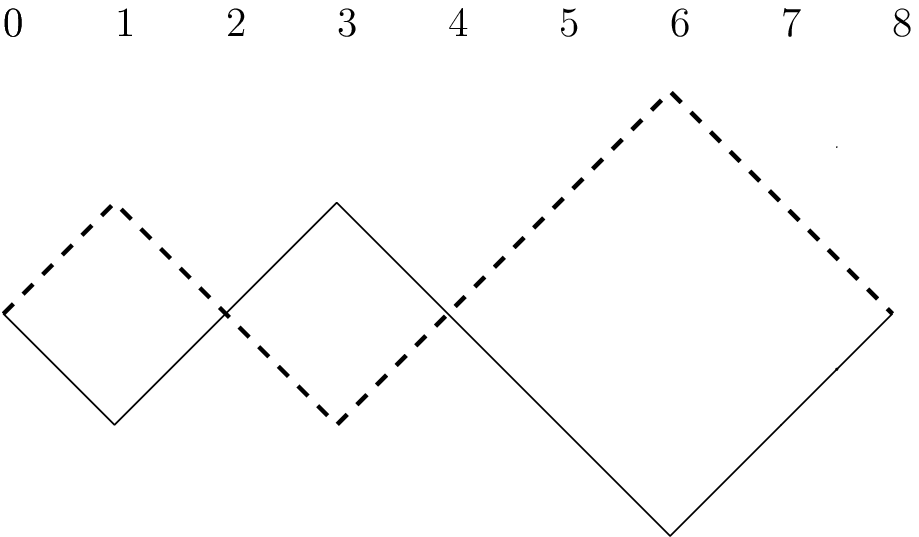}} 
\caption{{The pairs of} profiles of decomposable extensions between $ L_{\{1, 3, 5,6\}}$ and $L_{\{2,4, 7,8\}}$.}
\end{center}
\end{figure}
\end{ex}

\section{Tight $4$-interlacing}\label{sec:4-interlacing}
In the tame case $(4,8)$, there is only one type of configuration of layers with $4$-interlacing,  $1357\mid 2468$ 
(and the one obtained by adding $1$ to each element of the two $4$-subsets). We study this now. Let 
$I=\{1,3,5,7\}$ and $J=\{2,4,6,8\}.$ 
The construction is the same as for the module $135\mid 246$, we just have two more vertices. 
So assume that $x_i=\begin{pmatrix} t& b_{i} \\ 0 & 1 \end{pmatrix}$ for odd $i$ and 
$x_{i}=\begin{pmatrix} 1& b_{i} \\ 0 & t \end{pmatrix}$ for even $i$. 
From $x^k=y^{n-k}$ it follows that $\sum_1^8b_i=0$. We denote the constructed module by $\MM$ and  study the structure of this module with respect to the divisibility conditions of the coefficients $b_i$. 

Just as in the case of the module $135\mid 246$ (Section~\ref{ssec:divisibility}), 
one can argue that $\MM=L_I\oplus L_J$
if and only if  $t\mid b_1+b_2$, $t\mid b_3+b_4$, $t\mid b_5+b_6$, $t\mid b_7+b_8$. 
In order to determine the structure of the module $\MM$ when these four divisibility conditions are not fulfilled, 
first  we determine the structure of an endomorphism of this module.  

If $\varphi=(\varphi_i)_{i=0}^7$ is an endomorphism of $\MM$ and $\varphi_0=\begin{pmatrix}a & b \\ c & d  \end{pmatrix}$, then 
\begin{align*}\varphi_{2i+1}&=\begin{pmatrix}a+(b_1+\dots+b_{2i+1})t^{-1}c & tb+(d-a)(b_1+\dots+b_{2i+1})-(b_1+\dots+b_{2i+1})^2t^{-1}c \\ t^{-1}c& d-(b_1+\dots+b_{2i+1})t^{-1}c  \end{pmatrix},\\
\varphi_{2i}&=\begin{pmatrix}a+(b_1+\dots+b_{2i})t^{-1}c & b+t^{-1}((d-a)(b_1+\dots+b_{2i})-(b_1+\dots+b_{2i})^2t^{-1}c) \\ c& d-(b_1+\dots+b_{2i})t^{-1}c  \end{pmatrix}, 
\end{align*}
where $t\mid c$, and 
\begin{align} \label{div} 
t&\mid (d-a)(b_1+b_2)-(b_1+b_2)^2t^{-1}c,\\ \nonumber
t&\mid (d-a)(b_1+b_2+b_3+b_4)-(b_1+b_2+b_3+b_4)^2t^{-1}c,\\   \nonumber
t&\mid (d-a)(b_1+b_2+b_3+b_4+b_5+b_6)-(b_1+b_2+b_3+b_4+b_5+b_6)^2t^{-1}c. \nonumber
\end{align}
 
We distinguish between different cases depending on whether the sums $b_1+b_2$, $b_3+b_4$, $b_5+b_6$, and $b_7+b_8$ are divisible by $t$ or not. We will call these the four {\em divisibility conditions} $t\mid b_1+b_2$, $t\mid b_3+b_4$, 
$t\mid b_5+b_6$ and $t\mid b_7+b_8$, 
and write {\rm (div)} to abbreviate. 
There are three base cases: one of the sums is divisible by $t$ and three are not, 
two are divisible by $t$ and two are not, and none of the sums is divisible by $t$. 
We will see that $\MM$ is indecomposable in the first case and partly in the third case. 
We will explain how the module decomposes 
in the other cases. Furthermore, we will also show that there are infinitely many non-isomorphic modules with the same filtration for the indecomposable case when none of the sums is divisible by $t$.

\subsection{Only one of the sums is divisible by $t$ (Case 1)} 
We first assume that  $t\nmid b_1+b_2$, $t\nmid b_3+b_4$,  $t\nmid b_5+b_6$, and  $t\mid b_7+b_8$. 

\begin{theorem} \label{M1234}
The above defined module $\MM$ is indecomposable if $t\nmid b_1+b_2$, $t\nmid b_3+b_4$,  $t\nmid b_5+b_6$, and  $t\mid b_7+b_8$.  
\end{theorem}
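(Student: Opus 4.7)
The plan is to show that the only idempotent endomorphisms of $\MM$ are $0$ and $\Id$, which forces indecomposability. Let $\varphi=(\varphi_i)_{i=0}^{7}$ be idempotent and set $\varphi_0=\begin{pmatrix}a & b \\ c & d\end{pmatrix}$. The structural analysis given just before the theorem (the explicit formulas for $\varphi_{2i},\varphi_{2i+1}$ in terms of $\varphi_0$) already delivers $t\mid c$ together with the three conditions~(\ref{div}). My task is to refine these under the extra hypothesis $t\mid b_7+b_8$.

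Set $S_m:=b_1+\cdots+b_m$. Since $\sum_{i=1}^8 b_i=0$ we have $S_8=0$, so the hypothesis $t\mid b_7+b_8$ gives $t\mid S_6$. Combined with $t\nmid b_5+b_6 = S_6-S_4$ this forces $t\nmid S_4$; by assumption $t\nmid S_2$ as well. The third line of~(\ref{div}) is then automatic, as both summands are divisible by $t$. The first two lines simplify, after dividing by the units $S_2$ and $S_4$ in $\CC[|t|]$, to
\[
t\mid (d-a)-S_2\, t^{-1}c\qquad\text{and}\qquad t\mid (d-a)-S_4\, t^{-1}c.
\]
Subtracting and using that $S_4-S_2=b_3+b_4$ is a unit yields $t\mid t^{-1}c$, i.e.\ $t^{2}\mid c$, and substituting back into either line gives $t\mid d-a$.

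To finish, I apply the standard classification of idempotents in $M_2(\CC[|t|])$ to $\varphi_0$. The relation $\varphi_0^2=\varphi_0$ forces $(a-d)(1-a-d)=0$ together with $bc=a-a^2$. If $a=d$, then either $b=c=0$ and $a\in\{0,1\}$ (the trivial idempotents, consistent with $t\mid d-a=0$), or $a=d=1/2$, which gives $bc=1/4$ in contradiction with $t\mid c$. In the remaining case $a+d=1$, the condition $t\mid d-a=1-2a$ forces $a\equiv 1/2\pmod t$, so $bc=a(1-a)\equiv 1/4 \pmod t$, again contradicting $t\mid c$. Hence $\varphi_0$ is trivial, and then so is the whole morphism $\varphi$ by the commutativity relations already used in Theorem~\ref{t6}. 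I do not expect a genuine obstacle here; the one substantive step beyond the proof of Theorem~\ref{t6} is the geometric observation that the single divisibility $t\mid b_7+b_8$ propagates through $S_6=-(b_7+b_8)$ and the $4$-interlacing to guarantee $t\nmid S_4$, which is precisely what is needed for the two relevant rows of~(\ref{div}) to separate $d-a$ from $S_2 t^{-1}c$.
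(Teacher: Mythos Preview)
Your proof is correct and follows essentially the same route as the paper's. The paper's proof is very terse---it simply observes that $t\nmid b_1+b_2+b_3+b_4$ (since otherwise $t\mid b_5+b_6$) and then invokes the argument of Theorem~\ref{t6} verbatim on the first two conditions of~(\ref{div}); you have written out those details explicitly, including the idempotent classification, but the logic is identical.
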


\begin{proof} As in the proof of  Theorem~\ref{t6}  for the module $135\mid 246$, we repeat the same arguments using the divisibility conditions 
\begin{align*}
 t&\mid (d-a)(b_1+b_2)-(b_1+b_2)^2t^{-1}c,\\ 
  t&\mid (d-a)(b_1+b_2+b_3+b_4)-(b_1+b_2+b_3+b_4)^2t^{-1}c,  
\end{align*}   
to conclude that the only possible idempotent endomorphisms of $\MM$ are the trivial ones. We only note that $t\nmid b_1+b_2+b_3+b_4$ because if it were not so, then $t\mid b_5+b_6$ which is not true.  
\end{proof}

In the previous theorem it suffices to choose $b_1=0$, $b_2=1$, $b_3=2$, $b_4=0$, $b_5=0$, $b_6=-3$, $b_7=-1$, and $b_8=1$ in order to fulfil the conditions of the theorem and to have an indecomposable module. 

In the next theorem we show that this module only depends on the divisibility conditions of the coefficients $b_i$, so if we have two different $8$-tuples satisfying the same divisibility conditions, then they give rise to isomorphic modules. 

\begin{theorem}\label{thm:3-cond-iso}
Let  $(c_1, c_2, c_3, c_4, c_5, c_6,c_7,c_8 )$ be  an $8$-tuple  such that $t\nmid c_1+c_2$, $t\nmid c_3+c_4$,  $t\nmid c_5+c_6$, and  $t\mid c_7+c_8$.  If $\MM'$ is the module determined by this $8$-tuple, then the modules $\MM'$ and $\MM$ are isomorphic. 
\end{theorem}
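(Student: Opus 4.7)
The strategy is to mirror the construction used in the proof of Theorem~\ref{tiso}, adapting it to the case of $n=8$ and to the three-plus-one divisibility pattern at hand. Concretely, the plan is to define an isomorphism $\varphi=(\varphi_i)_{i=0}^7$ by writing $\varphi_0=\begin{pmatrix}\alpha_0 & \beta_0 \\ \gamma_0 & \delta_0\end{pmatrix}$ and propagating through the commutativity relations $\varphi_{i+1}x_{i+1}^{\MM'}=x_{i+1}^{\MM}\varphi_i$. Exactly as in Theorem~\ref{tiso}, each $\varphi_i$ is then a specific matrix determined by $\alpha_0,\beta_0,\gamma_0,\delta_0$ and by partial sums of the $b_j$'s and $c_j$'s, subject to the constraints $t\mid\gamma_0$, $t\nmid \alpha_0$, $t\nmid\delta_0$ (to ensure invertibility) together with the three divisibility requirements
\begin{align*}
t &\mid -\alpha_0(b_1+b_2)+(c_1+c_2)\delta_0-(b_1+b_2)(c_1+c_2)t^{-1}\gamma_0,\\
t &\mid -\alpha_0(b_1+b_2+b_3+b_4)+(c_1+c_2+c_3+c_4)\delta_0-(b_1+b_2+b_3+b_4)(c_1+c_2+c_3+c_4)t^{-1}\gamma_0,\\
t &\mid -\alpha_0(b_1+\cdots+b_6)+(c_1+\cdots+c_6)\delta_0-(b_1+\cdots+b_6)(c_1+\cdots+c_6)t^{-1}\gamma_0
\end{align*}
coming from the three places where the propagation switches between the two parities.

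The first simplification is that the third condition is automatic: since $b_1+\cdots+b_6=-(b_7+b_8)$ and $c_1+\cdots+c_6=-(c_7+c_8)$ are both divisible by $t$, every summand in that line is divisible by $t$ once $t\mid\gamma_0$. So only the first two conditions need to be enforced. Setting $\beta_0=0$ and $\delta_0=1$ reduces the problem to solving the $2\times 2$ linear system
\begin{align*}
-(b_1+b_2)\,\alpha_0 \;-\;(b_1+b_2)(c_1+c_2)\,(t^{-1}\gamma_0) &= -(c_1+c_2),\\
-(b_1+b_2+b_3+b_4)\,\alpha_0 \;-\;(b_1+b_2+b_3+b_4)(c_1+c_2+c_3+c_4)\,(t^{-1}\gamma_0) &= -(c_1+c_2+c_3+c_4)
\end{align*}
for the unknowns $\alpha_0$ and $t^{-1}\gamma_0$. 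The determinant of the coefficient matrix equals $(b_1+b_2)(b_1+b_2+b_3+b_4)(c_3+c_4)$, and it is here that the hypotheses on the divisibility conditions enter in an essential way: $t\nmid b_1+b_2$ and $t\nmid c_3+c_4$ by assumption, while $t\nmid b_1+b_2+b_3+b_4=-(b_5+b_6+b_7+b_8)$ because $t\nmid b_5+b_6$ and $t\mid b_7+b_8$. Hence the determinant is a unit in $\CC[|t|]$ and the system has a unique solution, giving explicit closed formulas for $\alpha_0$ and $\gamma_0$ analogous to those displayed in the proof of Theorem~\ref{tiso}.

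It remains to verify that $\alpha_0$ obtained in this way is a unit (so $t\nmid\alpha_0$) and that $t\mid \gamma_0$, which should both be immediate consequences of the explicit formulas once the units and the factor of $t$ in $\gamma_0$ are tracked through Cramer's rule, and that the formulas for $\varphi_1,\ldots,\varphi_7$ produced by the commutativity relations are genuinely well-defined (i.e.\ all the $t^{-1}$ factors that appear cancel correctly). The determinant of every $\varphi_i$ then equals $\alpha_0\delta_0-\beta_0\gamma_0=\alpha_0$, which is a unit, so each $\varphi_i$ is invertible and $\varphi$ is an isomorphism. The main obstacle is purely bookkeeping: making sure the three divisibility conditions factor correctly so that the relevant $2\times 2$ system has invertible determinant, and verifying that the solution lands in $\CC[|t|]$ rather than its field of fractions; the argument itself is a direct transcription of the one already carried out in Theorem~\ref{tiso}.
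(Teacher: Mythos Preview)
Your proposal is correct and follows essentially the same approach as the paper: both reduce to the observation that the third divisibility condition is automatic (since $t\mid b_7+b_8$ and $t\mid c_7+c_8$), leaving a two-equation linear system for $\alpha_0$ and $t^{-1}\gamma_0$ with $\beta_0=0$, $\delta_0=1$. The only organizational difference is that the paper first rewrites the second condition modulo $t$ as $t\mid \alpha(b_5+b_6)-(c_5+c_6)\delta-(b_5+b_6)(c_5+c_6)t^{-1}\gamma$ (using $b_1+b_2+b_3+b_4\equiv -(b_5+b_6)$ since $t\mid b_7+b_8$) so as to match the computation of Theorem~\ref{tiso} verbatim, whereas you apply Cramer's rule directly to the original pair of conditions; both routes yield valid explicit isomorphisms.
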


\begin{proof} As in the proof of  Theorem~\ref{tiso} for the module  $135\mid 246$, we explicitly construct an isomorphism $\varphi=(\varphi_i)_{i=0}^7$ between the two modules, where $\varphi_i: V_i\longrightarrow W_i$, and $V_i$ and $W_i$ are the vector spaces at vertex $i$ of the modules $\MM$ and $\MM'$ respectively.  

Let us assume that $\varphi_0=\begin{pmatrix}\alpha & \beta \\ \gamma & \delta  \end{pmatrix}$. Then by repeating the same calculations as for the module $135\mid 246$ we get
\begin{align*}
\varphi_{2i+1}&= \begin{pmatrix} \alpha +(c_1+\dots +c_{2i+1})t^{-1}\gamma & \beta t-\alpha \sum_{j=1}^{2i+1} b_j+\delta \sum_{j=1}^{2i+1} c_j  -(\sum_{j=1}^{2i+1} b_j)(\sum_{j=1}^{2i+1} c_j)t^{-1}\gamma  \\ t^{-1}\gamma  & \delta -(b_1+\dots b_{2i+1})t^{-1}\gamma_0 \end{pmatrix} ,\\
\varphi_{2i}&=\begin{pmatrix} \alpha +(c_1+\dots +c_{2i})t^{-1}\gamma & \beta +t^{-1}(-\alpha \sum_{j=1}^{2i} b_j+\delta \sum_{j=1}^{2i} c_j -t^{-1}\gamma\sum_{j=1}^{2i} b_j \sum_{j=1}^{2i} c_j) \\  \gamma & \delta -(b_1+\dots +b_{2i})t^{-1}\gamma \end{pmatrix}, 
\end{align*}
where $t\mid \gamma$ and 
\begin{align*}
t&\mid -\alpha(b_1+b_2)+(c_1+c_2)\delta -(b_1+b_2)(c_1+c_2)t^{-1}\gamma,\\
t&\mid -\alpha (b_1+b_2+b_3+b_4)+(c_1+c_2+c_3+c_4)\delta  -(b_1+b_2+b_3+b_4)(c_1+c_2+c_3+c_4)t^{-1}\gamma ,\\
t&\mid -\alpha (b_1+b_2+b_3+b_4+b_5+b_6)+(c_1+c_2+c_3+c_4+c_5+c_6)\delta -t^{-1}\gamma \sum_{i=1}^6b_i \sum_{i=1}^6c_i.
\end{align*}

Since $t\mid \gamma$ and we would like $\varphi$ to be invertible, then it must be that $t\nmid \alpha$ and $t\nmid \delta$. Then the inverse of $\varphi_0$ is $\frac{1}{\alpha \delta -\beta \gamma }\begin{pmatrix} \delta &-\beta  \\  -\gamma  & \alpha  \end{pmatrix}.$

In order to find an isomorphism $\varphi$, note that the last divisibility condition is fulfilled because 
$t\mid b_7+b_8$ and $t\mid c_7+c_8$.  Also note that the condition 
$t\mid -\alpha(b_1+b_2+b_3+b_4)+(c_1+c_2+c_3+c_4)\delta -(b_1+b_2+b_3+b_4)(c_1+c_2+c_3+c_4)t^{-1}\gamma$ 
is equivalent to the condition $t\mid\alpha(b_5+b_6)-(c_5+c_6)\delta -(b_5+b_6)(c_5+c_6)t^{-1}\gamma.$  Now we repeat 
the same calculations as for the module $135\mid 246$. 

Note that  there are no conditions attached to $\beta$ so we set it to be 0.  If we set 
\begin{align*}
-\alpha (b_1+b_2)+(c_1+c_2)\delta  -(b_1+b_2)(c_1+c_2)t^{-1}\gamma &=0,\\
\alpha (b_5+b_6)-(c_5+c_6)\delta  -(b_5+b_6)(c_5+c_6)t^{-1}\gamma&=0,
\end{align*}
then we get 
$$\alpha (b_5+b_6)\left[1+\frac{c_5+c_6}{c_1+c_2} \right]-\delta (c_5+c_6)\left[1+\frac{b_5+b_6}{b_1+b_2} \right]=0.$$

If $t\mid 1+\frac{c_5+c_6}{c_1+c_2}$, then from  $\sum_{i=1}^8c_i=0$, we get $t\mid c_3+c_4$, which is not true.  The same holds for $1+\frac{b_5+b_6}{b_1+b_2}$, so both of these elements are invertible. Thus, if we set $\delta=1$, then we get $$\alpha=\frac{(c_1+c_2)(b_3+b_4)(c_5+c_6)}{(b_1+b_2)(c_3+c_4)(b_5+b_6)},$$ and 
$$\gamma=t \frac{(c_3+c_4)(b_5+b_6)-(b_3+b_4)(c_5+c_6)}{(b_1+b_2)(c_3+c_4)(b_5+b_6)}.$$
Hence, 
{\renewcommand\arraystretch{2.5}
$$\varphi_0=\begin{pmatrix} \displaystyle\frac{(c_1+c_2)(b_3+b_4)(c_5+c_6)}{(b_1+b_2)(c_3+c_4)(b_5+b_6)} & \,\,\,\,\,\,\,\, 0 \\  t \displaystyle\frac{(c_3+c_4)(b_5+b_6)-(b_3+b_4)(c_5+c_6)}{(b_1+b_2)(c_3+c_4)(b_5+b_6)} & \,\,\,\,\,\,\,\,1  \end{pmatrix}.$$}
The other invertible matrices $\varphi_i$ are now determined from the above equalities. 
Note that all of them are invertible because their determinant is equal to $\alpha\delta-\beta\gamma$ which is an 
invertible element.  
\end{proof}

{We denote the unique module (up to isomorphism) from Theorem~\ref{M1234} by $\MM_{7,8}$}. It is obvious, due to 
the symmetry of the arguments, that there are also modules $\MM_{1,2}$, $\MM_{3,4}$ and $\MM_{5,6}$ that correspond 
to the remaining three possible divisibility conditions for Case 1, e.g.\ $\MM_{1,2}$ corresponds to the case when 
$t\mid b_1+b_2$, $t\nmid b_3+b_4$,  $t\nmid b_5+b_6$, and  $t\nmid b_7+b_8$. In the next statement we prove that no 
two of these modules are isomorphic to each other. 

\begin{prop} \label{M12}
There are no isomorphic modules  amongst $\MM_{1,2}$, $\MM_{3,4}$, $\MM_{5,6}$ and $\MM_{7,8}$. 
\end{prop}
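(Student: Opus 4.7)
The plan is to reuse the argument from the proof of Theorem~\ref{thm:3-cond-iso}: an isomorphism $\varphi:\MM\to\MM'$ between the modules with coefficients $(b_i)$ and $(c_i)$ produces at vertex $0$ a matrix $\varphi_0=\begin{pmatrix}\alpha&\beta\\ \gamma&\delta\end{pmatrix}$ with $t\mid\gamma$ and both $\alpha,\delta$ units in $\CC[[t]]$ (by invertibility of $\varphi_0$), together with three divisibility conditions
\begin{equation*}
t\mid \alpha S_i-\delta T_i+S_i T_i g,\qquad i=1,2,3,
\end{equation*}
where $B_j=b_{2j-1}+b_{2j}$, $C_j=c_{2j-1}+c_{2j}$, $S_i=B_1+\cdots+B_i$, $T_i=C_1+\cdots+C_i$, and $g=\gamma/t\in\CC[[t]]$. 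The goal in each pairwise comparison is to derive a contradiction with the invertibility of $\alpha$ or $\delta$.

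The first step is the observation that $t\mid S_i\Longleftrightarrow t\mid T_i$: if $t\mid S_i$, the $i$-th condition collapses modulo $t$ to $t\mid\delta T_i$, forcing $t\mid T_i$ (since $\delta$ is a unit), and the converse follows by the symmetric argument applied to $\varphi^{-1}$. Using $B_1+B_2+B_3+B_4=0$ and the hypothesis that in $\MM_{2r-1,2r}$ only $B_r$ is divisible by $t$, a direct inspection shows that $t\mid S_1$ iff the module is $\MM_{1,2}$ (as $S_1=B_1$), $t\mid S_3$ iff the module is $\MM_{7,8}$ (as $S_3=-B_4$), and $S_2=B_1+B_2$ is a unit in all four cases. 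This invariant immediately separates $\MM_{1,2}$ and $\MM_{7,8}$ from each of the other three modules, leaving only the comparison of $\MM_{3,4}$ with $\MM_{5,6}$.

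For this final case, assume $t\mid B_2$ and $t\mid C_3$, with all other $B_r,C_r$ units. Taking the difference of conditions $2$ and $1$ and reducing modulo $t$ using $t\mid B_2$ yields $C_2(\delta-B_1 g)\equiv 0\pmod t$, hence $\delta\equiv B_1 g\pmod t$. Similarly, the difference of conditions $3$ and $2$ combined with $t\mid C_3$ gives $B_3(\alpha+T_2 g)\equiv 0\pmod t$, whence $\alpha\equiv -(C_1+C_2)g\pmod t$. Substituting these back into condition $1$ collapses the left-hand side to $-B_1(C_1+C_2)g$; since $B_1$ and $C_1+C_2\equiv -C_4$ are both units, this forces $t\mid g$, and then $\delta\equiv B_1 g\equiv 0\pmod t$ contradicts the invertibility of $\delta$. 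The main obstacle throughout is precisely this last step, where the coarse $S_i$-invariant no longer distinguishes the two modules and one has to combine all three conditions, exploiting the constraints $\sum B_r=\sum C_r=0$ to propagate the obstruction through $g$ to $\delta$.
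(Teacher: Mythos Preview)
Your proof is correct. The paper takes a shorter but less explicit route: it only argues that $\MM_{7,8}$ is not isomorphic to any of the other three modules, by observing that $t\mid b_7+b_8$ together with the third divisibility condition forces $t\mid c_7+c_8$ (this is exactly your observation $t\mid S_3\Leftrightarrow t\mid T_3$), and then invokes ``symmetry of the arguments'' (essentially the rotation of the quiver by two vertices, which cyclically permutes the four modules) to cover the remaining pairs. Your approach is more self-contained: you promote the biconditional $t\mid S_i\Leftrightarrow t\mid T_i$ to an invariant which already isolates $\MM_{1,2}$ and $\MM_{7,8}$, and then treat the residual comparison $\MM_{3,4}$ versus $\MM_{5,6}$ head-on by combining all three conditions. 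The paper's version is slicker but leans on an unstated symmetry; yours is longer but avoids that appeal entirely. One minor remark: for the converse direction ($t\mid T_i\Rightarrow t\mid S_i$) you do not actually need to pass to $\varphi^{-1}$, since the same condition reduces modulo $t$ to $t\mid\alpha S_i$ directly when $t\mid T_i$.
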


\begin{proof} Due to the symmetry of the arguments, we only show that $\MM_{7,8}$ is not isomorphic to any of the other 
modules. Assume otherwise, that $\MM_{7,8}$ is isomorphic to $\MM_{i,i+1}$, where $i$ is $1$, $3$ or $5$. Then there is an 
isomorphism between these two modules. Keeping the same notation from the proof of the Theorem~\ref{thm:3-cond-iso}, 
we have that this isomorphism has to satisfy the following divisibility conditions:
\begin{align*}
t&\mid -\alpha (b_1+b_2)+(c_1+c_2)\delta -(b_1+b_2)(c_1+c_2)t^{-1}\gamma,\\
t&\mid -\alpha (b_1+b_2+b_3+b_4)+(c_1+c_2+c_3+c_4)\delta  -(b_1+b_2+b_3+b_4)(c_1+c_2+c_3+c_4)t^{-1}\gamma ,\\
t&\mid -\alpha (b_1+b_2+b_3+b_4+b_5+b_6)+(c_1+c_2+c_3+c_4+c_5+c_6)\delta -t^{-1}\gamma \sum_{i=0}^6 b_i\sum_{i=0}^6 c_i.
\end{align*} Here, the coefficients $b_j$ correspond to $\MM_{7,8}$ and $c_j$ correspond to $\MM_{i,i+1}$. Since $t\mid b_1+b_2+b_3+b_4+b_5+b_6=-(b_7+b_8)$, from the last condition it follows that $t\mid \delta (c_1+c_2+c_3+c_4+c_5+c_6)$. Since $t\nmid \delta$, it must be $t\mid (c_1+c_2+c_3+c_4+c_5+c_6)$. Then $t\mid c_7+c_8=-(c_1+c_2+c_3+c_4+c_5+c_6)$ which is in contradiction with our assumption that $t\nmid c_7+c_8$. 
\end{proof}
  
It follows from the previous proposition that we have now constructed four non-isomorphic rank 2 modules whose filtration is $L_{1357}\mid L_{2468}$. {Before we show that in fact there are infinitely many,  
we consider the other two cases for the divisibility conditions.}

\subsection{Exactly two of the sums are divisible by $t$ (Case 2)}

There are two subcases. The first subcase is when the divisible sums are consecutive, e.g.\ when $t\mid b_1+b_2$, 
$t\mid b_3+b_4$, $t\nmid b_5+b_6$ and $t\nmid b_7+b_8$. The second subcase is when the divisible sums are 
not consecutive, e.g.\ when $t\mid b_1+b_2$, $t\mid b_5+b_6$, $t\nmid b_3+b_4$ and $t\nmid b_7+b_8$. 

Assume first that $t\mid b_1+b_2$, $t\mid b_3+b_4$,  $t\nmid b_5+b_6$, and  $t\nmid b_7+b_8$. 

\begin{theorem}
If $t\mid b_1+b_2$, $t\mid b_3+b_4$,  $t\nmid b_5+b_6$, and  $t\nmid b_7+b_8$, then the module $\MM$ is isomorphic to $L_{\{1,3,5,6\}}\oplus L_{\{2,4,7,8\}}$. 
\end{theorem}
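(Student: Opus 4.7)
The strategy mirrors Proposition~\ref{sumica}: produce a non-trivial idempotent endomorphism $\varphi=(\varphi_i)$ of $\MM$, then identify the two rank~$1$ summands by finding eigenvector generators at each vertex and reading off which of the arrows $x_i$ act on these generators by units versus by $t$ times units.

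First I would construct the idempotent. Using the $n=8$ analogue of Proposition~\ref{lm:n6-hom} with $\varphi_0=\begin{pmatrix}a&b\\c&d\end{pmatrix}$, an endomorphism must satisfy $t\mid c$ and the three divisibility conditions listed in~(\ref{div}). The hypotheses $t\mid b_1+b_2$ and $t\mid b_3+b_4$ render the first two of these automatic. Setting $a=1,\,b=0,\,d=0$, and using that $b_1+\cdots+b_6=-(b_7+b_8)$ is a unit in $\CC[|t|]$ since $t\nmid b_7+b_8$, the third condition reduces to $c=t(b_7+b_8)^{-1}$. This produces the non-trivial idempotent
\[
\varphi_0=\begin{pmatrix}1&0\\ t(b_7+b_8)^{-1}&0\end{pmatrix},
\]
so $\MM$ is decomposable.

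Next I would determine the summands by propagating eigenvectors. Let $v_0=[1,\,t(b_7+b_8)^{-1}]^t$ and $w_0=[0,\,1]^t$ be the eigenvectors of $\varphi_0$ with eigenvalues $1$ and $0$. At each vertex $i$, rescale so that the generators $v_i,w_i$ are unit vectors in $\CC[|t|]^2$ (i.e.\ have at least one unit coordinate); the scalar $\alpha_i$ defined by $x_iv_{i-1}=\alpha_iv_i$ is then either a unit (giving $i\in X$) or $t$ times a unit (giving $i\notin X$), and similarly for the $w$-series defining $Y$. A short induction, using that $b_1+b_2$ and $b_3+b_4$ are divisible by $t$ while $b_7+b_8$ is a unit, yields $v_1=[b_7+b_8+b_1,\,1]^t$ and $v_2=[b_7+b_8+b_1+b_2,\,t]^t$, with first coordinate a unit at $i=2,4$ and not at $i=1,3,5$. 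The key step is at $i=6$: since $b_7+b_8+b_1+\cdots+b_6=\sum_{i=1}^8b_i=0$, one has $x_6v_5=[0,t]^t=t\cdot[0,1]^t$, so $v_6=[0,1]^t$ and the eigenvector jumps to the opposite coordinate line. Thereafter $x_7v_6=[b_7,1]^t$ and $x_8v_7=[b_7+b_8,\,t]^t=(b_7+b_8)\cdot v_0$, both with unit scalar, closing the loop. Collecting the indices where $\alpha_i$ is a unit gives $X=\{2,4,7,8\}$, and an entirely parallel computation for the $w_i$ (starting from $w_0=[0,1]^t$) yields $Y=\{1,3,5,6\}$. Hence $\MM\cong L_{\{2,4,7,8\}}\oplus L_{\{1,3,5,6\}}$.

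The main step requiring care is the jump of the eigenspace at vertex~$6$: unlike in Proposition~\ref{sumica}, the generator of $L_X$ does not remain in a fixed coordinate line as one traverses the cycle, and it is precisely the cancellation $b_1+\cdots+b_6=-(b_7+b_8)$, together with the positions of the divisible pairs $b_{2j-1}+b_{2j}$, that forces the specific index sets $X$ and $Y$. Once this swap is tracked correctly, reading off the $\alpha_i$'s and $\beta_i$'s is routine.
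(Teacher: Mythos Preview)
Your proof is correct and follows the same approach as the paper's: construct a non-trivial idempotent from the divisibility constraints~(\ref{div}), then identify the summands by propagating eigenvectors through the $x_i$'s. The only cosmetic difference is the choice of lower-left entry---the paper takes $c=-t(b_5+b_6)^{-1}$ while you take $c=t(b_7+b_8)^{-1}$ (exploiting $\sum b_i=0$ to make the third condition vanish identically rather than merely modulo $t$); these agree modulo $t^2$ and either choice yields the same decomposition.
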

\begin{proof} We show that $\MM$ is decomposable by constructing a non-trivial idempotent endomorphism of $\MM$. Recall that an endomorphism $\varphi=(\varphi_i)_{i=0}^7$ of $\MM$, where  $\varphi_0=\begin{pmatrix}a & b \\ c & d  \end{pmatrix}$, satisfies the divisibility conditions  (\ref{div}).

Since  $t\mid b_1+b_2$, $t\mid b_3+b_4$, these conditions reduce to a single condition $t\mid (d-a)(b_5+b_6)-(b_5+b_6)^2t^{-1}c$. From $t\nmid b_5+b_6$ we conclude that $t\mid (d-a)-(b_5+b_6)t^{-1}c$. 

To construct a non-trivial idempotent homomorphism, as in the case $n=6$, we set $\alpha=1$, $\delta=0=\beta$, and $\gamma=-t(b_5+b_6)^{-1}$. Thus, 
$$\varphi_0=\begin{pmatrix}
1& 0\\
-t(b_5+b_6)^{-1}&0
\end{pmatrix}.$$ 
 Its orthogonal complement is the idempotent
 $$
 \begin{pmatrix}
0& 0\\
t(b_5+b_6)^{-1}&1
\end{pmatrix}.$$ 
Since these are non-trivial idempotents, it follows that the module $\MM$ is decomposable. It remains to show that $\MM\cong L_{\{2,4,7,8\}}\oplus L_{\{1,3,5,6\}}.$  We know that $\MM$ is the direct sum of rank 1 modules $L_X$ and $L_Y$ for some $X$ and $Y$. Let us determine $X$ and $Y$. For this, we take, at vertex $i$,  eigenvectors $v_i$ and $w_i$  corresponding to the eigenvalue $1$ of  the idempotents $\varphi_i$ and $1-\varphi_i$ respectively.  For example, $v_0=[1\,\, ,\,\, -t(b_5+b_6)^{-1}]^t$, $w_0=[0\,\, ,\,\, 1]^t$, $v_1= [ 1-b_1(b_5+b_6)^{-1}  \,\, ,\,\, -(b_5+b_6)^{-1} ]^t$ and  $w_1= [b_1  \,\, , \,\, 1  ]^t$, and so on. A basis for $L_X$ is $\{v_i \mid i=0,\ldots, 7\}$, and a basis for $L_Y$ is $\{w_i \mid i=0,\ldots, 7\}$. Direct computation gives us that $x_1v_0=tv_1$, $x_2v_1=v_2$, $x_3v_2=tv_3$, $x_4v_3=v_4$, $x_5v_4=tv_5$, $x_6v_5=tv_6$,  $x_7v_6=v_7$, $x_8v_7=v_0$. Thus, $X=\{2,4,7,8\}$. Analogously, $Y=\{1,3,5,6\}.$  
 \end{proof}

\begin{rem} In the case when $t\nmid b_1+b_2$, $t\mid b_3+b_4$, $t\mid b_5+b_6$, and $t\nmid b_7+b_8$,  $\MM$ is the direct sum $L_{\{3,5,7,8\}}\oplus L_{\{1,2,4,6\}}$. Similarly, by suitable renaming of the vertices of the quiver, we obtain two more direct sums $L_{\{1,2,5,7\}}\oplus L_{\{3,4,6,8\}}$ and $L_{\{1,3,4,7\}}\oplus L_{\{2,5,6,8\}}$ that have $L_J$ as a submodule and $L_I$ as a quotient module, and there are short exact sequences with $L_I$ and $L_J$ as end terms: 
\begin{align*}
(a) \quad 0\longrightarrow L_J\longrightarrow L_{\{1, 3, 5, 6\}}\oplus L_{\{2,4, 7, 8 \}} \longrightarrow L_I \longrightarrow 0,\\
(b) \quad 0\longrightarrow L_J\longrightarrow  L_{\{3,5,7,8\}} \oplus L_{\{1,2,4,6\}}  \longrightarrow L_I \longrightarrow 0,\\
(c) \quad 0\longrightarrow L_J\longrightarrow    L_{\{1,2,5,7\}}\oplus L_{\{3,4,6,8\}}    \longrightarrow L_I \longrightarrow 0,\\
(d) \quad 0\longrightarrow L_J\longrightarrow      L_{\{1,3,4,7\}}\oplus L_{\{2,5,6,8\}}  \longrightarrow L_I \longrightarrow 0.
\end{align*}

{Here, (a) is the case where $t\mid (b_1+b_2)$ and $t\mid (b_3+b_4)$, (b) the case where $t\mid (b_3+b_4)$ and 
$t\mid (b_5+b_6)$, (c) the case where $t\mid (b_5+b_6)$ and $t\mid (b_7+b_8)$, and (d) the case $t\mid (b_7+b_8)$ 
and $t\mid (b_1+b_2)$.}
The {pairs of profiles} of the four {decomposable} modules that appear in the middle in these short exact 
sequences can be pictured as follows: 

\begin{figure}[H]
\begin{center}
\subfloat[$L_{\{1, 3, 5,6\}}\oplus L_{\{2,4, 7,8\}}$]{\includegraphics[width = 6cm]{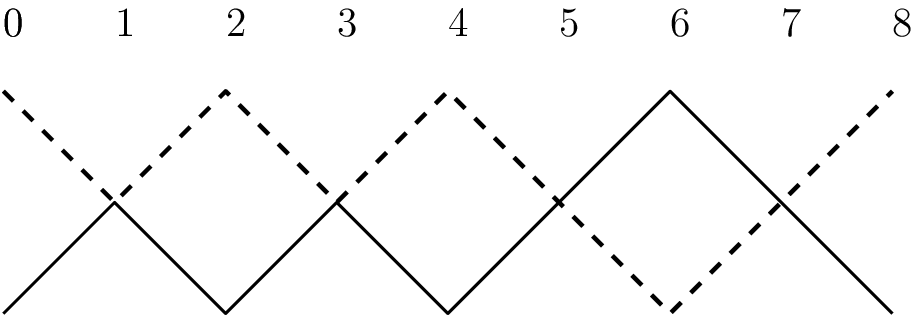}}  \quad \quad\quad\quad 
\subfloat[$L_{\{3,5, 7,8 \}}\oplus L_{\{1, 2, 4, 6\}}$]{\includegraphics[width = 6cm ]{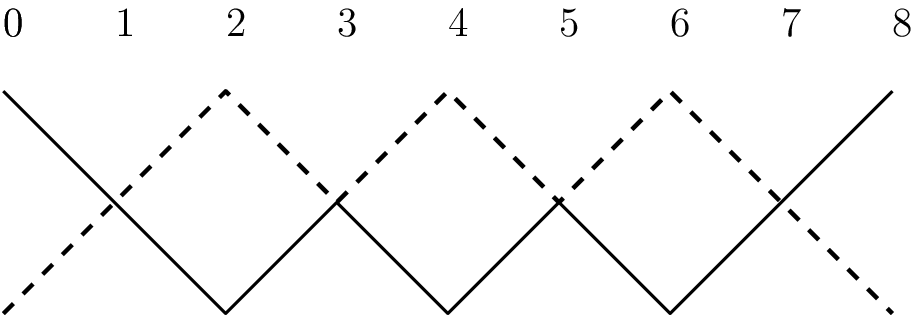}}\\
\subfloat[$L_{\{1, 2, 5 , 7\}}\oplus L_{\{3,4, 6, 8\}}$]{\includegraphics[width = 6cm ]{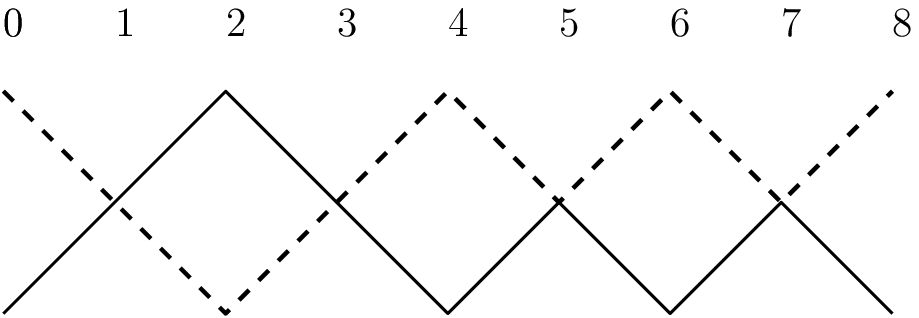}}  \quad \quad \quad \quad
\subfloat[$L_{\{1, 3, 4, 7\}}\oplus L_{\{2,5,6, 8\}}$]{\includegraphics[width = 6cm ]{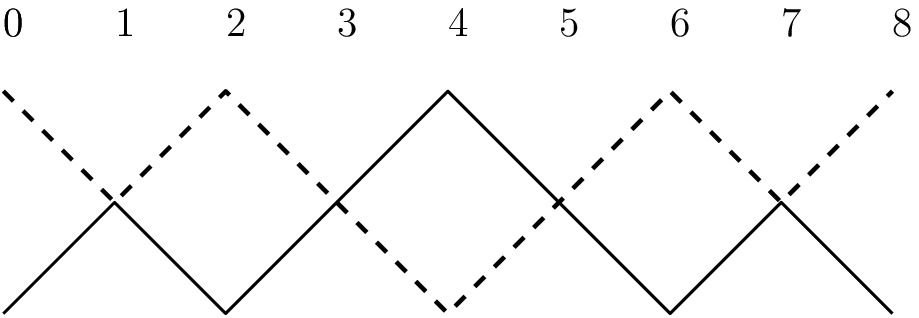}} 
\caption{The {pairs of} profiles of decomposable extensions between $ L_{\{1, 3, 5,7\}}$ and $L_{\{2,4, 6,8\}}$.}
\end{center}
\end{figure}
\end{rem}

Assume now that $t\mid b_1+b_2$, $t\mid b_5+b_6$,  $t\nmid b_3+b_4$, and  $t\nmid b_7+b_8$. 

\begin{theorem}
If $t\mid b_1+b_2$, $t\mid b_5+b_6$,  $t\nmid b_3+b_4$, and  $t\nmid b_7+b_8$, then the module $\MM$ is isomorphic to $L_{\{1,3,4,6\}}\oplus L_{\{2,5,7,8\}}$. 
\end{theorem}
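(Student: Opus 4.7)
The plan is to mirror the strategy used in the proof of the previous theorem (the consecutive-divisible-pairs case), but with a careful analysis of the divisibility conditions~(\ref{div}) under the new hypothesis. Let $\varphi=(\varphi_i)_{i=0}^7$ be an endomorphism of $\MM$ with $\varphi_0=\begin{pmatrix} a & b \\ c & d\end{pmatrix}$. Since $t\mid b_1+b_2$, the first condition in (\ref{div}) is automatic. The key algebraic simplification is that, modulo~$t$ and using $t\mid c$, both $t\mid b_1+b_2$ and $t\mid b_5+b_6$ allow one to replace $b_1+b_2+b_3+b_4$ and $b_1+b_2+b_3+b_4+b_5+b_6$ by $b_3+b_4$ in the remaining two conditions of~(\ref{div}). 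Consequently the three conditions collapse to the single divisibility
\[
t \mid (d-a)(b_3+b_4) - (b_3+b_4)^2 t^{-1} c,
\]
and since $t\nmid b_3+b_4$, this is equivalent to $t \mid (d-a) - (b_3+b_4)\, t^{-1}c$.

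Following the same recipe as in the analogous theorem for $t\mid b_1+b_2$, $t\mid b_3+b_4$, I would then exhibit a non-trivial idempotent by choosing $a=1$, $d=0$, $b=0$ and $c=-t(b_3+b_4)^{-1}$, so that
\[
\varphi_0 = \begin{pmatrix} 1 & 0 \\ -t(b_3+b_4)^{-1} & 0 \end{pmatrix}, \qquad 1-\varphi_0 = \begin{pmatrix} 0 & 0 \\ t(b_3+b_4)^{-1} & 1 \end{pmatrix}.
\]
Both are non-trivial idempotents, so $\MM$ decomposes as $L_X\oplus L_Y$ for some $4$-subsets $X,Y$.

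To identify $X$ and $Y$, I would follow the same eigenvector procedure as in Proposition~\ref{sumica} and in the previous theorem. At each vertex $i$ one extracts eigenvectors $v_i$ and $w_i$ corresponding to the eigenvalue $1$ of $\varphi_i$ and $1-\varphi_i$ respectively (these can be propagated from $v_0=[1,-t(b_3+b_4)^{-1}]^T$ and $w_0=[0,1]^T$ using the formulas for $\varphi_i$ from the analog of Proposition~\ref{lm:n6-hom}). Then one checks, arrow by arrow, whether $x_i$ sends $v_{i-1}$ to $v_i$ or to $tv_i$ (and similarly for the $w$'s). A vertex $i$ belongs to the rim of $L_X$ precisely when $x_i v_{i-1}=v_i$. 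The expected outcome, consistent with the pattern $X=\{2,5,7,8\}$ and $Y=\{1,3,4,6\}$, is that the pairs $(1,2)$ and $(5,6)$ split exactly as in the original $I\mid J$ (since $t\mid b_1+b_2,\, t\mid b_5+b_6$ kills the off-diagonal contribution there), while the pairs $(3,4)$ and $(7,8)$ are each absorbed entirely into a single summand (both into $Y$ and both into $X$ respectively).

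The main obstacle is purely bookkeeping: verifying by direct computation that $x_i v_{i-1}$ lands at the right height, i.e.\ carrying out the eigenvector propagation through all eight vertices to confirm that the peaks of the two rims are exactly $\{1,3,4,6\}$ and $\{2,5,7,8\}$. No new conceptual input is needed beyond the reduction of the endomorphism conditions above and the formulas for the $\varphi_i$, both of which are already available from the proofs of Theorems~\ref{M1234} and~\ref{thm:3-cond-iso}.
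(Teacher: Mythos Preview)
Your proposal is correct and follows essentially the same route as the paper. The paper's proof is very terse: it simply notes that the divisibility conditions reduce to $t\mid (d-a)-(b_3+b_4)t^{-1}c$, writes down the same idempotent $\varphi_0=\begin{pmatrix}1&0\\-t(b_3+b_4)^{-1}&0\end{pmatrix}$, and then defers to the analogous earlier computation for the identification of the summands via eigenvectors. Your explanation of \emph{why} the three conditions collapse (replacing $b_1+b_2+b_3+b_4$ and $b_1+\cdots+b_6$ by $b_3+b_4$ modulo $t$) is more explicit than the paper's, but the argument and the resulting idempotent are identical.
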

\begin{proof} The only difference from the proof of the previous statement is that the divisibility conditions are now reduced to the condition $t\mid (d-a)-(b_3+b_4)t^{-1}c$. To construct a non-trivial idempotent homomorphism, we set $\alpha=1$, $\delta=0=\beta$, and $\gamma=-t(b_3+b_4)^{-1}$. Thus, 
$$\varphi_0=\begin{pmatrix}
1& 0\\
-t(b_3+b_4)^{-1}&0
\end{pmatrix},$$  
and the rest of proof is analogous to the proof of the previous statement. 
 \end{proof}

\begin{rem} In the case when $t\nmid b_1+b_2$, $t\mid b_3+b_4$, $t\nmid b_5+b_6$, and $t\mid b_7+b_8$,  $\MM$ is the direct sum $L_{\{1,2,4,7\}}\oplus L_{\{3,5,6,8\}}$. 
The {pairs of} profiles of these two modules can be pictured as follows: 
\begin{figure}[H]
\begin{center}
\subfloat[$L_{\{1, 3, 4,6\}}\oplus L_{\{2,5, 7,8\}}$]{\includegraphics[width = 6cm]{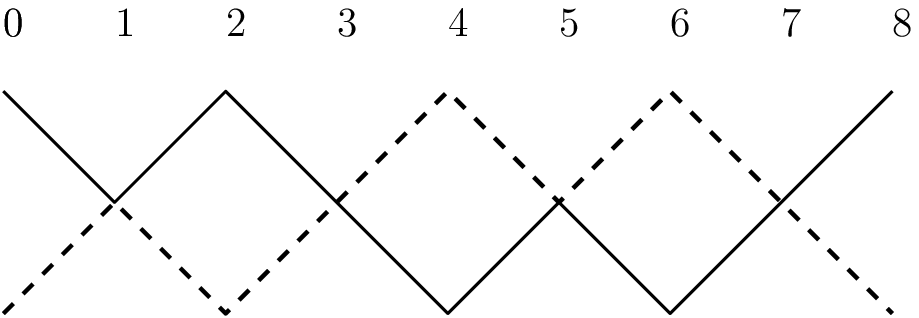}}  \quad \quad\quad\quad 
\subfloat[$L_{\{1, 2, 4, 7\}}\oplus L_{\{3,5, 6,8 \}}$]{\includegraphics[width = 6cm ]{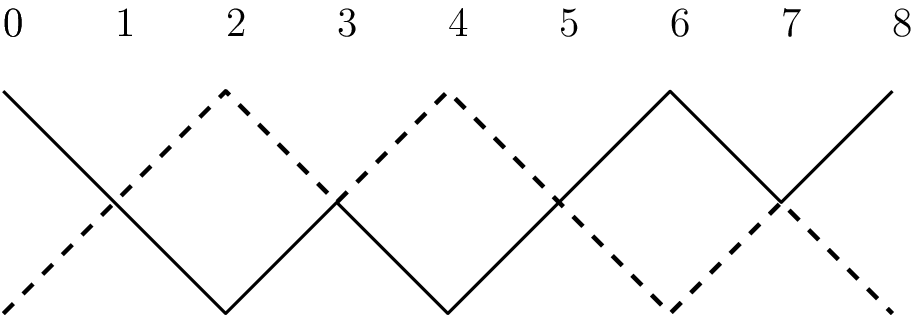}}
\caption{The {pairs of} profiles of decomposable extensions between $ L_{\{1, 3, 5,7\}}$ and $L_{\{2,4, 6,8\}}$.}
\end{center}
\end{figure}
\end{rem}

\subsection{None of the four sums is divisible by $t$ (Case 3)} 

There are {two subcases we have to consider. The first subcase is when all sums $b_i+b_{i+1}+b_{i+2}+b_{i+3}$, 
for $i=1,3,5,7$,  are divisible by $t$, the second subcase is when at least one of these sums is not divisible by $t$. 
In the latter case, we get infinitely many non-isomorphic indecomposable modules as we will show}. 
In this subsection, we always assume that none of the four divisibility conditions $t\mid b_i+b_{i+1}$, 
$i$ odd,  which we continue to abbreviate as {\rm (div)}, is satisfied. 

We first consider the case where all sums $b_i+b_{i+1}+b_{i+2}+b_{i+3}$ are divisible by $t$. 

\begin{theorem}
{Assume that the $(b_i)_i$ satisfy none of the four divisibility conditions {\rm (div)} but that} 
$t\mid b_i+b_{i+1}+b_{i+2}+b_{i+3}$, for $i=1,3,5,7$. Then $\MM\cong L_{\{1,2,5,6\}}\oplus L_{\{3,4,7,8\}} .$
\end{theorem}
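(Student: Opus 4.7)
The plan is to adapt the construction used in Proposition~\ref{sumica} (and re-used throughout Section~\ref{sec:tight-3}): first exhibit an explicit non-trivial idempotent endomorphism of $\MM$, and then read off the two rank 1 summands from the eigenvectors at each vertex. So the real work is to show that a non-trivial idempotent exists under precisely the stated hypotheses, and then to identify the two resulting rims.

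For the existence of the idempotent, I would start from the general form of an endomorphism $\varphi$ of $\MM$ described at the beginning of the section, parametrised by $\varphi_0=\begin{pmatrix}a&b\\c&d\end{pmatrix}$ with $t\mid c$ and satisfying the three divisibility relations~(\ref{div}). Under the present hypotheses the second relation in~(\ref{div}) becomes vacuous, because $t\mid b_1+b_2+b_3+b_4$ forces both summands to be divisible by $t$ once $t\mid c$. The first and third relations reduce (using $t\nmid b_1+b_2$ and $t\nmid b_7+b_8$, and $b_1+\cdots+b_6=-(b_7+b_8)$) to
\[
t\mid (d-a)-(b_1+b_2)t^{-1}c\quad\text{and}\quad t\mid (d-a)+(b_7+b_8)t^{-1}c.
\]
Subtracting these two conditions yields $t\mid \bigl((b_1+b_2)+(b_7+b_8)\bigr)t^{-1}c$, and since $(b_1+b_2)+(b_7+b_8)=-(b_3+b_4+b_5+b_6)$ is divisible by $t$ by hypothesis, this is automatic. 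Thus the ansatz $a=1$, $d=0$, $b=0$, and $c=-t(b_1+b_2)^{-1}$ (using invertibility of $b_1+b_2$) satisfies all three relations and clearly gives an idempotent $\varphi_0$; the other $\varphi_i$ are then forced by the commutation rules. The orthogonal idempotent $1-\varphi_0$ is likewise non-trivial, so $\MM$ decomposes.

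To identify the two rank 1 summands $L_X$ and $L_Y$, I would follow the strategy of Proposition~\ref{sumica}: take $v_0=(1,-t(b_1+b_2)^{-1})^{t}$, a generator of the image of $\varphi_0$, and define $v_i$ recursively by normalising $x_i v_{i-1}$ so that it is not divisible by $t$ (the possible normalisation factor $t$ vs.\ $1$ records whether $i\in X$ or $i\notin X$). A short inductive computation, using $t\mid b_3+b_4+b_5+b_6$ at vertex $6$ to pull out an extra factor of $t$ and $\sum b_i=0$ at vertex $7$ to cancel it, shows that $x_i v_{i-1}=tv_i$ for $i=1,2,5,6$ and $x_i v_{i-1}=v_i$ for $i=3,4,7,8$, with the circuit closing correctly as $x_8 v_7=v_0$. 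Hence $X=\{3,4,7,8\}$, and taking $w_0=(0,1)^{t}$ in the complementary image yields $Y=\{1,2,5,6\}$. This proves $\MM\cong L_{\{1,2,5,6\}}\oplus L_{\{3,4,7,8\}}$.

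The only subtle point is not the construction of $\varphi_0$ (which is a direct ansatz) but the verification that the third divisibility relation closes up, which is precisely where the hypothesis $t\mid b_3+b_4+b_5+b_6$ is used; once this is in hand, the eigenvector propagation is routine and the two rims emerge automatically. The rest is elementary matrix computation of the type already carried out in Sections~\ref{sec:tight-3} and~\ref{sec:non-tight3}, so I would omit the details in the same spirit as the earlier arguments.
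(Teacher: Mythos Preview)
Your proposal is correct and follows essentially the same approach as the paper: exhibit the idempotent $\varphi_0=\begin{pmatrix}1&0\\-t(b_1+b_2)^{-1}&0\end{pmatrix}$ by checking that the three relations~(\ref{div}) collapse under the hypotheses, then identify the summands via the eigenvector propagation of Proposition~\ref{sumica}. The paper condenses the reduction slightly further (observing that the first and third conditions in~(\ref{div}) become equivalent modulo $t$ since $b_1+b_2\equiv b_5+b_6$) and then simply invokes the previous two proofs for the identification of the rims, whereas you spell out the $v_i$-computation; but the argument is the same.
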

\begin{proof}
As before, we will construct a non-trivial idempotent endomorphism of $\MM$ to prove that it is a decomposable module. Also, as before, our endomorphism has to satisfy the conditions (\ref{div}). 

From $t\mid b_1+b_{2}+b_{3}+b_{4}$ it follows that the conditions (\ref{div}) reduce to a single condition 
$t\mid (d-a)-(b_1+b_2)t^{-1}c.$ Note that this condition is equivalent to the condition $t\mid (d-a)-(b_5+b_6)t^{-1}c.$ As before, we obtain a non-trivial idempotent 
$$\varphi_0=\begin{pmatrix}
1& 0\\
-t(b_1+b_2)^{-1}&0
\end{pmatrix}.$$ 
The rest of the proof is analogous to the proof of the previous two statements. 
\end{proof}

\begin{rem} 
The {pairs of} profiles of $L_{\{1,2,5,6\}}\oplus L_{\{3,4,7,8\}}$ can be pictured as follows: 
\begin{figure}[H]
\begin{center}
{\includegraphics[width = 6cm]{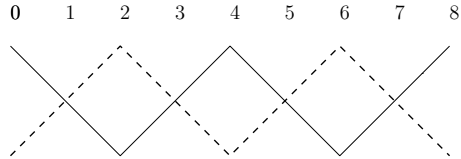}} 
\caption{The {pairs of} profile of a decomposable extension between $ L_{\{1, 3, 5,7\}}$ and $L_{\{2,4, 6,8\}}$.}
\end{center}
\end{figure}
\end{rem}

Assume now that  {for the tuple $(b_i)_i$ one of the consecutive sums of four entries is not divisible by $t$}. 
\begin{prop}\label{prop:more-indec}
{If none of the four divisibility conditions {\rm (div)} holds for $(b_i)_i$ and if there exists an $i\in \{1,3,5,7\}$ 
such that} 
$t\nmid b_i+b_{i+1}+b_{i+2}+b_{i+3}$, 
then $\MM$ is indecomposable.
\end{prop}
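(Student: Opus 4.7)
My plan is to show that under the hypotheses, any idempotent endomorphism $\varphi = (\varphi_i)_{i=0}^{7}$ of $\MM$ is trivial. Setting $\varphi_0 = \begin{pmatrix} a & b \\ c & d \end{pmatrix}$ and $\sigma_j = b_1+\cdots+b_{2j}$ for $j=1,2,3$, the description of endomorphisms from the start of Section~\ref{sec:4-interlacing} yields $t \mid c$ together with the three conditions $t \mid (d-a)\sigma_j - \sigma_j^2 t^{-1}c$ from~(\ref{div}). The hypothesis that none of the four divisibility conditions~{\rm (div)} holds says that $\sigma_1$, $\sigma_2 - \sigma_1$, $\sigma_3 - \sigma_2$ and $-\sigma_3 = b_7+b_8$ (the last using $\sum b_i = 0$) are all units of $\mathbb{C}[[t]]$; in particular $\sigma_1$ and $\sigma_3$ are units. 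A short bookkeeping, again invoking $\sum b_i = 0$, identifies the four cyclic sums $b_i+b_{i+1}+b_{i+2}+b_{i+3}$, $i \in \{1,3,5,7\}$, with $\pm \sigma_2$ and $\pm(\sigma_3 - \sigma_1)$, so the extra hypothesis of the proposition becomes: either $\sigma_2$ or $\sigma_3 - \sigma_1$ is a unit.

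If $\sigma_2$ is a unit, I divide the first two conditions of~(\ref{div}) by the units $\sigma_1$ and $\sigma_2$ to obtain $t \mid (d-a) - \sigma_j t^{-1}c$ for $j = 1, 2$, and then subtract to get $t \mid (\sigma_2 - \sigma_1)\, t^{-1} c = (b_3+b_4)\, t^{-1} c$. Since $b_3+b_4$ is a unit by hypothesis, this forces $t \mid t^{-1} c$, i.e.\ $t^2 \mid c$; substituting back yields $t \mid d - a$. The case where $\sigma_3 - \sigma_1$ is a unit is entirely analogous, using the first and third conditions of~(\ref{div}) and the fact that $\sigma_3 - \sigma_1 = b_3+b_4+b_5+b_6$ is a unit; again one concludes $t^2 \mid c$ and $t \mid d - a$.

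It remains to combine $t \mid c$, $t \mid d-a$ and $\varphi_0^2 = \varphi_0$ to force $\varphi_0 \in \{0,\mathrm{Id}\}$. Cayley--Hamilton together with the idempotent equation gives $(a+d-1)\varphi_0 = (ad-bc)\,\mathrm{Id}$. As $\mathbb{C}[[t]]$ is a domain, either $a+d \ne 1$, in which case the off-diagonal entries force $b = c = 0$, so that $a^2 = a$ and $d^2 = d$ give $a, d \in \{0,1\}$ and $t \mid d - a$ then forces $a = d$; or $a+d = 1$, in which case $bc = ad$, while $t \mid d-a$ together with $a + d = 1$ force $a \equiv d \equiv \tfrac{1}{2} \pmod{t}$, making $ad$ and hence $bc$ a unit, contradicting $t \mid c$. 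Either way $\varphi_0$ is trivial, so $\MM$ is indecomposable.

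The main obstacle is not technical but organizational: one needs to recognize the clean correspondence between the four cyclic four-term sums of the hypothesis and the three partial sums $\sigma_j$ in~(\ref{div}), and to see that after invoking $\sum b_i = 0$ the problem reduces to just two symmetric cases. Once that is done, the divisibility manipulations and the final idempotent argument are routine repetitions of those used in Theorem~\ref{t6} and Theorem~\ref{M1234}.
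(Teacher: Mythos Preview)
Your proof is correct and follows essentially the same approach as the paper. The paper assumes without loss of generality (by the cyclic symmetry of the quiver) that $t\nmid b_1+b_2+b_3+b_4$, then uses the first two conditions of~(\ref{div}) to obtain $t\mid(b_3+b_4)t^{-1}c$, whence $t^2\mid c$ and $t\mid d-a$; you instead spell out the two essentially distinct cases $t\nmid\sigma_2$ and $t\nmid\sigma_3-\sigma_1$ and use the first and second (resp.\ first and third) conditions. Your Cayley--Hamilton argument for ruling out a nontrivial idempotent with $t\mid c$ and $t\mid d-a$ is a tidy variant of the paper's more hands-on computation, which reads off $t\mid a(1-a)$ from the $(1,1)$ entry of $\varphi_0^2=\varphi_0$ and derives a contradiction from $a+d=1$.
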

\begin{proof}
Assume, without loss of generality, that $t\nmid b_1+b_2+b_3+b_4$. As in the proof of indecomposability of the module $135\mid 246$ the endomorphism of $\MM$ has to satisfy the conditions 
\begin{align*}
t&\mid (d-a)(b_1+b_2)-(b_1+b_2)^2t^{-1}c,\\
t&\mid (d-a)(b_1+b_2+b_3+b_4)-(b_1+b_2+b_3+b_4)^2t^{-1}c,\\  
t&\mid (d-a)(b_1+b_2+b_3+b_4+b_5+b_6)-(b_1+b_2+b_3+b_4+b_5+b_6)^2t^{-1}c.
\end{align*} 
Now we repeat the same arguments as for the module $135\mid 246$. From the first two conditions we get that $t\mid (b_3+b_4)t^{-1}c$. Since $t\nmid b_3+b_4$, it follows that $t\mid t^{-1}c$ and that $t\mid d-a$. If $a+d=1$, then, because $t\mid a$ or $t\mid 1-a$, it must be that $t\mid 1$, which is not true. Thus, it must be $a=d$ and $c=b=0$, giving us only trivial idempotents.
\end{proof}

Now that we know that whenever $t\nmid b_i+b_{i+1}+b_{i+2}+b_{i+3}$, for at least one $i$, the module $\MM$ 
with the given $(b_i)_i$ is indecomposable, we would like to know if the constructed modules are isomorphic. 
{Since $\sum b_i=0$, we have $t\nmid b_i+b_{i+1}+b_{i+2}+b_{i+3}$ for some $i$ if and only if 
$t\nmid b_{i+4}+b_{i+5} + b_{i_6} + b_{i_7}$. Therefore, these conditions come in pairs of ``complementary'' sums.} 
Thus we have to distinguish between the cases when two of the sums $b_i+b_{i+1}+b_{i+2}+b_{i+3}$ are divisible by $t$ 
and two are not, and when none of these sums of four consecutive $b_i$'s is divisible by $t$. {We will see that in the first 
case, we only get two indecomposable modules (up to isomorphism) while in the latter case, we get infinitely many.} 

Let us assume that $(c_i)_1^8$ is another $8$-tuple satisfying none of the divisibility conditions {\rm (div)} and such 
that $t\nmid c_i+c_{i+1}+c_{i+2}+c_{i+3}$, for some $i\in \{1,3,5,7\}$. 
Denote the module given by these $(c_i)_i$ by $\MM'$. 

In the following two propositions we consider the case when both $b_i$'s and $c_i$'s have two of the above mentioned sums divisible by $t$, and two sums not divisible by $t$.  
\begin{prop} \label{prop:two-sums-not-matching}
If $t\nmid b_1+b_2+b_3+b_4$, $t\mid b_3+b_4+b_5+b_6$, $t\nmid c_1+c_2+c_3+c_4$, and $t\mid c_3+c_4+c_5+c_6$, 
then $\MM$ and $\MM'$ are isomorphic.
\end{prop}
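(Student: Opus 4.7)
The plan is to mimic the construction from Theorems~\ref{tiso} and~\ref{thm:3-cond-iso}: I will build an explicit isomorphism $\varphi=(\varphi_i)_{i=0}^7\colon\MM\to\MM'$ by making a careful choice of $\varphi_0=\begin{pmatrix}\alpha & \beta \\ \gamma & \delta\end{pmatrix}$ and letting the commutation relations $\varphi_i\,x_i=x'_i\,\varphi_{i-1}$ propagate it around the cycle. As established in the proof of Theorem~\ref{thm:3-cond-iso}, this produces a well-defined module homomorphism precisely when $t\mid\gamma$ together with
\begin{align*}
t&\mid -\alpha P+p\,\delta-Pp\cdot t^{-1}\gamma,\\
t&\mid -\alpha Q+q\,\delta-Qq\cdot t^{-1}\gamma,\\
t&\mid -\alpha S_b+S_c\,\delta-S_bS_c\cdot t^{-1}\gamma,
\end{align*}
where I write $P=b_1+b_2$, $Q=b_1+b_2+b_3+b_4$, $S_b=b_1+\dots+b_6$, and $p,q,S_c$ for the analogous sums of $c$'s.

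I will set $\beta=0$, $\delta=1$, and solve the first two conditions as \emph{equalities}. Since $P$, $Q$, and $c_3+c_4=q-p$ are units in $\CC[|t|]$ by hypothesis, the resulting linear system has the unique solution
\[
t^{-1}\gamma=\frac{qP-pQ}{(c_3+c_4)PQ},\qquad \alpha=\frac{p}{P}-p\cdot t^{-1}\gamma=\frac{pq(b_3+b_4)}{(c_3+c_4)PQ},
\]
the last equality following from $q=p+(c_3+c_4)$. This explicit formula shows that $\alpha$ is a unit, so $\det\varphi_0=\alpha$ is a unit; combined with $\det x_i=\det x'_i=t$ and the commutation relations, this forces $\det\varphi_i=\alpha$ for every $i$, so each $\varphi_i$ is invertible.

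The one step that is not immediate is verifying the third divisibility condition, which a priori is an additional constraint on the two unknowns. This is where the hypothesis is decisive: $t\mid b_3+b_4+b_5+b_6=S_b-P$ gives $S_b\equiv P\pmod t$, and symmetrically $S_c\equiv p\pmod t$, so $S_bS_c\equiv Pp\pmod t$. Reducing the third expression modulo $t$ therefore yields $-\alpha P+p-Pp\cdot t^{-1}\gamma$, which vanishes by our choice of $\alpha$ and $\gamma$. Thus the third condition is automatic, and $\varphi$ is the desired isomorphism $\MM\to\MM'$.
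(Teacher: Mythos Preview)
Your proof is correct and follows essentially the same approach as the paper's: both solve the first two divisibility constraints as equalities in the two unknowns and then observe that the hypothesis $t\mid b_3+b_4+b_5+b_6$, $t\mid c_3+c_4+c_5+c_6$ makes the third constraint redundant. The only cosmetic difference is that the paper normalises $\alpha=1$ and solves for $\delta,\gamma$, whereas you normalise $\delta=1$ and solve for $\alpha,\gamma$; your explicit reduction of the third condition modulo $t$ to the first one is in fact more detailed than the paper's one-line claim that the conditions ``reduce to the first two''.
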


\begin{proof} Keeping the same notation as before when constructing isomorphisms, it must hold: 
\begin{align*}
t&\mid -\alpha (b_1+b_2)+(c_1+c_2)\delta -(b_1+b_2)(c_1+c_2)t^{-1}\gamma,\\
t&\mid -\alpha (b_1+b_2+b_3+b_4)+(c_1+c_2+c_3+c_4)\delta  -(b_1+b_2+b_3+b_4)(c_1+c_2+c_3+c_4)t^{-1}\gamma ,\\
t&\mid -\alpha (b_1+b_2+b_3+b_4+b_5+b_6)+(c_1+c_2+c_3+c_4+c_5+c_6)\delta -t^{-1}\gamma \sum_{i=1}^6b_i\sum_{i=1}^6c_i.
\end{align*}
Since $t\mid b_3+b_4+b_5+b_6$ and $t\mid c_3+c_4+c_5+c_6$, the above conditions reduce to the first two conditions. Now, we set  $-\alpha (b_1+b_2)+(c_1+c_2)\delta -(b_1+b_2)(c_1+c_2)t^{-1}\gamma=0$ and $-\alpha (b_1+b_2+b_3+b_4)+(c_1+c_2+c_3+c_4)\delta  -(b_1+b_2+b_3+b_4)(c_1+c_2+c_3+c_4)t^{-1}\gamma=0.$ Subsequently, $\alpha (b_1+b_2+b_3+b_4)(c_3+c_4)(c_1+c_2)^{-1}-\delta (c_1+c_2+c_3+c_4) (b_3+b_4)(b_1+b_2)^{-1} =0$. By setting $\alpha=1$, $\beta=0$, we get that $\delta=[(b_1+b_2)(b_1+b_2+b_3+b_4)(c_3+c_4)]          [(c_1+c_2)(c_1+c_2+c_3+c_4) (b_3+b_4)]^{-1}$, and $\gamma=-(c_1+c_2)^{-1}+\delta (b_1+b_2)^{-1}$, giving us an isomorphism between $\MM$ and $\MM'$.
\end{proof}

\begin{prop}\label{prop:2-long-div-cond}
If $t\nmid b_1+b_2+b_3+b_4$, $t\mid b_3+b_4+b_5+b_6$, $t\mid c_1+c_2+c_3+c_4$, and $t\nmid c_3+c_4+c_5+c_6$, then $\MM$ and $\MM'$ are not isomorphic.
\end{prop}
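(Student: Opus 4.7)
The plan is to mimic the strategy of Proposition~\ref{prop:two-sums-not-matching} and of Proposition~\ref{M12} and derive a contradiction from the second of the three standard divisibility conditions attached to a hypothetical isomorphism. Suppose, for contradiction, that $\varphi=(\varphi_i)_{i=0}^{7}$ is an isomorphism from $\MM$ to $\MM'$, and write
\[
\varphi_0=\begin{pmatrix}\alpha & \beta \\ \gamma & \delta\end{pmatrix}.
\]
Using the commutation relations $x_{i+1}\varphi_i=\varphi_{i+1}x_{i+1}$, exactly as in the proof of Theorem~\ref{thm:3-cond-iso}, one first obtains the entries of $\varphi_i$ in terms of $\alpha,\beta,\gamma,\delta$ together with the constraints
\[
t\mid \gamma, \qquad t\nmid\alpha, \qquad t\nmid\delta
\]
(the last two being forced by the invertibility of $\varphi_0$), and the three divisibility conditions
\begin{align*}
t&\mid -\alpha(b_1+b_2)+(c_1+c_2)\delta-(b_1+b_2)(c_1+c_2)t^{-1}\gamma,\\
t&\mid -\alpha(b_1+b_2+b_3+b_4)+(c_1+c_2+c_3+c_4)\delta-(b_1+b_2+b_3+b_4)(c_1+c_2+c_3+c_4)t^{-1}\gamma,\\
t&\mid -\alpha\textstyle\sum_{i=1}^6 b_i+\delta\sum_{i=1}^6 c_i-t^{-1}\gamma\bigl(\sum_{i=1}^6 b_i\bigr)\bigl(\sum_{i=1}^6 c_i\bigr).
\end{align*}

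The key observation is then that the hypothesis $t\mid c_1+c_2+c_3+c_4$ collapses the second condition: the term $(c_1+c_2+c_3+c_4)\delta$ is divisible by $t$, and so is $(b_1+b_2+b_3+b_4)(c_1+c_2+c_3+c_4)t^{-1}\gamma$ (note that $t^{-1}\gamma\in\CC[|t|]$ because $t\mid\gamma$). Consequently the second condition reduces to
\[
t\mid \alpha(b_1+b_2+b_3+b_4).
\]
Since $\alpha$ is a unit in $\CC[|t|]$, this forces $t\mid b_1+b_2+b_3+b_4$, contradicting the standing hypothesis $t\nmid b_1+b_2+b_3+b_4$. Hence no isomorphism $\varphi\colon\MM\to\MM'$ can exist.

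The main (and essentially only) obstacle is making sure the expressions for $\varphi_i$ and the resulting three divisibility conditions are correctly transcribed from the $n=6$ to the $n=8$ setting; this is the same bookkeeping that appeared in the proof of Theorem~\ref{thm:3-cond-iso}, where exactly these three constraints were worked out for the $(4,8)$ case. Once those formulas are in place, the argument above is immediate: the two ``long'' divisibility hypotheses on $b_1+b_2+b_3+b_4$ and $c_1+c_2+c_3+c_4$ are contradictory precisely because the middle condition is sensitive to them while the unit $\alpha$ cannot absorb the non-divisibility of $b_1+b_2+b_3+b_4$.
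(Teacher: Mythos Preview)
Your proof is correct and is essentially the same as the paper's own proof: both assume an isomorphism, write down the three divisibility conditions coming from Theorem~\ref{thm:3-cond-iso}, and observe that since $t\mid c_1+c_2+c_3+c_4$ the second condition collapses to $t\mid \alpha(b_1+b_2+b_3+b_4)$, contradicting $t\nmid \alpha$ and $t\nmid b_1+b_2+b_3+b_4$. Your write-up is slightly more explicit about why the two $c$-terms in the second condition drop out, but the argument is identical.
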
 
\begin{proof}
If there were an isomorphism between $\MM$ and $\MM'$, then its coefficients would satisfy
\begin{align*}
t&\mid -\alpha (b_1+b_2)+(c_1+c_2)\delta -(b_1+b_2)(c_1+c_2)t^{-1}\gamma,\\
t&\mid -\alpha (b_1+b_2+b_3+b_4)+(c_1+c_2+c_3+c_4)\delta  -(b_1+b_2+b_3+b_4)(c_1+c_2+c_3+c_4)t^{-1}\gamma ,\\
t&\mid -\alpha (b_1+b_2+b_3+b_4+b_5+b_6)+(c_1+c_2+c_3+c_4+c_5+c_6)\delta -t^{-1}\gamma \sum_{i=1}^6b_i\sum_{i=1}^6c_i.
\end{align*}
From the second condition we obtain $t\mid \alpha (b_1+b_2+b_3+b_4)$. But $t\nmid b_1+b_2+b_3+b_4$ and $t\nmid \alpha$, which is a contradiction. 
\end{proof}

\begin{rem} \label{rem:noniso}
The same arguments used in the proof of the previous proposition tell us that the two modules $\MM$ and 
$\MM'$ from Proposition~\ref{prop:2-long-div-cond} are two new non-isomorphic indecomposable modules which are not 
isomorphic to any of the modules $\MM_{1,2}$, $\MM_{3,4}$, $\MM_{5,6}$ and $\MM_{7,8}$ constructed before. 
For example, if the $b_i$'s correspond to the module $\MM_{7,8}$ and the $c_i$'s correspond to the module $\MM$, 
then from the third relation {in the proof of Proposition~\ref{prop:two-sums-not-matching}} 
we obtain that  $t\mid \delta (c_1+c_2+c_3+c_4+c_5+c_6)$, yielding $t\mid \delta (c_1+c_2)$, which is not true since 
$t\nmid \delta$ and $t\nmid (c_1+c_2)$.
\end{rem}

We are only left to examine if, in the case when none of the sums $b_i+b_{i+1}+b_{i+2}+b_{i+3}$ is divisible by $t$, 
for two different tuples we obtain isomorphic modules. In the following theorem we assume that $b_i$'s correspond to 
the module $\MM$ and $c_i$'s correspond to the module $\MM'$. Also, we assume that $t\nmid b_i+b_{i+1}$ 
$t\nmid c_i+c_{i+1}$ for odd $i$.

\begin{theorem}  \label{thm:infinite}
If $t\nmid b_i+b_{i+1}+b_{i+2}+b_{i+3}$ and $t\nmid c_i+c_{i+1}+c_{i+2}+c_{i+3}$, for $i=1,3,5,7,$ then the modules $\MM$ 
and $\MM'$ are isomorphic if and only if 
$$
t\mid (b_1+ b_2) (c_3+ c_4) (b_5+ b_6) (c_7+ c_8) -(c_1+ c_2) (b_3+ b_4) (c_5+ c_6) (b_7+ b_8).
$$
\end{theorem}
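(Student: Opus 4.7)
Suppose first that $\varphi=(\varphi_i)_{i=0}^{7}$ is an isomorphism $\mathbb M \to \mathbb M'$, and write $\varphi_0=\begin{pmatrix}\alpha&\beta\\\gamma&\delta\end{pmatrix}$. Propagating $\varphi_0$ around the cycle via the commutativity relations $\varphi_{i+1}x_{i+1}=x_{i+1}\varphi_i$, exactly as in the proofs of Theorems~\ref{tiso} and~\ref{thm:3-cond-iso}, yields explicit formulas for $\varphi_1,\dots,\varphi_7$ together with the requirements $t\mid\gamma$ and three divisibility conditions (coming from the even vertices $2,4,6$):
\begin{align*}
t&\mid -\alpha(b_1+b_2)+(c_1+c_2)\delta-(b_1+b_2)(c_1+c_2)t^{-1}\gamma,\\
t&\mid -\alpha(b_1+\cdots+b_4)+(c_1+\cdots+c_4)\delta-(b_1+\cdots+b_4)(c_1+\cdots+c_4)t^{-1}\gamma,\\
t&\mid -\alpha(b_1+\cdots+b_6)+(c_1+\cdots+c_6)\delta-(b_1+\cdots+b_6)(c_1+\cdots+c_6)t^{-1}\gamma.
\end{align*}
The closing relation at vertex $8$ is automatic since $\sum b_i=\sum c_i=0$. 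Invertibility of $\varphi_0$ forces $\alpha,\delta$ to be units in $\CC[|t|]$.

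To keep the bookkeeping light, write $B_k=b_{2k-1}+b_{2k}$ and $C_k=c_{2k-1}+c_{2k}$, so that $B_1+B_2+B_3+B_4=0=C_1+C_2+C_3+C_4$. Set $\gamma'=t^{-1}\gamma\in\CC[|t|]$ and let bars denote reduction mod $t$. The three divisibility conditions above become the homogeneous linear system $M\,v=0$ over $\CC$ in the unknown $v=(\bar\alpha,\bar\delta,\bar\gamma')^{\mathrm t}$, where
\[
M=\begin{pmatrix}
-\bar B_1 & \bar C_1 & -\bar B_1\bar C_1\\
-(\bar B_1+\bar B_2) & \bar C_1+\bar C_2 & -(\bar B_1+\bar B_2)(\bar C_1+\bar C_2)\\
\bar B_4 & -\bar C_4 & -\bar B_4\bar C_4
\end{pmatrix}.
\]
(In the last row I used $B_1+B_2+B_3=-B_4$ and $C_1+C_2+C_3=-C_4$.) A direct expansion, together with the relations $\sum \bar B_k=\sum \bar C_k=0$ to collapse the mixed terms, reduces to
\[
\det M=\bar B_2\bar B_4\bar C_1\bar C_3-\bar B_1\bar B_3\bar C_2\bar C_4.
\]
This step is the most delicate: all cross terms involving products like $\bar B_1\bar C_1$, $\bar B_1\bar B_4\bar C_1\bar C_2$, etc.\ must cancel, and they do precisely because $\bar B_1+\bar B_4=-(\bar B_2+\bar B_3)$ and the analogous identity for the $\bar C_k$. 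Consequently, existence of a solution $v$ with $\bar\alpha\neq 0$ and $\bar\delta\neq 0$ (which is what $\varphi_0$ being invertible requires) forces $\det M=0$, which is exactly the stated divisibility condition since $\bar B_1\bar B_3\bar C_2\bar C_4-\bar B_2\bar B_4\bar C_1\bar C_3$ is the reduction mod $t$ of $(b_1+b_2)(c_3+c_4)(b_5+b_6)(c_7+c_8)-(c_1+c_2)(b_3+b_4)(c_5+c_6)(b_7+b_8)$.

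Conversely, assume $\det M=0$. I first check that every nonzero $v\in\ker M$ has $\bar\alpha\neq 0$ and $\bar\delta\neq 0$. Indeed, if $\bar\alpha=0$, then the first two rows give $\bar C_1\bar\delta=\bar B_1\bar C_1\bar\gamma'$ and $(\bar C_1+\bar C_2)\bar\delta=(\bar B_1+\bar B_2)(\bar C_1+\bar C_2)\bar\gamma'$; since $\bar C_1,\bar C_1+\bar C_2$ are nonzero by hypothesis, subtracting yields $\bar B_2\bar\gamma'=0$ and hence $\bar\gamma'=\bar\delta=0$. A symmetric argument rules out $\bar\delta=0$. Therefore, picking any nonzero element of $\ker M$ and lifting it to $(\alpha,\delta,\gamma')\in\CC[|t|]^3$ with $\alpha,\delta$ units and $\gamma=t\gamma'$, setting $\beta=0$, defines $\varphi_0$; the remaining $\varphi_i$ are then forced by the commutativity relations and each has determinant $\alpha\delta\in\CC[|t|]^\times$, so $\varphi$ is an isomorphism $\mathbb M\to\mathbb M'$. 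This completes both directions.
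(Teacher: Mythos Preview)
Your proof is correct and rests on the same setup as the paper's: both start from the three divisibility conditions on $(\alpha,\delta,\gamma)$ obtained by propagating $\varphi_0$ around the cycle, and both conclude by showing these conditions are solvable with $\alpha,\delta$ units precisely when the stated product identity holds modulo $t$. The difference is in packaging: the paper eliminates $\gamma$ and then $\delta$ by successive divisions (using that the partial sums are units) to reach $t\mid \alpha\cdot[\text{the stated expression}]$, whereas you reduce everything modulo $t$ at once and recognise the three conditions as a homogeneous $3\times 3$ linear system over $\CC$, computing its determinant directly. Your approach is a bit more systematic and makes the converse more transparent: the paper simply says ``set $\alpha=1$ and compute $\delta$ and $\gamma$ from the above relations'', while you verify explicitly that every nonzero kernel vector has both $\bar\alpha$ and $\bar\delta$ nonzero, so that any constant lift yields an invertible $\varphi_0$ satisfying all three divisibility conditions. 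Both arguments are short and the algebra is the same underneath.
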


\begin{proof}
As before, if there were an isomorphism between $\MM$ and $\MM'$, its coefficients would have to satisfy the 
following conditions: 
\begin{align*}
t&\mid -\alpha (b_1+b_2)+(c_1+c_2)\delta -(b_1+b_2)(c_1+c_2)t^{-1}\gamma,\\
t&\mid -\alpha (b_1+b_2+b_3+b_4)+(c_1+c_2+c_3+c_4)\delta  -(b_1+b_2+b_3+b_4)(c_1+c_2+c_3+c_4)t^{-1}\gamma ,\\
t&\mid \alpha (b_7+b_8)-(c_7+c_8)\delta -(b_7+b_8)(c_7+c_8)t^{-1}\gamma.
\end{align*}
From these we get that 
\begin{align*}
t&\mid \alpha (c_3+ c_4)[(c_1+ c_2)(c_1+ c_2+c_3+ c_4)]^{-1}-\delta (b_3+ b_4)[(b_1+ b_2)(b_1+ b_2+b_3+ b_4)]^{-1},\\
t&\mid \alpha (c_5+ c_6)[(c_7+ c_8)(c_1+ c_2+c_3+ c_4)]^{-1}-\delta  (b_5+ b_6)[(b_7+ b_8)(b_1+ b_2+b_3+ b_4)]^{-1}.
\end{align*}
Finally, from the last two relations we get 
$$t\mid \alpha [(b_1+ b_2) (c_3+ c_4) (b_5+ b_6) (c_7+ c_8) -(c_1+ c_2) (b_3+ b_4) (c_5+ c_6) (b_7+ b_8)].$$
If $t\nmid (b_1+ b_2) (c_3+ c_4) (b_5+ b_6) (c_7+ c_8) -(c_1+ c_2) (b_3+ b_4) (c_5+ c_6) (b_7+ b_8)$, then there is no isomorphism between $\MM'$ and $\MM$. If $t\mid (b_1+ b_2) (c_3+ c_4) (b_5+ b_6) (c_7+ c_8) -(c_1+ c_2) (b_3+ b_4) (c_5+ c_6) (b_7+ b_8), $ then we simply set $\alpha=1$, and compute $\delta$ and $\gamma$ from the above relations (as before, we set $\beta=0$).
\end{proof}

\begin{rem}
It is easily shown that none of the indecomposable modules from Theorem~\ref{thm:infinite} is isomorphic to any of 
the indecomposable modules from the previous cases. To prove this, we use the same arguments as in 
Remark~\ref{rem:noniso}.
\end{rem}

We will now parametrize the non-isomorphic indecomposable modules from Theorem~\ref{thm:infinite}.

Let $\beta \in \mathbb C\setminus {\{0,1,-1\}}$.  Keeping the notation from the theorem, choose the parameters 
$b_i$ in the following way: 
$b_1+b_2=-(b_5+b_6)=1$, $b_3+b_4=-(b_7+b_8)=\beta$.  Then, 
$t\nmid b_i+b_{i+1}$ and $t\nmid b_i+b_{i+1}+b_{i+2}+b_{i+3}$, for odd $i$. 
Denote the indecomposable module that corresponds to these coefficients by $\MM_{\beta}$. 

\begin{corollary} \label{cor:inf} There are infinitely many non-isomorphic rank $2$ indecomposable  modules in ${\rm CM}(B_{4,8})$ with profile $1357\mid 2468$.
\end{corollary}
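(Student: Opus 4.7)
The plan is to use the family $\MM_\beta$ defined just before the corollary, together with Proposition~\ref{prop:more-indec} and Theorem~\ref{thm:infinite}, to parameterize infinitely many non-isomorphic indecomposables with profile $1357\mid 2468$.

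First I would check that for every $\beta\in\CC\setminus\{0,1,-1\}$ the tuple $(b_i)_{i=1}^8$ chosen with $b_1+b_2=-(b_5+b_6)=1$ and $b_3+b_4=-(b_7+b_8)=\beta$ does give an element of ${\rm CM}(B_{4,8})$: the total sum $\sum b_i=1+\beta-1-\beta=0$, so the relation $x^k=y^{n-k}$ is satisfied, and the module $\MM_\beta$ is defined. Next, since $\beta\neq 0$, the four sums $b_i+b_{i+1}$ for $i=1,3,5,7$ are all nonzero constants in $\CC\subset\CC[[t]]$, hence not divisible by $t$, so none of the four divisibility conditions (div) holds. Moreover, $b_1+b_2+b_3+b_4=1+\beta$ and $b_3+b_4+b_5+b_6=\beta-1$ are both nonzero since $\beta\neq\pm 1$, so in particular $t\nmid b_1+b_2+b_3+b_4$. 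Proposition~\ref{prop:more-indec} then applies and yields that $\MM_\beta$ is indecomposable.

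Next I would invoke the isomorphism criterion of Theorem~\ref{thm:infinite} to compare two such modules $\MM_\beta$ and $\MM_{\beta'}$ (with $c_i$ the tuple attached to $\beta'$). Plugging in the specific sums gives
\[
(b_1+b_2)(c_3+c_4)(b_5+b_6)(c_7+c_8)-(c_1+c_2)(b_3+b_4)(c_5+c_6)(b_7+b_8)=(\beta')^2-\beta^2,
\]
which is a constant in $\CC$. Thus $t$ divides this expression if and only if $(\beta')^2=\beta^2$, i.e.\ if and only if $\beta'=\pm\beta$. Hence the assignment $\beta\mapsto \MM_\beta$ descends to an injection from $(\CC\setminus\{0,1,-1\})/(\beta\sim-\beta)$ into the set of isomorphism classes of indecomposable rank $2$ modules with profile $1357\mid 2468$, and the corollary follows since the quotient set is infinite.

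The only non-routine step is the bookkeeping in the second paragraph — verifying that the explicit product in the criterion of Theorem~\ref{thm:infinite} collapses to the clean expression $(\beta')^2-\beta^2$. Once that calculation is done, indecomposability and non-isomorphism of the family are immediate consequences of the two previously established results.
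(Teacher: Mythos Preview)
Your proof is correct and follows essentially the same approach as the paper: you use the family $\MM_\beta$, verify that the hypotheses of Proposition~\ref{prop:more-indec} and Theorem~\ref{thm:infinite} are met, and compute the criterion of Theorem~\ref{thm:infinite} to be $(\beta')^2-\beta^2$, exactly as the paper does (with $\alpha$ in place of $\beta'$). You are slightly more explicit than the paper in spelling out why $\sum b_i=0$ and why Proposition~\ref{prop:more-indec} applies, but this is just a matter of detail.
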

\begin{proof} Let $\alpha\in \mathbb C\setminus {\{0,1,-1\}}$, with $\beta\neq \pm\alpha$, and $\MM_{\alpha}$ be the corresponding indecomposable module. Then $\MM_{\alpha}$ and $\MM_{\beta}$ are not isomorphic. Indeed, assuming that the coefficients $c_i$ correspond to $\MM_{\alpha}$, we have that 
 $$t\nmid (b_1+ b_2) (c_3+ c_4) (b_5+ b_6) (c_7+ c_8) -(c_1+ c_2) (b_3+ b_4) (c_5+ c_6) (b_7+ b_8)=\alpha^{2}-\beta^{2},$$ since $\alpha\neq \pm \beta$, so by the previous theorem the corresponding modules are not isomorphic.
\end{proof}

From the proof of the previous corollary it follows that the two modules $\MM_{\alpha}$ and $\MM_{\beta}$ are isomorphic 
if and only if $\alpha=\pm \beta$. Thus, the non-isomorphic indecomposable modules of this form are parameterized 
by $\mathbb C\setminus {\{0,1,-1\}}$, where two points in this set are identified if they sum up to $0$.
In the next proposition we show that  every indecomposable module as in Theorem~\ref{thm:infinite} 
is isomorphic to $\MM_{\beta}$ for some $\beta$.

\begin{prop}  Let $\MM$ be a rank $2$ indecomposable module with the corresponding coefficients $c_i$ satisfying 
$t\nmid c_i+c_{i+1}$, $t\nmid c_i+c_{i+1}+c_{i+2}+c_{i+3}$, for odd $i$. 
Then there exists $\beta \in \mathbb C\setminus {\{0,1,-1\}}$ such that $\MM$ is isomorphic to $\MM_{\beta}$. 
\end{prop}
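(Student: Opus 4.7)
The plan is to invoke the isomorphism criterion of Theorem~\ref{thm:infinite} and then solve for $\beta$ in closed form. For $\MM_\beta$ we have the prescribed values $b_1+b_2 = -(b_5+b_6) = 1$ and $b_3+b_4 = -(b_7+b_8) = \beta$. Substituting these into the divisibility condition from Theorem~\ref{thm:infinite}, I would find that $\MM \cong \MM_\beta$ if and only if
$$
t \;\Big|\; \beta^{2}(c_1+c_2)(c_5+c_6) \;-\; (c_3+c_4)(c_7+c_8).
$$
Writing $\bar f$ for the constant term of $f\in\CC[[t]]$, and setting $a=\overline{c_1+c_2}$, $b=\overline{c_3+c_4}$, $c=\overline{c_5+c_6}$, $d=\overline{c_7+c_8}$, the condition becomes $\beta^{2}ac = bd$, so the natural candidate is any square root of $bd/(ac)$.

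Next I would check that such a $\beta$ lies in $\CC\setminus\{0,1,-1\}$. The hypothesis $t\nmid c_i+c_{i+1}$ for $i$ odd gives $a,b,c,d\neq 0$, so $bd/(ac)$ is a well-defined nonzero complex number; hence $\beta\neq 0$. To rule out $\beta^2=1$, I would use that $\sum_{i=1}^8 c_i=0$ implies $a+b+c+d=0$, together with the additional hypothesis $t\nmid c_i+c_{i+1}+c_{i+2}+c_{i+3}$ for $i$ odd, which translates to $a+b\neq 0$ and $a+d\neq 0$. Substituting $c=-(a+b+d)$ yields the identity
$$
ac - bd \;=\; -a(a+b+d) - bd \;=\; -(a+b)(a+d),
$$
which is therefore nonzero. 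Consequently $bd/(ac)\neq 1$, so the square root $\beta$ can be chosen in $\CC\setminus\{0,1,-1\}$.

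Finally, applying Theorem~\ref{thm:infinite} with this $\beta$ produces the required isomorphism $\MM\cong \MM_\beta$. Since the module $\MM$ satisfies all the hypotheses of that theorem (none of the four divisibility conditions \textrm{(div)} holds for $(c_i)$ because $t\nmid c_i+c_{i+1}$, and both $(b_i)$ and $(c_i)$ satisfy the four-term non-divisibility), the theorem applies directly. No step should be a real obstacle; the only delicate point is verifying $ac\neq bd$, which rests precisely on the algebraic identity $ac-bd=-(a+b)(a+d)$ and the two ``four-term'' non-divisibility hypotheses, so it is reassuring that these hypotheses enter in an essential way.
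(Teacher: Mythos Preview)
Your proof is correct and follows essentially the same approach as the paper: both reduce to choosing $\beta$ as a square root of $\overline{(c_3+c_4)(c_7+c_8)}/\overline{(c_1+c_2)(c_5+c_6)}$ and then exclude $\beta=0,\pm 1$ via the same algebraic identity (the paper factors it as $(\gamma_3+\gamma_5)(\gamma_5+\gamma_7)$ after multiplying the relation $\sum\gamma_i=0$ by $\gamma_5$, which is equivalent to your $ac-bd=-(a+b)(a+d)$ since $b+c=-(a+d)$ and $c+d=-(a+b)$). Your direct identity is arguably tidier than the paper's contradiction argument, but the content is identical.
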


\begin{proof} Let $c_i+c_{i+1}=C_i$, for $i=1,3,5,7.$ Since the coefficients of $\MM_{\beta}$ satisfy $b_1+b_2=-(b_5+b_6)=1$, 
$b_3+b_4=-(b_7+b_8)=\beta$, it follows from Theorem~\ref{thm:infinite} that we need to find  $\beta$ 
satisfying $t\mid \beta^2C_1C_5-C_3C_7$. If $\gamma_i$ is the  constant term of $C_i$, then we choose 
$\beta$ to be a square root of $(\gamma_3\gamma_7)(\gamma_1\gamma_5)^{-1}$. 
{Note that by the divisibility conditions {\rm (div)}, 
$\gamma_1\gamma_5\ne 0$ and $\gamma_3\gamma_7\ne 0$. In particular}, $\beta\neq 0$. If $\beta=\pm1$, then 
$\gamma_1\gamma_5=\gamma_3\gamma_7$. From $C_1+C_3+C_5+C_7=0$, we get after multiplying by $C_5$ that 
$(\gamma_3+\gamma_5)(\gamma_5+\gamma_7)=0$ which is not possible. Hence, $\beta\neq \pm1.$
\end{proof}

\section{The general case: $r$-interlacing, $r\geq 4$}\label{sec:general}

In this section we generalize the results from the previous two sections. 
We deal with the case of $r$-interlacing, where $r\geq 4$ and at least four boxes (i.e.\ $r$-interlacing rims where 
$r\ge r_1>3$). 
We prove that if $I$ and $J$ are $r$-interlacing with poset $1^{r_1}\mid 2$, where $r\geq 4$ and $r\geq r_1> 3$, 
then there exist non-isomorphic indecomposable modules with the given filtration $L_I\mid L_J$. It follows that in this case, 
as in Section~\ref{sec:4-interlacing}, 
the profile of a module does not uniquely determine the module.  
Let $I$ and $J$ be $r$-interlacing and form $r_1$ boxes. In general, $I\cap J$ and $I^c\cap J^c$ are non-empty. We have to modify our definition of branching points $i_m$ and associated points $j_m$ for the general setting.

\begin{defn}\label{def:branching} 
Let $I$ and $J$ be two $k$-subsets such that their lattice diagram forms $r_1$ boxes. 
The {\em branching points of the lattice diagram 
$I \mid J$} are defined to be the points where the boxes end, i.e.\ $i\in I\setminus J$ is a branching point if  $i+1\notin I$  and  the two rims meet at $i$. We denote them by $\{i_1,i_2,\dots, i_{r_1}\}$. In addition, we define the points  
$\{j_1,j_2,\dots,j_{r_1}\}$ {at the beginning of the boxes} 
as the set of $j\in J\setminus I$ such that $j-1\notin J$ and such that $j_m$ is minimal in 
$\{i_m+1,\dots, i_{m+1}\}$ (cyclically) with this property. 
The {\em size} of the box ending at $i_m$ is defined to be the number 
of elements of $I\setminus J$ for that box. 
\end{defn}

Let $r\geq 4$. If $k$ is arbitrary and $I$, $J$ are such that $I$ and $J$ are $r$-interlacing and  $I\mid J$ has poset 
of the form $1^{r_1}\mid 2$, where $r\geq r_1>3$ (see Figure~\ref{fig9}), then we are able to construct more than one  non-isomorphic 
indecomposable rank 2 module which has $L_J$ as a submodule and $L_I$ as the quotient as follows. 
Denote this module by $\MM(I,J)$. We mimic the same procedure as for the module $1357\mid 2468$.   

\begin{figure}[H] 
\begin{center}
\includegraphics[width = 8.3cm]{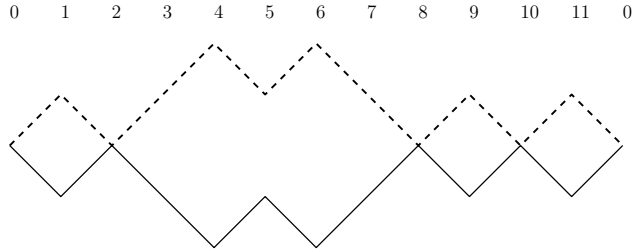}
\caption{The profile of  a  module  with $5$-interlacing layers and  with poset $1^4\mid 2$, for $(k,n)=(6,12)$.}\label{fig9}
\end{center}
\end{figure}

{The two rims form $r_1$ boxes. Denote the $r_1$ branching points from $I$ where the boxes of the two rims end by 
$i_1,\dots, i_{r_1}$ 
and their counterparts in $J$ by $j_1,\dots, j_{r_1}$, as in Definition~\ref{def:branching}.} 

For these branching points, we set 
$x_{i_l}=\begin{pmatrix} t& b_{i_l} \\ 0 & 1 \end{pmatrix}$ and $x_{j_l}=\begin{pmatrix} 1& b_{j_l} \\ 0 & t \end{pmatrix}$ for 
$l=1,\dots, r_1$. For all other vertices $i$ we define $x_i$ (resp.\ $y_i$) to be diagonal matrices as follows:  the diagonal 
of $x_i$ 
(resp.\ $y_i$) is $(1,t)$ (resp.\ $(t,1)$) if $i\in J\setminus I$, it is $(t,1)$ (resp.\ $(1,t)$)  if $i\in I\setminus J$, $(t,t)$ 
(resp.\ $(1,1)$) if $i\in I^c\cap J^c$, and $(1,1)$ (resp.\ $(t,t)$) if $i\in I\cap J$. 
We also assume that 
$\sum_1^{r_1}(b_{i_l}+b_{j_l})=0$ so that we have a module structure.

Now we assume that the following divisibility conditions hold for the $b_i$'s at the first three branching points, 
as in Theorem~\ref{M1234} (recalling from its proof that $t\nmid b_{1}+b_{2}+b_{3}+b_{4}$) 
for the module $1357\mid 2468$:  $t\nmid b_{i_1}+b_{j_1}$, $t\nmid b_{i_2}+b_{j_2}$, $t\nmid b_{i_3}+b_{j_3}$, 
with 
$t\nmid b_{i_1}+b_{j_1}+b_{i_2}+b_{j_2}$, 
and $t\mid b_{i_l}+b_{j_l}$ for $l\geq 4$. As in the previous sections, it is now easy to prove that this module is 
indecomposable by invoking the same divisibility arguments as before. 
Obviously, we could start at any branching point in order to obtain additional {$r_1-1$} indecomposable modules. As in  Proposition~\ref{M12}, one can easily prove that no two of these $r_1$ indecomposable rank 2 modules are isomorphic.  
  
Therefore, we have the following proposition.

\begin{prop} 
If $I$ and $J$ are $r$-interlacing and $I\mid J$ has the poset $1^{r_1}\mid 2$, where $r\geq r_1> 3$, then there  are
more than one indecomposable rank $2$ modules with the profile $I\mid J$.
\end{prop}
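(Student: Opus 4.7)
The plan is to directly mimic the argument used for the tame case $(4,8)$ with profile $1357\mid 2468$ (Theorems~\ref{M1234} and \ref{thm:3-cond-iso}, Proposition~\ref{M12}), generalised from $r_1=4$ branching points to arbitrary $r_1>3$. Since in the construction $\MM(I,J)$ the matrices $x_i$ and $y_i$ are diagonal except at the $2r_1$ branching points $i_1,j_1,\dots,i_{r_1},j_{r_1}$, the only free parameters are the $b_{i_l}$ and $b_{j_l}$, subject to $\sum_{l=1}^{r_1}(b_{i_l}+b_{j_l})=0$. For each $s\in\{1,\dots,r_1\}$ I will choose the parameters so that (indices cyclic mod $r_1$)
\[
t\nmid b_{i_s}+b_{j_s},\quad t\nmid b_{i_{s+1}}+b_{j_{s+1}},\quad t\nmid b_{i_{s+2}}+b_{j_{s+2}},\quad t\nmid (b_{i_s}+b_{j_s})+(b_{i_{s+1}}+b_{j_{s+1}}),
\]
and $t\mid b_{i_l}+b_{j_l}$ for all other $l$. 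Since $r_1\ge 4$, such a choice is consistent with $\sum_l(b_{i_l}+b_{j_l})=0$: simply pick the three exceptional sums with values in $\CC^{\times}$ whose sum avoids $t\ZZ[|t|]$, and absorb the remaining slack into the divisible sums. Denote the resulting module by $\MM^{(s)}(I,J)$.

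Next I would prove that each $\MM^{(s)}(I,J)$ is indecomposable by generalising Proposition~\ref{lm:n6-hom} and the argument of Theorem~\ref{M1234}. Fix a reference vertex and let $\varphi_0=\begin{pmatrix}a&b\\c&d\end{pmatrix}$. At every non-branching vertex $x_i$ is scalar, so $\varphi_i=\varphi_{i-1}$; at each branching point the commutation relation $x_i\varphi_{i-1}=\varphi_i x_i$ imposes $t\mid c$ together with a divisibility condition of the form
\[
t\mid (d-a)\Sigma_{l}-\Sigma_{l}^{2}t^{-1}c,
\]
where $\Sigma_l$ is the cumulative sum $b_{i_1}+b_{j_1}+\dots$ up to that branching point. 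For the three stages where $t\nmid b_{i_{s+u}}+b_{j_{s+u}}$ and the associated partial sums are invertible, the three resulting relations combine exactly as in the proof of Theorems~\ref{t6} and \ref{M1234} to force $t\mid c$, $t\mid d-a$, and finally $b=c=0$ with $a=d\in\{0,1\}$, showing that only trivial idempotents exist in $\mathrm{End}(\MM^{(s)}(I,J))$.

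Finally I would show that the modules $\MM^{(s)}(I,J)$ and $\MM^{(s')}(I,J)$ are non-isomorphic for $s\ne s'$, by the same method as Proposition~\ref{M12}. Suppose $\varphi$ is an isomorphism with $\varphi_0=\begin{pmatrix}\alpha&\beta\\\gamma&\delta\end{pmatrix}$, necessarily with $\alpha,\delta$ invertible and $t\mid\gamma$. Propagating $\varphi$ through the branching points produces, for every $l$, a constraint
\[
t\mid -\alpha\Sigma_l^{b}+\delta\Sigma_l^{c}-\Sigma_l^{b}\Sigma_l^{c}t^{-1}\gamma,
\]
where $\Sigma_l^b,\Sigma_l^c$ are the partial sums for the two tuples. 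Choose an index $l$ where $t\mid b_{i_l}+b_{j_l}$ but $t\nmid c_{i_l}+c_{j_l}$ (which exists because the ``triples of non-divisible sums'' for $s$ and $s'$ are different cyclic translates, so some individual sum has differing divisibility behaviour). Evaluating the constraint at that stage gives $t\mid \delta\,\Sigma_l^{c}$ with both factors invertible mod $t$, a contradiction. Hence no isomorphism exists and we obtain $r_1\ge 4$ pairwise non-isomorphic indecomposable rank 2 modules with profile $I\mid J$.

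The main obstacle is the bookkeeping for the endomorphism analysis in the general setting: the boxes may have different sizes and may contain junctions, but, thanks to Remark~\ref{parremoval} and the diagonal form of $x_i$ at non-branching vertices, every such interior index only propagates $\varphi$ trivially from one branching point to the next, so no essential difficulty arises beyond the tame-case computation. The delicate point is verifying that the partial sums $\Sigma_l^b,\Sigma_l^c$ behave under the indecomposability/isomorphism argument precisely as the sums $b_1+b_2,\,b_1+b_2+b_3+b_4,\,\dots$ did in Section~\ref{sec:4-interlacing}, which is why the hypothesis $t\nmid (b_{i_s}+b_{j_s})+(b_{i_{s+1}}+b_{j_{s+1}})$ must be built into the construction from the start.
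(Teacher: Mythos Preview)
Your approach is essentially identical to the paper's: construct $r_1$ modules $\MM^{(s)}$ by concentrating three non-divisible sums $b_{i_l}+b_{j_l}$ at consecutive branching points (with the additional condition $t\nmid (b_{i_s}+b_{j_s})+(b_{i_{s+1}}+b_{j_{s+1}})$), deduce indecomposability from the argument of Theorem~\ref{M1234}, and separate them by the method of Proposition~\ref{M12}. The paper itself gives only this sketch.

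There is one imprecision in your non-isomorphism step that you should tighten. The propagated constraints involve the \emph{partial} sums $\Sigma_l^b,\Sigma_l^c$, not the individual sums $b_{i_l}+b_{j_l}$. In your construction the three non-divisible individual sums add to $0\pmod t$, so the set of stages with $\Sigma_l^b\not\equiv 0\pmod t$ is exactly $\{s,s+1\}$, not $\{s,s+1,s+2\}$. Your criterion ``choose $l$ with $t\mid b_{i_l}+b_{j_l}$ but $t\nmid c_{i_l}+c_{j_l}$'' can pick $l=s'+2$, where $\Sigma_l^c\equiv 0$ and no contradiction follows; this actually occurs when $s'=s\pm 1$. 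The fix is to look instead for $l$ with $\Sigma_l^b\equiv 0$ and $\Sigma_l^c\not\equiv 0$ (or vice versa), i.e.\ $l$ in the symmetric difference of $\{s,s+1\}$ and $\{s',s'+1\}$, which is non-empty for $s\ne s'$; this is precisely the mechanism in Proposition~\ref{M12}.
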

 
Furthermore, it is easy to adapt the proof of Theorem \ref{thm:infinite} and Corollary \ref{cor:inf} to the general case when 
$r\geq 4$ in order to obtain the following theorem (we omit the proof).
 
\begin{theorem} \label{thm:infmod}
Let $I$ and $J$ be $r$-interlacing with poset $1^{r_1}\mid 2$, where $r\geq r_1> 3$. There are infinitely many non-isomorphic 
rank $2$ indecomposable modules in  ${\rm CM}(B_{k,n})$ with profile $I \mid J$.
\end{theorem}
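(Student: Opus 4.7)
The plan is to mirror the construction and analysis of Section~\ref{sec:4-interlacing} for the profile $1357\mid 2468$, but now with $r_1\ge 4$ branching points $i_1,\dots,i_{r_1}$ (in $I$) and $j_1,\dots,j_{r_1}$ (in $J$) as in Definition~\ref{def:branching}. Following the strategy of the paragraph immediately preceding the theorem, I would define $\MM(I,J)$ by putting $x_{i_l}=\bigl(\begin{smallmatrix}t & b_{i_l}\\ 0 & 1\end{smallmatrix}\bigr)$ at each $I$-branching point and $x_{j_l}=\bigl(\begin{smallmatrix}1 & b_{j_l}\\ 0 & t\end{smallmatrix}\bigr)$ at each $J$-branching point, while setting the $x_i$ at all other vertices to be scalar ($\id$, $t\id$) as prescribed by $i\in I\cap J$, $i\in I^c\cap J^c$, $i\in I\setminus J$ (non-branching), or $i\in J\setminus I$ (non-branching). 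As in Remark~\ref{parremoval} and the computation of Proposition~\ref{prop:constr-works}, absorbing the scalar blocks into $t$-powers reduces the global relation $x^n=t^{n-k}\id$ to the single condition $\sum_{l=1}^{r_1} B_l=0$, where $B_l:=b_{i_l}+b_{j_l}$; I would make this assumption throughout.

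Next I would carry out an endomorphism computation analogous to Proposition~\ref{lm:n6-hom} and the analysis of Case~3 in Section~\ref{sec:4-interlacing}: starting with $\varphi$ at the vertex just after $i_{r_1}$ written as $\bigl(\begin{smallmatrix}a & b\\ c & d\end{smallmatrix}\bigr)$, the commutation relations force $t\mid c$ and yield, for each $l=1,\dots,r_1-1$, a divisibility condition
\[
t\mid (d-a)\bigl(B_1+\dots+B_l\bigr)-\bigl(B_1+\dots+B_l\bigr)^2\,t^{-1}c.
\]
To produce infinitely many non-isomorphic indecomposables I would then introduce a one-parameter family $\MM_\beta(I,J)$ for $\beta\in\mathbb C\setminus\{0,1,-1\}$ by choosing the $B_l$ exactly as in the $1357\mid 2468$ case at the \emph{first four} branching points, namely $B_1=1$, $B_2=\beta$, $B_3=-1$, $B_4=-\beta$, and requiring at the remaining branching points that $t\nmid B_l$, $t\nmid B_l+B_{l+1}$, together with $t\mid B_l+B_{l+1}+B_{l+2}+B_{l+3}$ cyclically for $l\ge 5$ (the latter ensuring the extra conditions beyond the first four are automatically satisfied in the endomorphism analysis). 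Indecomposability of each $\MM_\beta(I,J)$ then follows by exactly the argument of Proposition~\ref{prop:more-indec}: the conditions $t\nmid B_l+B_{l+1}$ combined with $t\nmid B_1+B_2+B_3+B_4$ force $t\mid t^{-1}c$ and $t\mid d-a$, and no non-trivial idempotent exists.

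For the non-isomorphism statement, I would repeat the computation of Theorem~\ref{thm:infinite}. Any isomorphism $\varphi$ between $\MM_\beta$ and $\MM_{\beta'}$ (with parameters $(b_{i_l},b_{j_l})$ and $(c_{i_l},c_{j_l})$ respectively) satisfies, at each branching step, a congruence
\[
t\mid -\alpha(B_1^{(b)}+\dots+B_l^{(b)})+(B_1^{(c)}+\dots+B_l^{(c)})\delta - (B_1^{(b)}+\dots+B_l^{(b)})(B_1^{(c)}+\dots+B_l^{(c)})t^{-1}\gamma.
\]
Combining the first four of these (and observing that beyond $l=4$ the congruences are automatically satisfied because the additional partial sums $B_1+\dots+B_l$ for $l\ge 4$ differ from $B_1+\dots+B_4$ by multiples of $t$) I would recover the obstruction
\[
t\mid \alpha\bigl[B_1^{(b)}B_3^{(b)}B_2^{(c)}B_4^{(c)}-B_1^{(c)}B_3^{(c)}B_2^{(b)}B_4^{(b)}\bigr]=\alpha(\beta'^2-\beta^2),
\]
so $\MM_\beta\cong\MM_{\beta'}$ iff $\beta=\pm\beta'$. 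Since $\mathbb C\setminus\{0,1,-1\}$ modulo $\beta\sim-\beta$ is infinite, this yields infinitely many non-isomorphic indecomposables, as required.

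The main obstacle will be the bookkeeping in the general endomorphism computation, in particular making sure that the extra branching points (beyond the first four) neither introduce new obstructions to indecomposability nor new obstructions to isomorphism, so that the $1357\mid 2468$ dichotomy really does govern the family $\{\MM_\beta\}$. The choice $t\mid B_l+B_{l+1}+B_{l+2}+B_{l+3}$ for $l\ge 5$ is engineered precisely so that, after substituting into the partial-sum congruences, all conditions beyond the fourth branching point collapse to consequences of the first four — this is the step I expect to require the most care in writing up, but it is exactly analogous to the collapse already exploited in the proof of Theorem~\ref{M1234} (where the condition $t\mid b_7+b_8$ was used to discard the last partial-sum congruence).
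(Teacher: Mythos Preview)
Your overall strategy---concentrate the free parameters at four branching points so that the analysis collapses to that of Theorem~\ref{thm:infinite} and Corollary~\ref{cor:inf}---is exactly the adaptation the paper has in mind (the paper in fact omits the proof and simply points to those two results). However, the specific conditions you impose at the remaining branching points are internally inconsistent, and one of them rests on a notational confusion.

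First the inconsistency. You require $t\nmid B_l$ for $l\ge 5$, yet in the isomorphism step you claim that ``the additional partial sums $B_1+\dots+B_l$ for $l\ge 4$ differ from $B_1+\dots+B_4$ by multiples of $t$''. For $l=5$ this difference is precisely $B_5$, so you would need $t\mid B_5$, contradicting your own hypothesis. The same contradiction occurs at every $l\ge 5$. The endomorphism and isomorphism congruences you wrote down are indexed by the partial sums $S_l=B_1+\dots+B_l$, and the only way to make the conditions for $l\ge 4$ vacuous is to arrange $t\mid S_l$ for those $l$; with $S_4=0$ this forces $t\mid B_5,\,t\mid B_5+B_6,\dots$, i.e.\ effectively $t\mid B_l$ for all $l\ge 5$.

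Second, the condition ``$t\mid B_l+B_{l+1}+B_{l+2}+B_{l+3}$'' is not the right analogue of anything in Section~\ref{sec:4-interlacing}. There the ``sums of four'' $b_i+b_{i+1}+b_{i+2}+b_{i+3}$ are, in your $B$-notation, sums of \emph{two} consecutive $B_l$'s (namely $B_m+B_{m+1}$), because each $B_m$ already packages two $b$'s. Sums of four consecutive $B_l$'s never appear in the endomorphism analysis.

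The clean fix---and this is what the paper's own discussion immediately before the theorem already does for the finite family, with $t\mid B_l$ for $l\ge 4$---is simply to set $B_l=0$ for $l\ge 5$. Then $S_l=0$ for every $l\ge 4$, all the congruences beyond the fourth branching point are trivially satisfied, and both the indecomposability argument (Proposition~\ref{prop:more-indec}) and the obstruction computation of Theorem~\ref{thm:infinite} go through verbatim on $S_1=1$, $S_2=1+\beta$, $S_3=\beta$, yielding $\MM_\beta\cong\MM_{\beta'}$ iff $\beta=\pm\beta'$. With that correction your write-up is essentially the omitted proof.
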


For  given  $r$-interlacing $k$-subsets $I$ and $J$ with  poset $1^{r_1}\mid 2$, where $r\geq r_1> 3$, we note that as 
the number $r_1$ increases the parameterization of non-isomorphic indecomposable rank 2 modules with filtration 
$L_I\mid L_J$ becomes more complicated. In the case $r=r_1=4$, we have seen that the family of non-isomorphic 
indecomposable modules with filtration $L_I\mid L_J$ is parameterized by the set $\mathbb C\setminus {\{0,1,-1\}}$ 
up to sign {(if $\alpha=-\beta$, then $\MM_{\alpha}\cong \MM_{\beta}$)}. 
Here, we do not pursue the classification of these 
non-isomorphic indecomposable modules, but it would be nice to have this sort of classification for general $r\geq r_1$.


\bibliographystyle{abbrv}
\bibliography{biblio}

\begin{thebibliography}{10}

\bibitem{BB}
K.~Baur and D.~Bogdanic.
\newblock Extensions between {C}ohen--{M}acaulay modules of {G}rassmannian
  cluster categories.
\newblock {\em Journal of Algebraic Combinatorics}, pages 1--36, 2016.

\bibitem{BBGE}
K.~Baur, D.~Bogdanic, and A.~G. Elsener.
\newblock Cluster categories from {G}rassmannians and root combinatorics.
\newblock {\em Nagoya Mathematical Journal}, pages 1--33, 2019.

\bibitem{BBGEL20}
K.~Baur, D.~Bogdanic, A.~G. Elsener, and J.-R. Li.
\newblock Rigid indecomposable modules in {G}rassmannian cluster categories.
\newblock {\em arXiv:2011.09227}, 2020.

\bibitem{BKM16}
K.~Baur, A.~D. King, and B.~R. Marsh.
\newblock Dimer models and cluster categories of {G}rassmannians.
\newblock {\em Proc. Lond. Math. Soc. (3)}, 113(2):213--260, 2016.

\bibitem{DWZ08}
H.~Derksen, J.~Weyman, and A.~Zelevinsky.
\newblock Quivers with potentials and their representations. {I}. {M}utations.
\newblock {\em Selecta Math. (N.S.)}, 14(1):59--119, 2008.

\bibitem{FZ}
S.~Fomin and A.~Zelevinsky.
\newblock Cluster algebras ii: Finite type classification.
\newblock {\em Invent. Math.}, 154:63--121, 2003.

\bibitem{fraser}
C.~Fraser.
\newblock Braid group symmetries of {G}rassmannian cluster algebras.
\newblock {\em Selecta Math. (N.S.)}, 26(2):Paper No. 17, 51 pp., 2020.

\bibitem{rigid}
C.~Geiss, B.~Leclerc, and J.~Schr\"oer.
\newblock Rigid modules over preprojective algebras.
\newblock {\em Invent. Math.}, 165(3):589--632, 2006.

\bibitem{gls}
C.~Geiss, B.~Leclerc, and J.~Schr\"oer.
\newblock Journal of algebraic combinatorics.
\newblock {\em Trends in representation theory of algebras and related topics},
  pages 253--283, 2008.

\bibitem{GLS08}
C.~Geiss, B.~Leclerc, and J.~Schr\"oer.
\newblock Partial flag varieties and preprojective algebras.
\newblock {\em Ann. Inst. Fourier (Grenoble)}, 58(3):825--876, 2008.

\bibitem{gssv}
M.~Gekhtman, M.~Shapiro, A.~Stolin, and A.~Vainshtein.
\newblock Poisson structures compatible with the cluster algebra structure in
  grassmannians.
\newblock {\em Lett. Math. Phys.}, 100(2):139--150, 2012.

\bibitem{HL10}
D.~Hernandez and B.~Leclerc.
\newblock Cluster algebras and quantum affine algebras.
\newblock {\em Duke Math. J.}, 154(2):265--341, 2010.

\bibitem{JKS16}
B.~T. Jensen, A.~D. King, and X.~Su.
\newblock A categorification of {G}rassmannian cluster algebras.
\newblock {\em Proc. Lond. Math. Soc. (3)}, 113(2):185--212, 2016.

\bibitem{mr}
B.~R. Marsh and K.~Rietsch.
\newblock The b-model connection and mirror symmetry for {G}rassmannians.
\newblock {\em Adv. Math.}, 366(107027):131p, 2020.

\bibitem{MuS}
G.~Muller and D.~Speyer.
\newblock Cluster algebras of {G}rassmannians are locally acyclic.
\newblock {\em Proc. Amer. Math. Soc.}, 144(8):3267--3281, 2016.

\bibitem{Scott06}
J.~Scott.
\newblock Grassmannians and cluster algebras.
\newblock {\em Proc. London Math. Soc. (3)}, 92(2):345--380, 2006.

\bibitem{SSBW}
K.~Serhiyenko, M.~Sherman‐Bennett, and L.~Williams.
\newblock Cluster structures in {S}chubert varieties in the {G}rassmannian.
\newblock {\em Proc. Lond. Math. Soc.}, 119(6):1694--1744, 2019.

\bibitem{SW}
D.~Speyer and L.~Williams.
\newblock The tropical totally positive grassmannian.
\newblock {\em Journal of Algebraic Combinatorics}, 22(2):189--210, 2005.

\end{thebibliography}

\end{document}